\newtheorem{theorem}{Theorem}[section]
\newtheorem{lemma}[theorem]{Lemma}
\newtheorem{proposition}[theorem]{Proposition}
\newtheorem{corollary}[theorem]{Corollary}
\theoremstyle{definition}
\newtheorem{definition}[theorem]{Definition}
\theoremstyle{remark}
\newtheorem{remark}[theorem]{Remark}
\numberwithin{equation}{section}
\DeclareMathAlphabet{\mathpzc}{OT1}{pzc}{m}{it}
\newcommand{\abs}[1]{\left|#1\right|}
\newcommand{\tr}{\textup{tr}}
\newcommand{\Ric}{\textup{Ric}}
\newcommand{\Tr}{\textup{Tr}}
\newcommand{\id}{\textup{id}}
\newcommand{\lot}{\textup{l.o.t.}}
\newcommand{\R}{\mathbb{R}}
\renewcommand{\div}{\,\textup{div}}
\newcommand{\D}[2]{\frac{\partial #1}{\partial #2}}
\begin{document}
\title[]{A Spinorial Perelman's Functional: critical points and gradient flow}
\author{Tsz-Kiu Aaron Chow, Frederick Tsz-Ho Fong}
\email{chowtka@ust.hk}
\email{frederick.fong@ust.hk}
\address{Department of Mathematics, Hong Kong University of Science and Technology, Clear Water Bay, Kowloon, Hong Kong SAR.}
\date{5 January, 2026}
\maketitle

\begin{abstract}
In this article, we introduce an energy functional on closed Riemannian spin manifolds which unifies Perelman's $\mathcal{W}$- and $\mathcal{F}$-functionals, Baldauf-Ouzch's $\mathcal{E}$-functional, and the Dirichlet energy for spinors. We compute its first variation formula, and show that its critical points under natural constraints are twisted Ricci solitons and eigen-spinors of the weighted Dirac operator. We introduce a negative $L^2$-gradient flow of such a functional, and establish its short-time existence and uniqueness via contraction mapping methods.
\end{abstract}

\section{Introduction}
\subsection{Background and motivations}
Perelman's $\mathcal{F}$- and $\mathcal{W}$-functionals were introduced in his first preprint \cite{Perelman} on the proof of the Poincar\'e conjecture. These functionals play a central role in the analysis of singularities in the Ricci flow. Under suitable normalization constraints, their minimizers - such as the $\lambda$- and $\mu$-functionals - are monotone increasing along the flow and remain constant precisely when the metric is Ricci-flat, Einstein, or a Ricci soliton.

As explained in Remark 1.3 of \cite{Perelman}, Perelman's functionals are closely related to the Weitzenb\"ock (or  Bochner-Lichnerowicz) formula in the context of spin geometry. To further elaborate on this relation, we define the following quantities. Given a smooth scalar function $f : M \to \R$ on a Riemannian spin manifold $(M^n, g)$, for any tensor field $\alpha$ and scalar function $\varphi$, we define:
\begin{align*}
\div_f \alpha & = e^f \div(e^{-f}\alpha) = \div(\alpha) - i_{\nabla f}\alpha\\
D_f\psi & = e^{f/2} e_i \cdot \nabla_{e_i}(e^{-f/2}\psi) = D\psi - \frac{1}{2}\nabla f \cdot \psi\\
\Delta_f \varphi & = \div_f \nabla\varphi = \Delta\varphi - \langle \nabla f, \nabla\varphi\rangle\\
\Delta_f \psi & = -\nabla^*\nabla\psi - \nabla_{\nabla f}\psi \\
\Ric_f & = \Ric + \nabla\nabla f\\
R_f  & = R + 2\Delta f - \abs{\nabla f}^2 = R + \abs{\nabla f}^2 - 2\Delta_f f.
\end{align*}
Here we use $\nabla$ to denote both the Levi-Civita connection induced by $g$, and the spin connection compatible with $g$. Using this notation, we can express Perelman's $\mathcal{F}$-functional as
\[\mathcal{F}(g,f) = \int_M R_f e^{-f}\,d\mu_g.\]
By the Weitzenb\"ock's formula:
\[D_f^2 \psi = -\Delta_f \psi + \frac{1}{4}R_f \psi,\]
one can then use integration by parts to show:
\begin{align}
\label{eq:BO-function}
& \int_M 4\abs{D_f\psi}^2 e^{-f}\,d\mu_g - \mathcal{F}(g,f) \\
& = \int_M \left\{4\textup{Re}\langle D_f^2\psi, \psi\rangle - R_f\right\} e^{-f}\,d\mu_g \nonumber \\
& = \int_M \left\{4\abs{\nabla\psi}_g^2 + R_f\big(\abs{\psi}^2 - 1\big) \right\}e^{-f}\,d\mu_g \nonumber,
\end{align}
Hence, by considering the minimizing constraints below (with $g$ and $f$ fixed), we get
\begin{align*}
& \inf_{\int_M \abs{\psi}^2 e^{-f}\,d\mu_g = 1}\int_M 4\abs{D_f\psi}^2 e^{-f}\,d\mu_g - \mathcal{F}(g,f) \\
& = \inf_{\int_M \abs{\psi}^2 e^{-f}\,d\mu_g = 1}\int_M \left\{4\abs{\nabla\psi}_g^2 + R_f\big(\abs{\psi}^2 - 1\big) \right\}e^{-f}\,d\mu_g
\end{align*}
Recall that $D_f\psi = e^{f/2} D(e^{-f/2}\psi)$, so by the change of variables $\psi \mapsto e^{-f/2}\psi$ one can get
\[\inf_{\int_M \abs{\psi}^2 e^{-f}d\mu_g = }\int_M 4\abs{D_f\psi}^2 e^{-f}d\mu_g = \inf_{\int_M\abs{\psi}^2\,d\mu_g = 1}\int_M 4\abs{D\psi}^2\,d\Omega_0 = 4\lambda_1(D)^2\]
where $\lambda_1(D)$ is the lowest eigenvalue of the Dirac operator $D$, and so we have
\[4\lambda_1(D)^2 - \mathcal{F}(g,f) = \inf_{\int_M \abs{\psi}^2 e^{-f}\,d\mu_g = 1}\int_M \left\{4\abs{\nabla\psi}_g^2 + R_f\big(\abs{\psi}^2 - 1\big) \right\}e^{-f}\,d\mu_g.\]
Note that $\lambda_1(D)$ is independent of $f$, so by taking the maximizer over the following constraint on $f$ (with $g$ fixed), we get:
\[4\lambda_1(D)^2 - \lambda(g) = \sup_{\int_M e^{-f}\,d\mu_g = 1}\inf_{\int_M \abs{\psi}^2 e^{-f}\,d\mu_g = 1}\int_M \left\{4\abs{\nabla\psi}_g^2 + R_f\big(\abs{\psi}^2 - 1\big) \right\}e^{-f}\,d\mu_g. \]
where $\lambda(g)$ is Perelman's $\lambda$-functional:
\[\lambda(g) := \inf_{\int_M e^{-f}\,d\mu_g} \int_M R_f e^{-f}\,d\mu_g.\]
It has been shown by Perelman in \cite{Perelman} that the minimizer is achieved by a function $\hat{f}$ such that $R_{\hat{f}} = \lambda(g)$, and so we have:
\begin{align*}
4\lambda_1(D)^2 - \lambda(g) & = \inf_{\int_M\abs{\psi}^2 e^{-\hat{f}}d\mu_g = 1} \int_M \left\{4\abs{\nabla\psi}^2_g + \lambda(g) \big(\abs{\psi}^2-1\big)\right\} e^{-\hat{f}}\,d\mu_g\\
&  = \inf_{\int_M\abs{\psi}^2 e^{-\hat{f}}d\mu_g = 1}\int_M 4\abs{\nabla\psi}_g^2\,d\mu_g = -4\lambda_1(\Delta_f)
\end{align*}
where $\lambda_1(\Delta_f)$ is the lowest eigenvalue of $\Delta_f$ when acting on spinors.
This shows Perelman's $\lambda$-functional can be alternatively written the difference between the eigenvalues of two operators.
\[\lambda(g) = 4\lambda_1(D)^2 - 4\lambda_1(\Delta_f).\]
In \cite{BO23}, Baldauf-Ozuch introduced the following functional:
\[\mathcal{E}(g,f,\psi) := \int_M \left\{4\abs{\nabla\psi}_g^2 + R_f\big(\abs{\psi}^2 - 1\big) \right\}e^{-f}\,d\mu_g\]
which is the RHS of \eqref{eq:BO-function}. Applying this functional on asymptotically Euclidean manifolds with non-negative $R_f$, Baldauf-Ozuch established the monotonicity of the min-max of $\mathcal{E}$ and the constancy of the ADM mass along the Ricci flow on such a manifold.

Another important functional introduced by Perelman is the $\mathcal{W}$-functional, defined as
\begin{align*}
\mathcal{W}(g,f,\tau) & := \frac{1}{(4\pi\tau)^{n/2}}\int_M \left\{\tau \big(R+\abs{\nabla f}^2\big) + f - n\right\}e^{-f}\,d\mu_g\\
& = \frac{1}{(4\pi\tau)^{n/2}}\int_M (\tau R_f + f - n)e^{-f}\,d\mu_g.
\end{align*}
Under the constraint $\int_M e^{-f}\,d\mu_g = (4\pi\tau)^{n/2}$, the critical points of $\mathcal{W}$ are shrinking Ricci solitons which model finite-time singularities.

Inspired by previous discussions about Perelman's $\mathcal{F}$- and $\lambda$-functionals, and Baldauf-Ozuch's $\mathcal{E}$-functional, we introduce the following $\mathbb{W}_\lambda$-functional which unifies these known functionals.
\begin{definition}
On a closed Riemannian spin manifold $(M^n, g)$, we define the spinorial Perelman's entropy $\mathbb{W}_\lambda$ as follows:
\begin{equation}
\label{def:generalform}
\mathbb{W}_\lambda(g,f,\psi,\tau) := \frac{1}{(4\pi\tau)^{n/2}}\int_M \left\{4\abs{\nabla\psi}^2_g + R_f\big(\abs{\psi}^2-\tau\big) - \lambda(f-n)\right\}e^{-f}\,d\mu_g
\end{equation}
where $g$ is any Riemannian metric on $M$, $f : M \to \R$ is any smooth function, $\psi$ is a spinor field, $\tau > 0$ is a positive scalar function of $t$, and $\lambda \in \R$ is a fixed constant.
\end{definition}

\subsection{Relation with Dirichlet spinorial energy}
When $\lambda = 0$, $\hat{f}$ is a function such that $R_{\hat{f}} = C$ which is a constant, and $\psi_0$ is a spinor with
\[\int_M e^{-\hat{f}}\,d\mu_g = (4\pi\tau)^{n/2} \quad \text{ and } \quad \int_M \abs{\psi_0}^2 e^{-\hat{f}}\,d\mu_g = c(4\pi\tau)^{n/2},\]
then $\mathbb{W}_\lambda(g,\hat{f},\hat{\psi},\tau)$ is essentially the Dirichlet spinor energy:
\[\mathbb{W}_\lambda(g,\hat{f},\hat{\psi},\tau) = \frac{1}{(4\pi\tau)^{n/2}}\int_M 4\abs{\nabla\psi}_g^2 \,d\mu_g + C(c-\tau).\]

\subsection{Relation with Perelman's $\mathcal{W}$- and $\mathcal{F}$-functionals}
The $f$-weighted Weitzenb\"ock's formula
\begin{equation}
\label{eq:f-Weitzenbock}
D_f^2\psi = -\Delta_f \psi + \frac{1}{4}R_f\psi	
\end{equation}
implies that
\begin{equation}
\label{eq:f-Weitzenbock-with-norm}
2\Delta_f \abs{\psi}^2  =   R_f \abs{\psi}^2  + 4\abs{\nabla\psi}^2 - 4\textup{Re} \langle D_f^2 \psi ,\psi\rangle.	
\end{equation}

Since $M$ is closed, we have $\displaystyle{\int_M (\Delta_f \varphi)e^{-f}\,d\mu_g = 0}$ for any scalar function $\varphi$, and \eqref{def:generalform} becomes
\begin{align*}
\mathbb{W}_\lambda(g, f, \psi,\tau) & = \frac{1}{(4\pi\tau)^{n/2}}\int_M \left\{4\textup{Re} \langle D_f^2 \psi ,\psi\rangle + 2\Delta_f\abs{\psi}^2 - \tau R_f - \lambda (f-n)\right\}e^{-f}\,d\mu_g\\
& = \frac{1}{(4\pi\tau)^{n/2}}\int_M \left\{4\abs{D_f\psi}^2 - \tau \big(R+\abs{\nabla f}_g^2\big) - \lambda (f-n)\right\} e^{-f}\,d\mu_g.
\end{align*}
A spinor $\hat\psi \in \ker D_f$ is called a \emph{$f$-weighted harmonic spinor}. When we restrict $\mathbb{W}_\lambda$ on $f$-weighted harmonic spinors, we can recover Perelman's $\mathcal{W}$-functional or $\mathcal{F}$-functional, depending on the choice of $\lambda$:
\begin{align*}
\mathbb{W}_\lambda(g,f,\hat\psi,\tau) & = -\frac{1}{(4\pi\tau)^{n/2}}\int_M \left\{\tau\big(R + \abs{\nabla f}_g^2\big) + \lambda(f - n)\right\}e^{-f}\,d\mu_g\\
& = \begin{cases}
-\mathcal{W}(g,f,\tau) & \text{ if } \lambda = 1\\
-\frac{\tau}{(4\pi\tau)^{n/2}}\mathcal{F}(g,f) & \text{ if } \lambda = 0
\end{cases}.
\end{align*}

\subsection{Relation with Baldauf-Ozuch's functional}
When $\lambda = 0$, and $\tau \equiv 1$, \eqref{def:generalform} becomes:
\[\mathbb{W}_\lambda(g,f,\psi,1) = \frac{1}{(4\pi)^{n/2}}\int_M \left\{4\abs{\nabla\psi}_g^2 + R_f\big(\abs{\psi}^2-1\big)\right\}e^{-f}\,d\mu_g = \frac{1}{(4\pi)^{n/2}}\mathcal{E}_g(f,\psi)\]
which is (up to a constant multiple) the spinor energy functional studied in Baldauf-Ozuch's work \cite{BO23}.

\subsection{Relation with Perelman's $\mu$- and $\lambda$-functionals}
\label{subsect:Perelman}
For simplicity, we denote
\[d\Omega_f = \frac{1}{(4\pi\tau)^{n/2}}e^{-f}d\mu_g.\]
We then consider the constraint over all spinors $\psi$ with fixed $L^2(d\Omega_f)$-norm.
\[\inf_{\int_M \abs{\psi}^2 d\Omega_f = c} \mathbb{W}_\lambda(g,f,\psi,\tau) = \inf_{\int_M \abs{\psi}^2 d\Omega_f = c}\int_M 4\abs{D_f\psi}^2 d\Omega_f - \int_M \left\{\tau(R + \abs{\nabla f}^2_g) + \lambda(f-n)\right\}\,d\Omega_f,\]
where the infimum is taken with $(g,f,\tau)$ being fixed, and $c > 0$ is a constant. 

As discussed, we have
\[\inf_{\int_M \abs{\psi}^2 d\Omega_f = c}\int_M 4\abs{D_f\psi}^2 d\Omega_f = \int_{\int_M \abs{\psi}^2\,d\Omega_0 = c} \int_M 4\abs{D\psi}^2 d\Omega_0 = 4c\lambda_1(D)^2,\]
so by further taking the supremum over the constraint $\int_M d\Omega_f = 1$, we get
\[\sup_{\int_M d\Omega_f = 1} \inf_{\int_M \abs{\psi}^2 d\Omega_f = c} \mathbb{W}_\lambda(g,f,\psi,\tau) = 
\begin{cases}
4c\lambda_1(D)^2 - \mu(g,\tau) & \text{ if } \lambda = 1\\
4c\lambda_1(D)^2 - \tau(4\pi\tau)^{n/2}\lambda(g) & \text{ if } \lambda = 0
\end{cases}
\]
It was proved by Perelman in \cite{Perelman} that the minimizers $\hat{f}$ for the $\mu$- and $\lambda$-functionals exists, and the following holds:
\[\tau R_{\hat{f}} + \lambda(\hat{f}-n) =
\begin{cases}
\mu(g,\tau) & \text{ if } \lambda = 1\\
\tau\lambda(g) & \text{ if } \lambda = 0.
\end{cases}
\]
Therefore, we have
\begin{equation}
\label{eq:Friedrich}
\sup_{\int_M d\Omega_f = 1} \inf_{\int_M \abs{\psi}^2 d\Omega_f = c} \mathbb{W}_\lambda(g,f,\psi,\tau) = 
\begin{cases}
4c\lambda_1(D)^2 - \tau R_f - f + n & \text{ if } \lambda = 1\\
4c\lambda_1(D)^2 - \tau(4\pi\tau)^{n/2}R_f & \text{ if } \lambda = 0
\end{cases}	
\end{equation}
The well-known Friedrich's inequality \cite{Frd80} and its $f$-weighted generalization in \cite{BO22} said that on a closed spin manifold $(M^n,g)$ and $f \in C^\infty(M,\R)$, then
\[\lambda_1(D)^2 \geq \frac{n}{4(n-1)}\min R_f,\]
with equality holds if and only if $f$ is a constant and $(M^n,g)$ admits a Killing spinor, and hence $(M^n,g)$ is Einstein. From \eqref{eq:Friedrich} and the Friedrich's inequality, if one could smoothly deform the data $(g,f,\psi)$ with some suitable choice of constraints ($\tau, c$) and under a suitable flow such that $\mathbb{W}_0$ decreases to $0$ as $t \to \infty$, then the limit metric $g_\infty$ should be an Einstein metric. It would also be interesting to discover whether or not there is any generalization of Friedrich's inequality so that the equality holds exactly when $(M^n,g,f)$ is a gradient Ricci soliton. We will show in later section that the constraint critical point of $\mathbb{W}_1$ is achieved by ``twisted'' Ricci solitons.  The case $\lambda = 1$ in \eqref{eq:Friedrich} seems to suggest a possible form of generalized Friedrich's inequality with equality case achieved by Ricci solitons.

\subsection{Gradient flow}
Inspired by \eqref{eq:Friedrich} and the potential relations with Friedrich's inequality, we will introduce a flow system on $(g,f,\psi)$ which preserves the constraints
\[\int_M d\Omega_f = 1 \quad \text{ and } \quad \abs{\psi}^2 = c.\]
The flow system is of the form:
\begin{align*}
\D{g}{t} & = -2\left(\Ric  + \mathcal{L}_{\frac{1}{2}\nabla f  - \frac{1}{\tau}V_f}g - \frac{2}{\tau}S_{g,f,\psi} - \frac{\lambda}{2\tau}g\right)\\
\D{f}{t} & =  - \Delta f - R + \frac{\lambda n}{2\tau} +\frac{4}{\tau} \Tr_g S + \frac{2}{\tau}\div\,V_f \\
\D{\psi}{t} & =  \Delta_f \psi +  \frac{|\nabla\psi|^2}{|\psi|^2}\psi  
\end{align*}
where the vector field $V_f$ and the symmetric $2$-tensor $S_{g,f,\psi}$ will be defined in later sections. We will establish the short-time existence (Theorem \ref{thm;short-time-existence}) of this flow. The $\mathbb{W}_\lambda$-functional will be shown to be monotone decreasing along the flow (Proposition \ref{lma:monotonicity_A}), and it is stationary on ``twisted'' Ricci solitons. Therefore, in some sense, the flow system is the negative $L^2$-gradient flow of the $\mathbb{W}_\lambda$-function. 

To establish the short-time existence, we follow a DeTurck-trick type strategy adapted to the coupled
$(g,f,\psi)$-system. More precisely, in Section~4 we introduce a gauged system by adding the DeTurck
vector field $X = W(g,g_0) - \frac{2}{\tau}V_f + \nabla f$ to the metric equation and compensating the induced diffeomorphism action in
the $(f,\psi)$-equations by the corresponding transport terms (including the spinorial Kosmann Lie
derivative). A key analytic feature of this gauge is that it puts the system into a \emph{triangular
parabolic form} at the level of highest-order derivatives: the metric equation becomes strictly parabolic
with a Laplace-type principal part in $g$ and contains no second derivatives of $f$ or $\psi$; the spinor
equation is strictly parabolic in $\psi$, while the additional top-order terms involve at most second
derivatives of $g$ (coming from the Kosmann correction) but only first derivatives of $\psi$; and the scalar
equation is (forward/backward) parabolic in $f$ depending on whether $c>\tau$ or $c<\tau$.

In Section~4.1 we compute the principal symbols and verify that, under the pointwise normalization
$|\psi_0|^2\equiv c$, the $(g,\psi)$-subsystem is uniformly parabolic, whereas the $f$-equation is uniformly
forward parabolic if $c>\tau$ and uniformly backward parabolic if $c<\tau$. In Section~4.2 we exploit the
triangular structure to solve the gauged system by a Banach fixed point argument in parabolic H\"older
spaces: we solve successively for $g$, then $\psi$, and then $f$, applying parabolic Schauder estimates at
each step and using a short-time interpolation estimate to gain a small factor in the difference
bounds and obtain a contraction for sufficiently small time. Finally, we undo the gauge by pulling back
along the time-dependent diffeomorphisms generated by the gauge vector field, which yields a solution
to the original flow \eqref{eq:coupled_dg}--\eqref{eq:coupled_dpsi}.

The article is structured as follows. We will derive the first variation formulae of $\mathbb{W}_\lambda$ in Section 2 using some of the formulae derived in \cite{Perelman} and \cite{BO22,BO23}. In Section 3, we will derive the Euler-Lagrange's equations under some natural constraints that will inspire us to consider the spinorial Ricci flow. In Section 4, we will prove the short-time existence and uniqueness of the spinorial Ricci flow.

\medskip

\noindent\textbf{Acknowledgement.} A.Chow is partially supported by the Croucher Foundation Start-up Grant and the HKUST New Faculty Start-up Grant. 
F.Fong is partially supported by the General Research Fund \#16305625 by the Hong Kong Research Grants Council. The authors would like to thank Jingbo Wan for helpful discussions.

\section{Variation Formulae}
\subsection{Evolution equations}
In this section, we first list the evolution equations of various important quantities, and then apply them to find out the first variation of the $\mathbb{W}_\lambda$-functional.
Suppose $g(t)$, $f(t)$, $\tau(t)$, and $\psi(t)$ are $1$-parameter family of Riemannian metrics, scalar functions, and spinors on $M$. We denote $\dot{g}$, $\dot{f}$, $\dot\tau$, $\dot\psi$ their derivatives with respect to $t$. 
\begin{lemma}[c.f. \cite{AWW}, Propositions 2.6, 2.25 of \cite{BO23}] 
\begin{align}
\label{ddt_|grad psi|^2}
\D{}{t}\abs{\nabla\psi}_g^2 & = - \big\langle \dot{g}, \langle \nabla\psi \otimes \nabla\psi\rangle\big\rangle + \frac{1}{2}\langle \nabla\dot{g}, T_\psi\rangle + 2\textup{Re}\langle \nabla\dot\psi, \nabla\psi\rangle\\
\D{}{t}R_f & = \div_f \div_f \dot{g} - 2\Delta_f\left(\frac{1}{2}\tr_g\dot{g} - \dot{f}\right) - \langle \dot{g},\Ric_f\rangle\\
\D{}{t}\abs{\psi}^2 & = 2\textup{Re}\langle \psi,\dot\psi\rangle\\
\D{}{t}\frac{1}{(4\pi\tau)^{n/2}}(e^{-f}\,d\mu_g) & = \frac{1}{(4\pi\tau)^{n/2}}\left(\frac{1}{2}\tr_g \dot{g} - \dot{f} - \frac{n\dot\tau}{2\tau}\right)e^{-f}\,d\mu_g
\end{align}
where $T_\psi$ and $\langle \nabla\psi \otimes \nabla\psi\rangle$ are respectively the $3$- and $2$-tensors defined by:
\begin{align}
T_\psi(X,Y,Z) & = \frac{1}{2}\textup{Re}\left((X \wedge Y) \cdot \psi, \nabla_Z \psi\rangle + \langle (X \wedge Z) \cdot \psi, \nabla_Y \psi\rangle \right) \label{eq:T_psi}\\
\langle \nabla\psi \otimes \nabla\psi\rangle(X,Y) & = \textup{Re} \langle \nabla_X \psi, \nabla_Y \psi\rangle \label{eq:nabla_psi^2}
\end{align}
for any vector fields $X, Y, Z$.
\end{lemma}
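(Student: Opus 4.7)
The plan is to treat the four identities individually, since they differ in character: three are essentially direct applications of well-known first variation formulas, while the identity for $\abs{\nabla\psi}^2$ carries the genuine spinorial content. For all four, I will exploit the Bourguignon--Gauduchon identification of the spinor bundles $\Sigma_{g(t)}$ with a fixed reference bundle over $(M,g_0)$, under which the Hermitian pairing on $\Sigma$ is pointwise $t$-independent.

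I would first dispose of $\D{}{t}\abs{\psi}^2 = 2\textup{Re}\langle\psi,\dot\psi\rangle$, which is immediate from the above trivialization. The evolution of $(4\pi\tau)^{-n/2}e^{-f}\,d\mu_g$ follows from the product rule, combining Jacobi's formula $\D{}{t}d\mu_g = \tfrac{1}{2}(\tr_g \dot g)\,d\mu_g$ with the elementary derivatives $\D{}{t}e^{-f} = -\dot f e^{-f}$ and $\D{}{t}(4\pi\tau)^{-n/2} = -\tfrac{n\dot\tau}{2\tau}(4\pi\tau)^{-n/2}$. For $\D{}{t}R_f$, I would begin from the decomposition $R_f = R + 2\Delta f - \abs{\nabla f}^2$ and apply Perelman's classical formula for $\D{}{t}R$ together with the standard variations of $\Delta_g f$ and $\abs{\nabla f}_g^2$. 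After regrouping the resulting divergences and Hessians using $\div_f = \div - i_{\nabla f}$, $\Delta_f = \Delta - \nabla_{\nabla f}$, and $\Ric_f = \Ric + \nabla\nabla f$, the result collapses to the stated compact form. These three steps are standard bookkeeping and can be referenced to \cite{AWW, BO23}.

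The substantial identity is $\D{}{t}\abs{\nabla\psi}_g^2$, which requires the variation of the spin connection itself. I would write the $g(t)$-orthonormal frame as the image of a fixed $g_0$-orthonormal frame under the symmetric endomorphism $B(t)$ with $B(t)^2 = g_0^{-1}g(t)$; the Bourguignon--Gauduchon lift of $B(t)$ then trivializes $\Sigma_{g(t)}$, making $\nabla^{g(t)}$ into a time-dependent connection on a fixed bundle. A direct Clifford-algebraic computation shows that $\D{}{t}(\nabla_X \psi) - \nabla_X \dot\psi$ is a linear combination of a term proportional to $\dot g(X,e_i)\nabla_{e_i}\psi$ and a term involving the Clifford quadratic $(\nabla_X \dot g)(e_i,e_j)\,e_i\cdot e_j \cdot\psi$. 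Pairing with $\nabla\psi$ under the time-varying metric then produces exactly three contributions: the variation of the raising $g^{ij}$ in $\abs{\nabla\psi}_g^2 = g^{ij}\langle \nabla_{e_i}\psi,\nabla_{e_j}\psi\rangle$ yields $-\langle\dot g,\langle\nabla\psi\otimes\nabla\psi\rangle\rangle$; the variation of the spin connection, after the symmetrization in $(Y,Z)$ built into the definition of $T_\psi$ in \eqref{eq:T_psi}, yields $\tfrac{1}{2}\langle\nabla\dot g, T_\psi\rangle$; and the variation of $\psi$ itself yields $2\textup{Re}\langle\nabla\dot\psi,\nabla\psi\rangle$.

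The main obstacle is the Clifford-algebraic bookkeeping in the spin-connection term: one must track the antisymmetric product $e_i\cdot e_j$ against the symmetric pairing of $\nabla\psi$ in the two free slots of $T_\psi$, using Bianchi-type identities to absorb potential antisymmetric-in-$\nabla\dot g$ pieces and checking that no zeroth-order Clifford contribution survives. I would follow the derivation of Proposition 2.6 of \cite{BO23} verbatim for this step, after verifying that our sign conventions for $\cdot$, $\wedge$, and the Kosmann-type spinorial Lie derivative match theirs; the remainder of the argument is a direct application of the product rule.
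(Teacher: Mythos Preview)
Your proposal is correct and is in fact more detailed than what the paper does: the paper simply states this lemma with a citation to \cite{AWW} and Propositions 2.6, 2.25 of \cite{BO23}, and provides no proof of its own. Your sketch accurately reconstructs the argument from those references, including the Bourguignon--Gauduchon trivialization and the identification of the three contributions to $\D{}{t}\abs{\nabla\psi}_g^2$; the only caveat is that the variation of $g^{ij}$ and the first-order part of the spin-connection variation both contribute to the $-\langle\dot g,\langle\nabla\psi\otimes\nabla\psi\rangle\rangle$ term (not just the former), so when you do the Clifford bookkeeping you should check that the diagonal piece of $e_i\cdot e_j$ is correctly absorbed there rather than into $T_\psi$.
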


Consequently, the integrals below satisfy the following evolution equations.
\begin{lemma}[c.f. \cite{AWW}, Propositions 2.20 and 2.25 of \cite{BO23}]
\label{lma:integral_variations}
Assume $\partial M = \emptyset$, we have
\begin{align}
\label{eq:ev_R_f} & \frac{d}{dt}\frac{1}{(4\pi\tau)^{n/2}}\int_M R_f e^{-f}\,d\mu_g\\
 & = \frac{1}{(4\pi\tau)^{n/2}}\int_M \left\{\left(\frac{1}{2}\tr_g(\dot{g}) - \dot{f} - \frac{n\dot\tau}{2\tau}\right)R_f -\big\langle \dot{g}, \Ric_f \big\rangle\right\} e^{-f}\,d\mu_g \nonumber\\
\label{eq:ev_R_f|psi|^2} & \frac{d}{dt}\frac{1}{(4\pi\tau)^{n/2}}\int_M R_f \abs{\psi}^2 e^{-f}\,d\mu_g\\
& = \frac{1}{(4\pi\tau)^{n/2}}\int_M \left\{-\big\langle \dot{g}, \Ric_f \abs{\psi}^2\big\rangle + \abs{\psi}^2 \div_f\div_f \dot{g} + 2\textup{Re}\langle \dot\psi, R_f\psi\rangle\right. \nonumber\\
& \hskip 3cm + \left.4\left(\frac{1}{2}\tr_g\dot{g} - \dot{f} - \frac{n\dot\tau}{2\tau}\right)\left(\textup{Re}\langle D_f^2\psi,\psi\rangle - \abs{\nabla\psi}^2_g\right)\right\} e^{-f}\,d\mu_g\nonumber\\
\label{eq:ev_|nabla_psi|^2} & \frac{d}{dt}\frac{1}{(4\pi\tau)^{n/2}}\int_M \abs{\nabla\psi}_g^2 e^{-f}\,d\mu_g \\
& = \frac{1}{(4\pi\tau)^{n/2}}\int_M \left\{\left(\frac{1}{2}\tr_g\dot{g} - \dot{f} - \frac{n\dot\tau}{2\tau}\right) \abs{\nabla\psi}^2 - 2\textup{Re} \langle\dot\psi, \Delta_f\psi\rangle\right. \nonumber\\
& \hskip 3cm - \left.\left\langle \dot{g}, \frac{1}{2}\div_f T_\psi + \langle \nabla\psi \otimes \nabla\psi\rangle\right\rangle \right\}e^{-f}\,d\mu_g\nonumber
\end{align}
\end{lemma}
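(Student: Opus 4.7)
The plan is to differentiate each integral by the product rule, substitute the pointwise evolution equations from the preceding lemma, and reduce the resulting expressions using two basic tools. First, on a closed manifold one has $\int_M \div_f\alpha\cdot e^{-f}\,d\mu_g = 0$ (since $(\div_f\alpha)e^{-f} = \div(e^{-f}\alpha)$) and hence $\int_M \Delta_f\varphi\cdot e^{-f}\,d\mu_g = 0$; moreover, $\Delta_f$ is self-adjoint with respect to $e^{-f}\,d\mu_g$. Second, the $f$-weighted Weitzenb\"ock identity \eqref{eq:f-Weitzenbock-with-norm} rewrites $\Delta_f|\psi|^2$ in terms of $R_f|\psi|^2$, $|\nabla\psi|^2$, and $\textup{Re}\langle D_f^2\psi,\psi\rangle$. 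Throughout, write $d\Omega_f := (4\pi\tau)^{-n/2}e^{-f}\,d\mu_g$ and $\eta_2 := \tfrac{1}{2}\tr_g\dot{g} - \dot{f} - \tfrac{n\dot\tau}{2\tau}$, so that $\partial_t d\Omega_f = \eta_2\,d\Omega_f$. For \eqref{eq:ev_R_f}, substituting the pointwise $\partial_t R_f = \div_f\div_f\dot{g} - 2\Delta_f(\tfrac{1}{2}\tr_g\dot{g} - \dot{f}) - \langle\dot{g},\Ric_f\rangle$ kills the first two terms against $d\Omega_f$, leaving exactly the stated formula.

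For \eqref{eq:ev_R_f|psi|^2}, the product rule produces $(\partial_t R_f)|\psi|^2$, $R_f\,\partial_t|\psi|^2 = 2R_f\,\textup{Re}\langle\psi,\dot\psi\rangle$, and the measure contribution $\eta_2 R_f|\psi|^2$. The $|\psi|^2\div_f\div_f\dot{g}$ and $-\langle\dot{g},\Ric_f\rangle|\psi|^2$ pieces are kept as in the statement. For the Laplacian piece $-2\Delta_f(\tfrac{1}{2}\tr_g\dot{g} - \dot{f})|\psi|^2$, the key observation is that $\tfrac{n\dot\tau}{2\tau}$ depends only on $t$, so $\Delta_f(\tfrac{1}{2}\tr_g\dot{g} - \dot{f}) = \Delta_f\eta_2$; self-adjointness of $\Delta_f$ against $d\Omega_f$ then moves the Laplacian onto $|\psi|^2$, and \eqref{eq:f-Weitzenbock-with-norm} gives
\[-2\eta_2\,\Delta_f|\psi|^2 = -\eta_2 R_f|\psi|^2 + 4\eta_2\bigl(\textup{Re}\langle D_f^2\psi,\psi\rangle - |\nabla\psi|^2\bigr).\]
The $-\eta_2 R_f|\psi|^2$ term cancels the $+\eta_2 R_f|\psi|^2$ from the measure evolution, and the residual $4\eta_2(\textup{Re}\langle D_f^2\psi,\psi\rangle - |\nabla\psi|^2)$ is precisely the term in the statement.

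For \eqref{eq:ev_|nabla_psi|^2}, I substitute the pointwise evolution of $|\nabla\psi|^2$ and integrate by parts the two terms involving $\nabla\dot{g}$ and $\nabla\dot\psi$ against $d\Omega_f$. For $\tfrac{1}{2}\langle\nabla\dot{g},T_\psi\rangle$, the identity $\div(e^{-f}T_\psi) = e^{-f}\div_f T_\psi$ immediately produces $-\tfrac{1}{2}\langle\dot{g},\div_f T_\psi\rangle$. For $2\,\textup{Re}\langle\nabla\dot\psi,\nabla\psi\rangle$, the 1-form $\omega$ defined by $\omega(X) := \textup{Re}\langle\dot\psi,\nabla_X\psi\rangle$ satisfies $\div_f\omega = \textup{Re}\langle\nabla\dot\psi,\nabla\psi\rangle + \textup{Re}\langle\dot\psi,\Delta_f\psi\rangle$ (using $\Delta_f\psi = -\nabla^*\nabla\psi - \nabla_{\nabla f}\psi$), and its weighted integral vanishes, yielding $-2\,\textup{Re}\langle\dot\psi,\Delta_f\psi\rangle$. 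Combining with the measure term $\eta_2|\nabla\psi|^2$ assembles the stated expression. The only genuine subtlety lies in \eqref{eq:ev_R_f|psi|^2}, where the precise cancellation of the $R_f|\psi|^2$ terms via Weitzenb\"ock must be tracked carefully; the other two formulas reduce to essentially mechanical integration by parts.
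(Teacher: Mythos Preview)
Your proof is correct. The paper itself does not supply a proof of this lemma; it simply cites \cite{AWW} and Propositions~2.20 and~2.25 of \cite{BO23}. Your argument---differentiating under the integral, inserting the pointwise evolution equations, and then integrating by parts against the weighted measure $e^{-f}d\mu_g$ (using self-adjointness of $\Delta_f$ and the $f$-Weitzenb\"ock identity \eqref{eq:f-Weitzenbock-with-norm} to handle the $\Delta_f|\psi|^2$ term in \eqref{eq:ev_R_f|psi|^2})---is exactly the natural derivation one would expect in those references, and all the cancellations you describe check out.
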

Combining these, we can then compute the first variation formula for $\mathbb{W}_\lambda$.

\begin{proposition}
\label{prop:var_W}
Suppose $\partial M = \emptyset$. The first variation of $\mathbb{W}_\lambda$ is given by
\begin{align}
\label{eq:var_W_U}
		& \frac{d}{dt}\mathbb{W}_\lambda(g,f,\psi,\tau)\\
&=  \frac{1}{(4\pi\tau)^{n/2}}\int_M \left\{ \left\langle \tau\dot{g} - \dot{\tau}g,\, \Ric_f - \frac{\lambda}{2\tau}g  \right\rangle - \left\langle \dot{g},\,  \mathcal{L}_{V} g + 2S \right\rangle + 8\textup{Re} \left\langle D_f^2\psi,\, \dot{\psi} \right\rangle\right.\nonumber\\
&\hskip 2.8cm + \left.\left(\frac{1}{2}\tr_g(\dot{g}) - \dot{f} -  \frac{n\dot\tau}{2\tau}\right)\left(4\textup{Re}\langle D_f^2\psi, \psi\rangle-\tau R_f - \lambda(f-n-1)\right)\right\} e^{-f} d\mu\nonumber
	\end{align}
where $V$ is a vector field on $M$ satisfying
\begin{align}
\label{eq:V}
V_{g,f,\psi} & = \sum_{i=1}^n \textup{Re}\langle \psi, e_i \cdot D_f\psi\rangle e_i,
\end{align}
and $S$ is the symmetric $2$-tensor defined by
\begin{equation}
\label{eq:S}
S_{g,f,\psi}(X,Y) := \textup{Re}\langle D_f\psi, X \cdot \nabla_Y \psi + Y \cdot \nabla_X \psi\rangle.
\end{equation}
If $(g,f,\psi)$ are clear from the context, we will abbreviate $V_{g,f,\psi}$ and $S_{g,f,\psi}$ by simply $V$ and $S$ respectively.

\end{proposition}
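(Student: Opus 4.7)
The approach is to decompose $\mathbb{W}_\lambda$ into the four integrals whose variations are supplied by Lemma \ref{lma:integral_variations}, and then reorganize. Concretely, I would write $\mathbb{W}_\lambda = 4\mathcal{A} + \mathcal{B} - \tau\mathcal{C} - \lambda\mathcal{D}$, with
\[
\mathcal{A} = \tfrac{1}{(4\pi\tau)^{n/2}}\int_M \abs{\nabla\psi}_g^2\, e^{-f}\,d\mu_g,\qquad \mathcal{B} = \tfrac{1}{(4\pi\tau)^{n/2}}\int_M R_f\abs{\psi}^2\, e^{-f}\,d\mu_g,
\]
\[
\mathcal{C} = \tfrac{1}{(4\pi\tau)^{n/2}}\int_M R_f\, e^{-f}\,d\mu_g,\qquad \mathcal{D} = \tfrac{1}{(4\pi\tau)^{n/2}}\int_M (f-n)\, e^{-f}\,d\mu_g.
\]
Three of the time derivatives are supplied verbatim by \eqref{eq:ev_R_f}--\eqref{eq:ev_|nabla_psi|^2}. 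The remaining one, $\dot{\mathcal{D}}$, is a single-line computation using the measure variation together with $\frac{d}{dt}(f-n)=\dot f$; the extra $\dot f$ term it contributes is precisely what, after regrouping, shifts $-\lambda(f-n)$ to $-\lambda(f-n-1)$ in the final formula. One also keeps in mind the separate contribution $-\dot\tau\mathcal{C}$ coming from differentiating the coefficient $\tau$ of $\mathcal{C}$.

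Next I would collapse the spinorial contributions. The $\dot\psi$ pieces $-8\,\textup{Re}\langle\dot\psi,\Delta_f\psi\rangle$ from $4\dot{\mathcal{A}}$ and $2\,\textup{Re}\langle\dot\psi,R_f\psi\rangle$ from $\dot{\mathcal{B}}$ combine via the $f$-weighted Weitzenb\"ock formula \eqref{eq:f-Weitzenbock} into the clean $8\,\textup{Re}\langle D_f^2\psi,\dot\psi\rangle$ appearing in the target. For the $\dot g$ pieces, the key preliminary step is to integrate the summand $\abs{\psi}^2\,\div_f\div_f\dot g$ in $\dot{\mathcal{B}}$ twice by parts against the weight $e^{-f}\,d\mu_g$, which converts it to $\langle\dot g,\textup{Hess}(\abs{\psi}^2)\rangle$. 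Expanding
\[
\textup{Hess}(\abs{\psi}^2)_{ij} = 2\,\textup{Re}\langle \nabla_i\nabla_j\psi,\psi\rangle + 2\,\textup{Re}\langle\nabla_i\psi,\nabla_j\psi\rangle,
\]
the second summand partially cancels the $-4\langle\dot g,\langle\nabla\psi\otimes\nabla\psi\rangle\rangle$ piece of $4\dot{\mathcal{A}}$, leaving a Hessian-of-spinor remainder together with $-2\langle\dot g,\div_f T_\psi\rangle$ and $-\langle\dot g,\Ric_f\abs{\psi}^2\rangle$ still to be processed.

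The crux of the proof is to identify this residual spinorial coefficient of $\dot g$ as $-\langle\dot g,\, \mathcal{L}_V g + 2S\rangle$. To execute this I would invoke the spinorial Ricci identity $(\nabla_i\nabla_j - \nabla_j\nabla_i)\psi = \tfrac{1}{4}R_{ijk\ell}\, e_k\cdot e_\ell\cdot\psi$ to replace the spinor commutator by a curvature term: after symmetrizing in $(i,j)$ the curvature contribution is exactly what absorbs the $\Ric_f\abs{\psi}^2$ piece. The Clifford relations $e_i\cdot e_j + e_j\cdot e_i = -2\delta_{ij}$ then let one re-express $e_i\cdot\nabla_j\psi$ in terms of $D_f\psi$ plus off-diagonal spinor pieces, so that $(\mathcal{L}_V g)_{ij} = \nabla_i V_j + \nabla_j V_i$, with $V$ as in \eqref{eq:V}, absorbs exactly the $\textup{Re}\langle\nabla_i\psi,\, e_j\cdot D_f\psi\rangle$-type terms produced by $\div_f T_\psi$, while the tensor $S$ in \eqref{eq:S} collects the remaining symmetric pairings. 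This algebraic bookkeeping is where the work concentrates and is the main obstacle.

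Finally, the purely scalar pieces assemble routinely. Combining $-\dot\tau\mathcal{C} - \tau\dot{\mathcal{C}}$ with the metric and $\dot\tau$ parts of $-\lambda\dot{\mathcal{D}}$ yields the first summand $\langle\tau\dot g-\dot\tau g,\, \Ric_f - \tfrac{\lambda}{2\tau}g\rangle$ of \eqref{eq:var_W_U}, after noting that $-\dot\tau R_f$ can be replaced by $-\dot\tau(R+\Delta f) = -\dot\tau\tr_g\Ric_f$ inside the $e^{-f}d\mu_g$-integral via $\int\Delta_f f\, e^{-f}d\mu_g = 0$. All the residual factors proportional to $(\tfrac{1}{2}\tr_g\dot g - \dot f - \tfrac{n\dot\tau}{2\tau})$ collected across the four components---after applying the Weitzenb\"ock identity once more to substitute $4\abs{\nabla\psi}^2 + R_f\abs{\psi}^2 = 4\,\textup{Re}\langle D_f^2\psi,\psi\rangle + 2\Delta_f\abs{\psi}^2$ and discarding the $\Delta_f$ term under the weighted integral---consolidate into the fourth summand of \eqref{eq:var_W_U}, completing the derivation.
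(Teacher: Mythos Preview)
Your approach is essentially identical to the paper's: the same decomposition $\mathbb{W}_\lambda = 4\mathcal{A} + \mathcal{B} - \tau\mathcal{C} - \lambda\mathcal{D}$, the same use of Lemma \ref{lma:integral_variations} together with the extra $-\dot\tau\mathcal{C}$ term, the same integration by parts converting $\abs{\psi}^2\div_f\div_f\dot g$ into $\langle\dot g,\nabla\nabla\abs{\psi}^2\rangle$, the same Weitzenb\"ock regrouping of the $\dot\psi$-terms, and the same use of $R_f - \tfrac{\lambda n}{2\tau} = \langle g,\Ric_f - \tfrac{\lambda}{2\tau}g\rangle + \Delta_f f$ (which is how the paper packages what you call the replacement of $-\dot\tau R_f$ by $-\dot\tau\tr_g\Ric_f$).

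The only substantive difference is at the ``crux'': the paper does \emph{not} expand $\nabla\nabla\abs{\psi}^2$ and push through the spinorial Ricci identity as you propose. Instead it leaves the combination $-2\div_f T_\psi - 4\langle\nabla\psi\otimes\nabla\psi\rangle - \Ric_f\abs{\psi}^2 + \nabla\nabla\abs{\psi}^2$ intact and then invokes the ready-made identity \eqref{eq:div_fT} (quoted as (2.13) from \cite{BO23}), which immediately collapses that expression to $-\mathcal{L}_V g - 2S$. Your route amounts to re-deriving \eqref{eq:div_fT} from scratch; this is doable but note a small imprecision in your sketch: the spinorial commutator only produces $\Ric$, not $\Ric_f$, so the $\nabla\nabla f$ part of $\Ric_f\abs{\psi}^2$ has to come from the $-i_{\nabla f}T_\psi$ contribution inside $\div_f T_\psi$, not from the curvature identity itself. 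Citing \eqref{eq:div_fT} as the paper does avoids that bookkeeping entirely.
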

\begin{proof}
First, it is useful to know
\begin{equation}
\int_M \abs{\psi}^2 (\div_f\div_f \dot{g}) e^{-f}\,d\mu_g = \int_M \langle \dot{g},\nabla\nabla\abs{\psi}^2 \rangle e^{-f}\,d\mu_g.
\end{equation}
We also need the following evolution equation which can be easily obtained by direct computations:
\begin{align}
\label{eq:ev_f} & \frac{d}{dt}\frac{1}{(4\pi\tau)^{n/2}}\int_M (f-n)e^{-f}\,d\mu_g\\
& = \frac{1}{(4\pi\tau)^{n/2}}\int_M \left\{\dot{f} + \left(\frac{1}{2}\tr_g \dot{g} - \dot{f} - \frac{n\dot\tau}{2\tau}\right)(f-n)\right\}e^{-f}\,d\mu_g. \nonumber
\end{align}
By adding up the above identities to the evolution equations in Lemma \ref{lma:integral_variations}, we obtain
\begin{align}
\label{eq:var_W_1}
& \frac{d}{dt}\mathbb{W}_\lambda(g,f,\psi,\tau) = 4\eqref{eq:ev_|nabla_psi|^2} + \eqref{eq:ev_R_f|psi|^2} - \tau\eqref{eq:ev_R_f}-\lambda\eqref{eq:ev_f} - \frac{\dot\tau}{(4\pi\tau)^{n/2}}\int_M R_f e^{-f}\,d\mu_g\\
& = \frac{1}{(4\pi\tau)^{n/2}}\int_M \left\{\left\langle \dot{g}, -2\div_f T_\psi -4\textup{Re}\langle\nabla\psi\otimes\nabla\psi\rangle  - \Ric_f \abs{\psi}^2 + \nabla\nabla\abs{\psi}^2 + \tau \Ric_f\right\rangle\right. \nonumber\\
& \hskip 3cm + \textup{Re}\langle \dot\psi, -8\Delta_f \psi+ 2R_f\psi\rangle - \dot\tau R_f - \lambda\dot{f}\nonumber\\
& \hskip 3cm + \left.\left(\frac{1}{2}\tr_g(\dot{g}) - \dot{f} - \frac{n\dot\tau}{2\tau}\right)\left(4\textup{Re}\langle D_f^2\psi,\psi\rangle - \tau R_f - \lambda(f-n)\right)\right\}e^{-f}\,d\mu_g\nonumber\\
& = \frac{1}{(4\pi\tau)^{n/2}} \int_M \left\{\left\langle \tau\dot{g}, \Ric_f - \frac{\lambda}{2\tau}g\right\rangle - \dot\tau \left(R_f - \frac{\lambda n}{2\tau}\right)  + 8\textup{Re} \langle \dot\psi, D_f^2\psi\rangle\right. \nonumber\\
& \hskip 3cm + \left\langle \dot{g}, -2\div_f T_\psi -4\textup{Re}\langle\nabla\psi\otimes\nabla\psi\rangle  - \Ric_f \abs{\psi}^2 + \nabla\nabla\abs{\psi}^2\right\rangle \nonumber\\
& \hskip 3cm + \left.\left(\frac{1}{2}\tr_g(\dot{g}) - \dot{f} - \frac{n\dot\tau}{2\tau}\right)\left(4\textup{Re}\langle D_f^2\psi,\psi\rangle - \tau R_f - \lambda(f-n-1)\right)\right\}e^{-f}\,d\mu_g\nonumber
\end{align}
By observing that
\[R_f - \frac{\lambda n}{2\tau} = \tr_g\left(\Ric_f - \frac{\lambda}{2\tau}g\right) + \Delta_f = \left\langle g, \Ric_f - \frac{\lambda}{2\tau}g\right\rangle + \Delta_f f,\]
we have proved:
\begin{align}
\label{eq:var_W}
& \frac{d}{dt}\mathbb{W}_\lambda(g,f,\psi,\tau)\\
&=  \frac{1}{(4\pi\tau)^{n/2}}\int_M \left\{ \left\langle \tau\dot{g} - \dot{\tau}g,\, \Ric_f - \frac{\lambda}{2\tau}g  \right\rangle \right.\nonumber\\
&\hskip 3cm + \left\langle \dot{g},\, -2\div_f T_\psi - 4\textup{Re}\langle\nabla\psi\otimes\nabla\psi\rangle  - \Ric_f \abs{\psi}^2 + \nabla\nabla\abs{\psi}^2  \right\rangle \nonumber\\
&\hskip 3cm -\left(\frac{1}{2}\tr_g(\dot{g}) - \dot{f} -  \frac{n\dot\tau}{2\tau}\right)\left(\tau R_f + \lambda(f-n-1)\right) \nonumber\\
&\hskip 3cm + \left.4\textup{Re} \left\langle D_f^2\psi,\, \left( \frac{1}{2}\tr_g(\dot{g}) - \dot{f} - \frac{n\dot\tau}{2\tau}\right)\psi + 2\dot{\psi} \right\rangle  \right\}  e^{-f} d\mu. \nonumber
\end{align}
Next we simplify the second term in the integrand. The following term in \eqref{eq:var_W}
\[\left\langle \dot{g},\, -2\div_f T_\psi - 4\textup{Re}\langle\nabla\psi\otimes\nabla\psi\rangle  - \Ric_f \abs{\psi}^2 + \nabla\nabla\abs{\psi}^2  \right\rangle\]
is closely related to $D_f\psi$ by the following identity (see (2.13) of \cite{BO23}):
\begin{align}
\label{eq:div_fT}
\div_fT_\psi (X,Y) & =	 -\frac{1}{2}\Ric_f(X,Y)\abs{\psi}^2 + \frac{1}{2}\nabla_X\nabla_Y\abs{\psi}^2 - 2\textup{Re}\langle \nabla_X \psi, \nabla_Y \psi\rangle\\
& \hskip 1.5cm +\textup{Re}\langle D_f\psi, X \cdot \nabla_Y \psi + Y \cdot \nabla_X \psi\rangle + \frac{1}{2}\mathcal{L}_V g \nonumber
\end{align}
where $V$ is the vector field on $M$ defined by \eqref{eq:V}.

Combining all results from \eqref{eq:var_W}, \eqref{eq:div_fT} and \eqref{eq:S}, we complete the proof of \eqref{eq:var_W_U}
\end{proof}

\subsection{More about $S$ and $V$}
We will explore more about the tensor $S$ and vector field $V$ under some special assumptions on $\psi$. The following observation is useful:
\begin{lemma}
\label{lma:realVF}
For any real vector field $X$, and a spinor $\psi$, we have
\[\textup{Re}\langle X \cdot \psi, \psi\rangle = 0\]
\end{lemma}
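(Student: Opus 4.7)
The plan is to invoke the standard skew-Hermitian property of Clifford multiplication: for any real tangent vector $X$ and any spinors $\psi_1,\psi_2$, the Hermitian spinor pairing satisfies
\[\langle X\cdot \psi_1,\psi_2\rangle = -\langle \psi_1, X\cdot \psi_2\rangle.\]
This is a pointwise algebraic identity that follows from the Clifford relation $X\cdot X = -|X|^2$ together with the compatibility of the spin-invariant Hermitian metric with the Clifford action (equivalently, unit tangent vectors act as unitary involutions whose square is $-\id$, hence skew-adjoint). I would state this as a brief reminder and treat it as standard.

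Applying this with $\psi_1=\psi_2=\psi$ and using conjugate-symmetry of the Hermitian inner product gives
\[\langle X\cdot \psi,\psi\rangle = -\langle \psi, X\cdot\psi\rangle = -\overline{\langle X\cdot\psi,\psi\rangle},\]
so $\langle X\cdot\psi,\psi\rangle$ is purely imaginary, and therefore its real part vanishes. The only thing to check carefully is that $X$ being real matters: the identity uses that $\bar X=X$ so that no extra conjugation is introduced when one moves $X$ across the Hermitian pairing; for a complex combination the sign above would be spoiled. No further obstacle is expected since the argument is purely algebraic and does not invoke any analytic or curvature input.
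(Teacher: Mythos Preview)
Your argument is correct and is essentially the same as the paper's: both use the skew-adjointness of Clifford multiplication $\langle X\cdot\psi,\psi\rangle=-\langle\psi,X\cdot\psi\rangle$ together with conjugate symmetry of the Hermitian pairing to conclude that $\langle X\cdot\psi,\psi\rangle$ is purely imaginary. The paper phrases this by expanding $2\,\textup{Re}\langle X\cdot\psi,\psi\rangle=\langle X\cdot\psi,\psi\rangle+\langle\psi,X\cdot\psi\rangle$ and cancelling, which is the same computation.
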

\begin{proof}
\begin{align*}
2\textup{Re} \langle X \cdot \psi, \psi \rangle & = \langle X \cdot \psi, \psi\rangle + \overline{\langle X \cdot \psi, \psi\rangle}\\
& = \langle X \cdot \psi, \psi\rangle + \langle \psi, X \cdot \psi\rangle\\
& = -\langle \psi, X \cdot \psi\rangle + \langle\psi, X \cdot \psi\rangle\\
& = 0.	
\end{align*}
	
\end{proof}

\begin{lemma}
If $\psi$ is an eigen-spinor of $D$ or $D_f$ with real eigenvalue $\beta$, then we have
\[\Tr_g S_{g,f,\psi} = 2\beta^2\abs{\psi}^2.\]	
\end{lemma}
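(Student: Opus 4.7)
The plan is to compute the trace directly from the definition and then exploit the algebraic relation $D_f = D - \tfrac{1}{2}\nabla f\,\cdot$ together with Lemma \ref{lma:realVF}.

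First, picking a local orthonormal frame $\{e_i\}$ and using the symmetry of the arguments of $S$, I would compute
\[
\Tr_g S_{g,f,\psi} = \sum_{i=1}^n S(e_i,e_i) = 2\sum_{i=1}^n \textup{Re}\langle D_f\psi,\, e_i \cdot \nabla_{e_i}\psi\rangle = 2\,\textup{Re}\langle D_f\psi,\, D\psi\rangle,
\]
where in the last step I recognize $\sum_i e_i\cdot \nabla_{e_i}\psi = D\psi$. This reduces the problem to evaluating $\textup{Re}\langle D_f\psi, D\psi\rangle$.

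Next, using the identity $D_f\psi = D\psi - \tfrac{1}{2}\nabla f\cdot \psi$, I would handle the two cases. If $D\psi = \beta \psi$ with $\beta \in \R$, then
\[
\textup{Re}\langle D_f\psi, D\psi\rangle = \textup{Re}\left\langle \beta\psi - \tfrac{1}{2}\nabla f\cdot\psi,\, \beta\psi\right\rangle = \beta^2 |\psi|^2 - \tfrac{\beta}{2}\textup{Re}\langle \nabla f\cdot\psi, \psi\rangle.
\]
The cross term vanishes by Lemma \ref{lma:realVF} applied to the real vector field $\nabla f$, giving $\Tr_g S = 2\beta^2|\psi|^2$. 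Symmetrically, if $D_f\psi = \beta\psi$ with $\beta \in \R$, then $D\psi = \beta\psi + \tfrac{1}{2}\nabla f\cdot\psi$, and
\[
\textup{Re}\langle D_f\psi, D\psi\rangle = \textup{Re}\left\langle \beta\psi,\, \beta\psi + \tfrac{1}{2}\nabla f\cdot\psi\right\rangle = \beta^2|\psi|^2 + \tfrac{\beta}{2}\textup{Re}\langle \psi, \nabla f\cdot\psi\rangle,
\]
and again the second term vanishes by Lemma \ref{lma:realVF}, yielding the desired formula.

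There is no real obstacle here; the only subtlety is ensuring $\beta$ is real so that it can be pulled out of $\textup{Re}\langle\,\cdot\,,\,\cdot\,\rangle$ without complex conjugation, and recognizing that Clifford multiplication by the real vector field $\nabla f$ is skew-Hermitian so Lemma \ref{lma:realVF} applies. Both the $D$- and $D_f$-eigenspinor cases follow by the same two-line argument.
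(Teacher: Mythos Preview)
Your proof is correct and follows essentially the same approach as the paper's: reduce $\Tr_g S$ to $2\,\textup{Re}\langle D_f\psi, D\psi\rangle$ by summing over an orthonormal frame, then apply the eigenspinor condition and Lemma~\ref{lma:realVF} to kill the $\nabla f$ cross term. You have simply written out the $D_f$-eigenspinor case in full where the paper says ``similarly''.
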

\begin{proof}
It is useful to observe that the trace of the tensor $S$ is given by:
\[\Tr_g S_{g,f,\psi} = \sum_i S_{g,f,\psi}(e_i,e_i) =\textup{Re}\langle D_f\psi, e_i \cdot \nabla_{e_i} \psi + e_i \cdot \nabla_{e_i} \psi\rangle = \textup{Re}\langle D_f\psi,2D\psi\rangle.\]	
If $D\psi = \beta\psi$ for some $\beta \in \R$, then
\[\textup{Re}\langle D_f\psi,2D\psi\rangle = \textup{Re}\left\langle \beta\psi - \frac{1}{2}\nabla f \cdot \psi, 2\beta\psi \right\rangle = 2\beta^2\abs{\psi}^2.\]
Similarly if $D_f\psi = \beta\psi$.

\end{proof}

A spinor $\psi$ is said to be a Killing spinor if there exists a constant $\mu \in \R$ such that $\nabla_X \psi = \mu X \cdot \psi$ for any real vector field $X$. In particular, if $\mu = 0$, we call $\psi$ a parallel spinor. In case that $\psi$ is a Killing spinor, the $S$-tensor can be written in terms of the metric $g$:

\begin{lemma}
Suppose there exists a constant $\mu \in \R$ such that $\nabla_X \psi = \mu X \cdot \psi$ for any vector field $X$ (i.e. $\psi$ is a Killing spinor), then we have
\[S_{g,f,\psi} = 2\mu^2 n \abs{\psi}^2 g\]
\end{lemma}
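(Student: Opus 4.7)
The plan is to unwind the definition of $S_{g,f,\psi}$ using the Killing spinor identity $\nabla_X\psi = \mu X\cdot\psi$, reduce all Clifford products via the fundamental relation $X\cdot Y + Y\cdot X = -2g(X,Y)$, and then invoke Lemma \ref{lma:realVF} to kill an $\textup{Re}\langle \nabla f\cdot\psi,\psi\rangle$ term that would otherwise obstruct the identification with a pure multiple of $g$.

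First I would substitute $\nabla_Y\psi = \mu Y\cdot\psi$ and $\nabla_X\psi = \mu X\cdot\psi$ into \eqref{eq:S}, giving
\[
S_{g,f,\psi}(X,Y) = \mu\,\textup{Re}\bigl\langle D_f\psi,\, (X\cdot Y + Y\cdot X)\cdot\psi\bigr\rangle = -2\mu\, g(X,Y)\,\textup{Re}\langle D_f\psi,\psi\rangle,
\]
where the second equality uses the Clifford relation. So the problem reduces to computing the scalar $\textup{Re}\langle D_f\psi,\psi\rangle$.

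Next I would compute $D\psi$ directly from the Killing condition: $D\psi = e_i\cdot\nabla_{e_i}\psi = \mu e_i\cdot e_i\cdot\psi = -n\mu\,\psi$, hence $D_f\psi = -n\mu\,\psi - \tfrac{1}{2}\nabla f\cdot\psi$. Taking the real inner product with $\psi$ yields
\[
\textup{Re}\langle D_f\psi,\psi\rangle = -n\mu\,|\psi|^2 - \tfrac{1}{2}\,\textup{Re}\langle \nabla f\cdot\psi,\psi\rangle = -n\mu\,|\psi|^2,
\]
where the second equality applies Lemma \ref{lma:realVF} to the real vector field $\nabla f$.

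Combining the two displays gives $S_{g,f,\psi}(X,Y) = 2n\mu^2\,|\psi|^2\,g(X,Y)$, which is the claimed identity. There is no real obstacle here; the only subtlety is remembering to use Lemma \ref{lma:realVF} to discard the $\nabla f$-contribution so that the scalar factor depends only on $|\psi|^2$ and not on $f$. The calculation is entirely algebraic once the Killing condition is inserted.
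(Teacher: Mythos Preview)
Your proof is correct and follows essentially the same approach as the paper's: substitute the Killing condition into $S$, collapse $X\cdot Y + Y\cdot X$ via the Clifford relation, compute $D_f\psi = -n\mu\psi - \tfrac{1}{2}\nabla f\cdot\psi$, and invoke Lemma~\ref{lma:realVF} to eliminate the $\nabla f$ term. The only cosmetic difference is that you isolate $\textup{Re}\langle D_f\psi,\psi\rangle$ as a separate scalar computation, whereas the paper keeps everything in one chain of equalities.
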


\begin{proof}
By the given condition $\nabla_X \psi = \mu X \cdot \psi$, we have
\begin{align*}
S(X,Y) & = \textup{Re}\langle D_f\psi, X \cdot \nabla_Y \psi + Y \cdot \nabla_X \psi\rangle\\
& = \textup{Re}\left\langle \sum_k e_k \cdot \nabla_{e_k}\psi - \frac{1}{2}\nabla f \cdot \psi , \mu (X \cdot Y + Y \cdot X) \cdot \psi \right\rangle\\
& = \textup{Re}\left\langle \sum_k e_k \cdot \mu e_k \cdot \psi - \frac{1}{2}\nabla f \cdot \psi, -2\mu g(X,Y) \psi\right\rangle\\
& = \textup{Re}\left\langle -\mu n \psi - \frac{1}{2}\nabla f \cdot \psi, -2\mu g(X,Y) \psi\right\rangle
\end{align*}	
Since $\nabla f$ is a real vector field, by Lemma \ref{lma:realVF} we have $2\textup{Re}\langle \nabla f \cdot \psi, \psi \rangle = 0$. It concludes that
\[S(X,Y) = 2\mu^2n \abs{\psi}^2 g(X,Y).\]
\end{proof}

It is useful to observe that if $\hat\psi$ is an $f$-weighted harmonic spinor, i.e. $D_f\hat\psi = 0$, then according to \eqref{eq:V} we have $V = 0$.  More generally, the same result holds if $\psi$ is an eigen-spinor of the $f$-weighted Dirac operator $D_f$.
\begin{lemma}\label{lma:V_zero}
	If $\psi\in\mathcal{J}$ is an eigen-spinor of $D_f$ with eigenvalue $\beta$, then the vector field $V$ defined in \eqref{eq:V} vanishes:
	\[	V=\sum_{i=1}^n\textup{Re}\langle\psi,\, e_i\cdot D_f\psi\rangle e_i = 0.\]
\end{lemma}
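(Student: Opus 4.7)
The plan is to substitute the eigen-spinor relation $D_f\psi=\beta\psi$ directly into the definition of $V$ and then reduce the result to Lemma~\ref{lma:realVF}. Replacing $D_f\psi$ by $\beta\psi$ in \eqref{eq:V} gives
\[
V = \sum_{i=1}^n \textup{Re}\langle \psi,\, e_i\cdot\beta\psi\rangle\, e_i = \beta\sum_{i=1}^n \textup{Re}\langle \psi,\, e_i\cdot\psi\rangle\, e_i,
\]
so the lemma reduces to showing that $\textup{Re}\langle\psi,\, e_i\cdot\psi\rangle = 0$ for each $i$.

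Next, I would use the Hermitian-symmetry relation $\langle\psi,\, e_i\cdot\psi\rangle = \overline{\langle e_i\cdot\psi,\,\psi\rangle}$, which gives
\[
\textup{Re}\langle \psi,\, e_i\cdot\psi\rangle = \textup{Re}\langle e_i\cdot\psi,\, \psi\rangle.
\]
Since each $e_i$ is a real (local) vector field in the chosen frame, Lemma~\ref{lma:realVF} implies the right-hand side vanishes. Therefore every coefficient in the expression for $V$ is zero, proving $V=0$.

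There is essentially no obstacle here: the statement is an immediate consequence of Lemma~\ref{lma:realVF} once the eigenvalue equation is used to factor out the scalar $\beta$. The only point that requires care is the justification of $\textup{Re}\langle\psi, e_i\cdot\psi\rangle = \textup{Re}\langle e_i\cdot\psi,\psi\rangle$ via conjugate symmetry of the Hermitian inner product on spinors, after which the real-vector-field hypothesis of Lemma~\ref{lma:realVF} applies directly. The argument is frame-independent since $V$ is defined invariantly, but using a local orthonormal frame makes the reduction transparent.
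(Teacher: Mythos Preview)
Your proof is correct and is exactly the argument the paper has in mind: the paper's proof reads in full ``Direct consequence of Lemma~\ref{lma:realVF}'', and you have simply written out that consequence. The only implicit point worth noting is that pulling $\beta$ outside $\textup{Re}$ uses $\beta\in\R$, which is justified since $D_f=e^{f/2}D\,e^{-f/2}$ is unitarily equivalent to the self-adjoint Dirac operator $D$.
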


\begin{proof}
Direct consequence of Lemma \ref{lma:realVF}.
\end{proof}

If $\psi$ is a Killing spinor, then we can show that $V$ is parallel to the gradient $\nabla f$.
\begin{lemma}
Suppose there exists a constant $\mu \in \R$ such that $\nabla_X \psi = \mu X \cdot \psi$ for any vector field $X$ (i.e. $\psi$ is a Killing spinor), then we have
\[V = \abs{\psi}^2 \nabla f.\]
\end{lemma}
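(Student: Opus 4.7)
The plan is to unfold the definition of $V$ in \eqref{eq:V} using the Killing spinor equation and to reduce the computation to Clifford algebra at each point, with no integration by parts needed; essentially, the argument mirrors the one used in the preceding lemma on $S$.

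First I would compute $D_f\psi$ for a Killing spinor. The Killing equation $\nabla_{e_k}\psi = \mu\, e_k \cdot \psi$ together with $e_k \cdot e_k = -1$ in the Clifford algebra gives
\[D\psi = \sum_k e_k \cdot \nabla_{e_k}\psi = \mu\sum_k e_k \cdot e_k \cdot \psi = -\mu n\,\psi,\]
so $D_f\psi = D\psi - \tfrac{1}{2}\nabla f \cdot \psi = -\mu n\,\psi - \tfrac{1}{2}\nabla f \cdot \psi$. Substituting this into $V = \sum_i \textup{Re}\langle \psi, e_i \cdot D_f\psi\rangle\, e_i$ produces two contributions. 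The term proportional to $\mu n\, \textup{Re}\langle \psi, e_i \cdot \psi\rangle$ vanishes at once by Lemma \ref{lma:realVF}, since $e_i$ is a real vector field, leaving only the $\nabla f$ contribution.

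The remaining contribution, and the heart of the computation, is $-\tfrac{1}{2}\textup{Re}\langle \psi, e_i \cdot \nabla f \cdot \psi\rangle$. To evaluate it I would invoke the Clifford anticommutation relation
\[e_i \cdot \nabla f + \nabla f \cdot e_i = -2 g(e_i, \nabla f) = -2\, e_i(f),\]
together with the skew-Hermiticity of Clifford multiplication by real vector fields, which yields $\langle \psi, \nabla f \cdot e_i \cdot \psi\rangle = \overline{\langle \psi, e_i \cdot \nabla f \cdot \psi\rangle}$ (move $\nabla f$ past $\psi$ and then $e_i$ past $\psi$, picking up two minus signs). Pairing the anticommutation identity with $\langle \psi, \cdot\, \psi\rangle$ and taking real parts expresses $\textup{Re}\langle \psi, e_i \cdot \nabla f \cdot \psi\rangle$ as a real scalar multiple of $e_i(f)\,\abs{\psi}^2$, so that reassembling $V = \sum_i V_i\, e_i$ identifies $V$ as a scalar multiple of $\abs{\psi}^2\nabla f$, as claimed.

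The main obstacle is sign bookkeeping: one must check that the cross terms $e_i \cdot e_j$ with $i\neq j$ contribute nothing to the real part (they are skew under $*$, hence $\langle \psi, e_i e_j \psi\rangle$ is purely imaginary), and track the minus signs from the Clifford anticommutation and from each application of skew-Hermiticity so that they combine correctly. Apart from this, the identity is a short pointwise algebraic calculation.
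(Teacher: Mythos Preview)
Your approach is the same as the paper's: compute $D_f\psi=-\mu n\,\psi-\tfrac12\nabla f\cdot\psi$, kill the first contribution via Lemma~\ref{lma:realVF}, and reduce the second to the Clifford anticommutation relation together with skew-Hermiticity (the paper phrases this as $\textup{Re}\langle\psi,e_ie_k\psi\rangle=-\delta_{ik}|\psi|^2$). Your caution about bookkeeping is apt---carrying the constants through carefully actually yields $V=\tfrac12|\psi|^2\nabla f$, consistent with the decomposition $V_f=U+\tfrac{|\psi|^2}{2}\nabla f$ used later in the paper; the paper's own proof drops a factor of $\tfrac12$ when it writes $\textup{Re}\langle\psi,e_ie_k\psi\rangle=\langle\psi,e_ie_k\psi\rangle+\langle e_ie_k\psi,\psi\rangle$.
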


\begin{proof}
\begin{align*}
e_i \cdot D_f\psi & = e_i \cdot \Big(\sum_k e_k \cdot \nabla_{e_k}\psi - \frac{1}{2}\nabla f \cdot \psi\Big)\\
& = \mu \sum_k e_i \cdot e_k \cdot e_k \cdot \psi - \frac{1}{2}\sum_k(\nabla_k f) e_i \cdot e_k \cdot \psi\\
& = -\mu n e_i \cdot \psi - \frac{1}{2}\sum_k (\nabla_k f)e_i \cdot e_k \cdot \psi
\end{align*}
Then, we have
\begin{align*}
& \textup{Re}\langle \psi, e_i \cdot D_f \psi\rangle\\
& = \underbrace{\textup{Re}\langle \psi, -\mu n e_i \cdot \psi\rangle}_{=0 \text{ by Lemma \ref{lma:realVF}}} -\frac{1}{2}\sum_k (\nabla_k f)\textup{Re}\langle \psi, e_i \cdot e_k \cdot \psi\rangle\\
& = - \frac{1}{2}\sum_k (\nabla_k f)\left(\langle \psi, e_i \cdot e_k \cdot \psi\rangle + \langle e_i \cdot e_k \cdot \psi, \psi\rangle\right)\\
& = - \frac{1}{2}\sum_k (\nabla_k f)\left(\langle \psi, e_i \cdot e_k \cdot \psi \rangle + \langle \psi, e_k \cdot e_i \cdot \psi\rangle\right)\\
& = -\frac{1}{2}\sum_k (\nabla_k f) \langle \psi, -2\delta_{ik}\psi\rangle = \abs{\psi}^2 \nabla_i f 
\end{align*}
This proves our result.
	
\end{proof}

\section{Euler-Lagrange's Equations and Spinorial Ricci Flow}
\label{sect:EL_GF}

\subsection{Euler-Lagrange's equations}
After deriving the first variation formula for the $\mathbb{W}_\lambda$-functional in the previous section, we next discuss the Euler-Lagrange's equations under some natural constraints discussed below.

For any Riemannian metric $g$, function $f$, and positive constant $\tau$, we denote
\[d\Omega_{g,f,\tau} := e^{-f}(4\pi\tau)^{-n/2}d\mu_g.\]
If $g,f,\tau$ are clear from the context, we will simply denote it by $d\Omega$.

Define the following spaces of functions and spinors:
\begin{align*}
\mathcal{I}_{g,\psi,\tau} & := \left\{f \in C^\infty(M,\R) : \int_M d\Omega_{g,f,\tau} = 1\right\}	\\
\mathcal{J}_{g,f,\tau,c} & := \left\{\psi \in \Gamma^\infty(\Sigma M) : \int_M \abs{\psi}^2 d\Omega_{g,f,\tau} = c\right\}
\end{align*}
If $g,f,\psi,\tau,c$ are clear from the context, we simply denote them by $\mathcal{I}$ and $\mathcal{J}$.

Then, we find the critical points of $\mathbb{W}_\lambda$ subject to the constraints $f \in \mathcal{I}_{g,\psi,\tau}$ and $\psi \in \mathcal{J}_{g,f,\tau,c}$. It is useful to observe that
\begin{equation}
\label{eq:dOmega/dt}
\D{}{t}d\Omega_{g,f,\tau} = \left(\frac{1}{2}\tr_g(\dot{g}) - \dot{f} - \frac{n\dot\tau}{2\tau}\right)d\Omega_{g,f,\tau}.
\end{equation}
Therefore, we have
\begin{align}
\label{eq:dI/dt}
\frac{d}{dt}\int_M d\Omega_{g,f,\tau} & = \int_M \left(\frac{1}{2}\tr_g(\dot{g}) - \dot{f} - \frac{n\dot\tau}{2\tau}\right)d\Omega_{g,f,\tau},\\
\label{eq:dJ/dt}
\frac{d}{dt}\int_M \abs{\psi}^2 d\Omega_{g,f,\tau} & = \int_M \left\{2\textup{Re}\langle \dot\psi, \psi\rangle + \left(\frac{1}{2}\tr_g(\dot{g}) - \dot{f} - \frac{n\dot\tau}{2\tau}\right)\abs{\psi}^2\right\}d\Omega_{g,f,\tau}.
\end{align}

Next we use the method of Lagrange's multipliers to find out the Euler-Lagrange's equation of $\mathbb{W}_\lambda$ subject to constraints $f \in \mathcal{I}$ and $\psi \in \mathcal{J}$.

From \eqref{eq:var_W_U}, \eqref{eq:dI/dt}, and \eqref{eq:dJ/dt}, we have
\begin{align*}
& \frac{d}{dt}\mathbb{W}_\lambda - \alpha\frac{d}{dt}\int_M\,d\Omega - \beta \frac{d}{dt}\int_M \abs{\psi}^2 d\Omega\\
& = \int_M \bigg\{ \left\langle \tau\dot{g} - \dot{\tau}g,\, \Ric_f - \frac{\lambda}{2\tau}g  \right\rangle - \left\langle \dot{g}, \mathcal{L}_{V} g + 2S\right\rangle + 8\textup{Re} \left\langle \dot\psi, D_f^2\psi - \frac{\beta}{4}\psi\right\rangle \nonumber\\
&\hskip 2cm + \left(\frac{1}{2}\tr_g(\dot{g}) - \dot{f} -  \frac{n\dot\tau}{2\tau}\right)\underbrace{\left(4\textup{Re}\langle D_f^2\psi, \psi\rangle - \beta \abs{\psi}^2 -\tau R_f - \lambda(f-n-1) - \alpha\right)}_{(*)}\bigg\} d\Omega. \nonumber	
\end{align*}
Therefore, assuming $\tau > 0$ is a constant (i.e. $\dot\tau = 0$), the Euler-Lagrange's equations of $\mathbb{W}_\lambda$ subject to constraints $f \in \mathcal{I}$ and $\psi \in \mathcal{J}$ are given by:
\begin{align*}
\tau\left(\Ric_f - \frac{\lambda}{2\tau}g\right) - \mathcal{L}_{V}g - 2S + \frac{(*)}{2}g& = 0\\
D_f^2\psi & = \frac{\beta}{4}\psi\\
\underbrace{\tau R_f + \lambda(f-n-1) + \alpha + \beta\abs{\psi}^2 - 4\textup{Re}\langle D_f^2\psi, \psi\rangle}_{=(*)} & = 0	
\end{align*}
After simplification, we get the following:

\begin{proposition}
\label{prop:EL_constraint}
Fix $\tau > 0$. The Euler-Lagrange's equations of $\mathbb{W}_\lambda(g,f,\psi,\tau)$ under the constraints $f \in \mathcal{I}$ and $\psi \in \mathcal{J}$ are given by:	
\begin{align}
\label{eq:EL_constraint}
\Ric + \mathcal{L}_{\frac{1}{2}\nabla f-\frac{1}{\tau}V}g -\frac{2}{\tau}S - \frac{\lambda}{2\tau}g  & = 0, & D_f^2\psi & = \frac{\beta}{4}\psi, & \tau R_f + \lambda(f-n) = \lambda - \alpha,
\end{align}
where $S$ and $V$ are tensor and vector fields defined in \eqref{eq:S} and \eqref{eq:V} respectively. Therefore, if $(g,f,\psi)$ is a critical point, then $g$ is a ``twisted'' Ricci soliton, and $\psi$ is an eigen-spinor of $D_f^2$.
\end{proposition}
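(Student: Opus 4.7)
The plan is to treat the computation just before the statement as essentially complete, and simply carry out the Lagrange multiplier argument cleanly, then simplify the resulting equations using the second Euler--Lagrange equation to eliminate the coupled terms.

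First I would recall that by the method of Lagrange multipliers, a critical point of $\mathbb{W}_\lambda$ subject to the constraints $f \in \mathcal{I}$ and $\psi \in \mathcal{J}$ corresponds (with $\dot\tau=0$) to the vanishing of
\[
\frac{d}{dt}\mathbb{W}_\lambda - \alpha\frac{d}{dt}\int_M d\Omega - \beta \frac{d}{dt}\int_M \abs{\psi}^2 d\Omega
\]
for all smooth variations $(\dot g,\dot f,\dot \psi)$, where $\alpha,\beta\in\R$ are the Lagrange multipliers. Combining Proposition~\ref{prop:var_W} with the identities \eqref{eq:dI/dt} and \eqref{eq:dJ/dt}, this expression has already been rewritten in the displayed formula preceding the proposition as a single integral in which the integrand is a sum of three terms, each linear in one of the independent variations $\dot g$, $\dot\psi$, and $\frac12\tr_g(\dot g) - \dot f$.

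Next I would argue that, since the three variations $\dot g$, $\dot\psi$, and $\dot f$ may be chosen independently of one another, the corresponding coefficients must vanish pointwise. Collecting the coefficient of $\dot\psi$ gives the spinorial eigenequation $D_f^2\psi = \frac{\beta}{4}\psi$; collecting the coefficient of $\frac12\tr_g(\dot g)-\dot f$ (after freely varying $\dot f$) gives the scalar equation
\[
(*) := 4\textup{Re}\langle D_f^2\psi,\psi\rangle - \beta\abs{\psi}^2 - \tau R_f - \lambda(f-n-1) - \alpha = 0;
\]
and collecting the coefficient of $\dot g$, while accounting for the $\frac12 g\,\tr_g(\dot g)$ contribution from the trace term, yields the tensor equation
\[
\tau\!\left(\Ric_f - \tfrac{\lambda}{2\tau}g\right) - \mathcal{L}_V g - 2S + \tfrac{(*)}{2}g = 0.
\]

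Finally I would simplify. Substituting $D_f^2\psi = \frac{\beta}{4}\psi$ into $(*)$ makes the first two terms cancel, reducing it to $-\tau R_f - \lambda(f-n-1) - \alpha = 0$, i.e., $\tau R_f + \lambda(f-n) = \lambda - \alpha$, which is the third asserted equation. Since $(*)=0$, the $\frac{(*)}{2}g$ term drops from the tensor equation, leaving $\tau\Ric_f - \frac{\lambda}{2}g - \mathcal{L}_V g - 2S = 0$. Dividing by $\tau$ and using the standard identity $\Ric_f = \Ric + \nabla\nabla f = \Ric + \tfrac{1}{2}\mathcal{L}_{\nabla f} g$ to combine the two Lie-derivative terms gives the twisted soliton equation in the stated form. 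There is no serious analytic obstacle here: the one minor subtlety is to correctly identify the three independent scalar/tensor/spinor variations in the variation formula (in particular, to separate the $\frac12\tr_g(\dot g)$ piece that appears in the $(*)$-coefficient from the $\dot g$ pairing with $\Ric_f - \frac{\lambda}{2\tau}g$), but this is purely a bookkeeping matter since $\dot g$ and $\dot f$ can be varied independently.
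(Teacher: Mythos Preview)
Your proposal is correct and follows essentially the same approach as the paper: the paper displays the combined Lagrange-multiplier variation, reads off the three Euler--Lagrange equations, and then simply says ``After simplification, we get the following,'' whereas you spell out those simplification steps explicitly (using $D_f^2\psi=\tfrac{\beta}{4}\psi$ to reduce $(*)$ and $\Ric_f=\Ric+\tfrac12\mathcal{L}_{\nabla f}g$ to rewrite the tensor equation). There is no substantive difference in method.
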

\begin{remark}
Using the constraints $f \in \mathcal{I}$ and $\psi \in \mathcal{J}$, one can also show that
\begin{align*}
\beta & = \frac{4}{c}\int_M \abs{D_f\psi}^2 d\Omega, & \mathbb{W}_\lambda = \beta c - \lambda + \alpha.
\end{align*}
\end{remark}
\begin{remark}
We call the metric $g$ of the critical point to be a ``twisted'' Ricci soliton because of the presence of the $S$-tensor term in \eqref{eq:EL_constraint}.
\end{remark}

\begin{remark}[From the $L^2(d\Omega)$-constraint to the unit spinor bundle]
In Proposition~\ref{prop:EL_constraint} we imposed the global constraint $\psi\in J_{g,f,\tau,c}$, i.e.
\[
\int_M |\psi|^2\,d\Omega_{g,f,\tau}=c,
\]
which yields the eigen-spinor equation $D_f^2\psi=\frac{\beta}{4}\psi$.

For the evolution problem, it is analytically advantageous (and standard in the spinor-flow literature
\cite{AWW}) to work on the \emph{unit spinor bundle}, i.e.\ to impose the stronger pointwise constraint
$|\psi|^2\equiv c$. Besides removing the scaling direction in the spinor variable, this normalization
has two key analytic consequences for our coupled flow being introduced next.

\smallskip
\noindent\emph{(1) Projected Euler--Lagrange equation.}
If one repeats the first-variation argument for $\mathbb{W}_\lambda$ under the single normalization
$\int_M d\Omega_{g,f,\tau}=1$, but restricts to variations tangent to $\{|\psi|^2\equiv c\}$
(so that $\mathrm{Re}\langle\dot\psi,\psi\rangle=0$ pointwise), then the spinor Euler--Lagrange equation
is the orthogonal projection of $D_f^2\psi$ onto $\psi^\perp$. Using the weighted Weitzenb\"ock formula
\eqref{eq:f-Weitzenbock}, this is equivalent to
\[
\Delta_f\psi+\frac{|\nabla\psi|^2}{c}\,\psi=0,
\]
which agrees with the stationary equation for the $\psi$-component of the flow \eqref{eq:coupled_dpsi}.

\smallskip
\noindent\emph{(2) Uniform control of the top-order coefficients and the parabolicity regime for $f$.}
The pointwise constraint $|\psi|^2\equiv c$ also freezes the highest-order coefficients in the gauged
system of Section~4. In particular, writing
\[
V_f \;=\; U + \frac{c}{2}\nabla f,
\qquad
U := \sum_i \mathrm{Re}\langle\psi,\,e_i\!\cdot D\psi\rangle\,e_i,
\]
we obtain
\[
\frac{2}{\tau}\,\mathrm{div}(V_f)
= \frac{c}{\tau}\,\Delta f + \frac{2}{\tau}\,\mathrm{div}(U),
\]
so the principal second-order part of the $f$-equation becomes
\[
-\Delta f + \frac{2}{\tau}\mathrm{div}(V_f)
= -\Bigl(1-\frac{c}{\tau}\Bigr)\Delta f + \text{(lower order terms)}.
\]
Consequently, after DeTurck gauge-fixing, the scalar equation is uniformly \emph{backward} parabolic
if $c<\tau$ and uniformly \emph{forward} parabolic if $c>\tau$ (degenerating at $c=\tau$). In the
fully forward regime $c>\tau$, the gauged system becomes a forward quasilinear parabolic evolution
in all variables $(g,f,\psi)$, and the monotonicity identity of Proposition~\ref{lma:monotonicity_A} below shows that
$\mathbb{W}_\lambda$ is a genuine Lyapunov functional for this forward parabolic gradient-like flow. This
forward-parabolic viewpoint suggests the potential applicability of dynamical tools such as
{\L}ojasiewicz--Simon inequalities for stability and convergence analysis near critical points,
in contrast to the classical Perelman setting where the $f$-equation is backward.

\smallskip
In summary, the flow introduced below can be viewed as the negative $L^2$-gradient flow of $\mathbb{W}_\lambda$
on the constraint space
\[
\Bigl\{(g,f,\psi): \int_M d\Omega_{g,f,\tau}=1,\;\;|\psi|^2\equiv c\Bigr\}.
\]
\end{remark}

\subsection{Spinorial Ricci flow}
Motivated by the above discussion and by the spinor flow of 
Ammann--Wei\ss--Witt \cite{AWW} (see also He--Wang \cite{HW} for regularity results), we introduce 
the following coupled evolution system. The flow is designed to preserve the normalization 
$\int_M d\Omega_{g,f,\tau}=1$ and, provided $|\psi_0|^2\equiv c$ pointwise, to evolve inside the unit 
spinor bundle $|\psi(t)|^2\equiv c$ (hence also $\int_M|\psi(t)|^2\,d\Omega_{g,f,\tau}=c$). 
The functional $\mathbb{W}_\lambda(g,f,\psi,\tau)$ is monotone decreasing along the flow. Moreover, in the regime $c>\tau$ the DeTurck-gauged system is fully forward parabolic, so that
$\mathbb{W}_\lambda$ serves as a Lyapunov functional for a forward parabolic evolution, which may be useful
for future stability and convergence analysis.

\newpage

\begin{definition}[Spinorial Ricci Flow]
On a Riemannian spin manifold $(M^n,g_0)$, we consider the following system of equations, called the \emph{spinorial Ricci flow}, with initial conditions $(g_0,f_0,\psi_0)$ so that $\abs{\psi_0}^2  \equiv c$ and $\tau > 0$ is fixed:
\begin{align}
\label{eq:coupled_dg}\D{g}{t} & = -2\left(\Ric  + \mathcal{L}_{\frac{1}{2}\nabla f  - \frac{1}{\tau}V_f}g - \frac{2}{\tau}S_{g,f,\psi} - \frac{\lambda}{2\tau}g\right)\\
\label{eq:coupled_df}\D{f}{t} & =  - \Delta f - R + \frac{\lambda n}{2\tau} +\frac{4}{\tau} \Tr_g S + \frac{2}{\tau}\div\,V_f \\
\label{eq:coupled_dpsi}\D{\psi}{t} & =  \Delta_f \psi +  \frac{|\nabla\psi|^2}{|\psi|^2}\psi  
\end{align}
\end{definition}

\begin{proposition}
\label{lma:monotonicity_A}
As long as solution exists, we have
\begin{align}
	\label{eq:monotonicity_A}
& \frac{d}{dt}\mathbb{W}_\lambda(g,f,\psi,\tau)\\
= &  -\int_M \left(2\tau \abs{\Ric + \mathcal{L}_{\frac{1}{2}\nabla f  - \frac{1}{\tau}V_f}g - \frac{2}{\tau}S   - \frac{\lambda}{2\tau}g}^2 + 8 \abs{ \Delta_f\psi + \frac{|\nabla\psi|^2}{|\psi|^2}\psi}^2\right)d\Omega \leq 0. \nonumber
\end{align}
Therefore, $\mathbb{W}_\lambda$ is monotone decreasing, and it is stationary if and only if $(g,f,\psi)$ is a ``twisted'' Ricci soliton given by
\[ \left(1-\frac{c}{\tau}\right)\Ric_f - \frac{\lambda}{2\tau}g= \frac{2}{\tau}\div_f T_\psi + \frac{4}{\tau} \textup{Re}\langle\nabla\psi\otimes\nabla\psi\rangle\]
and $\psi$ satisfies $\Delta_f \psi + \frac{\abs{\nabla\psi}^2}{c}\psi = 0$.

\end{proposition}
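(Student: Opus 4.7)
The plan is to substitute the flow equations~\eqref{eq:coupled_dg}--\eqref{eq:coupled_dpsi} (with $\dot\tau=0$) into the first variation formula~\eqref{eq:var_W_U} of Proposition~\ref{prop:var_W}, then reorganize the integrand into a sum of negative squares plus a remainder that will vanish after integration. As a preliminary, I will check that $|\psi|^2\equiv c$ is preserved along the flow: the $f$-Weitzenb\"ock identity~\eqref{eq:f-Weitzenbock-with-norm} applied to $|\psi|^2$ yields $\textup{Re}\langle\Delta_f\psi,\psi\rangle = \tfrac{1}{2}\Delta_f|\psi|^2 - |\nabla\psi|^2$, which reduces on the unit bundle to $-|\nabla\psi|^2$, so that $\partial_t|\psi|^2 = 2\textup{Re}\langle\dot\psi,\psi\rangle = 0$ by~\eqref{eq:coupled_dpsi}. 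This preservation also ensures that the vector field $V = \sum_i\textup{Re}\langle\psi,e_i\cdot D_f\psi\rangle e_i$ from the variation formula coincides with the $V_f = U + \tfrac{c}{2}\nabla f$ driving the flow, so the two notations are identified throughout.

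Set $A := \Ric + \mathcal{L}_{\frac{1}{2}\nabla f-\frac{1}{\tau}V_f}g - \tfrac{2}{\tau}S - \tfrac{\lambda}{2\tau}g$, so that $\dot g = -2A$. Using $\mathcal{L}_{\frac{1}{2}\nabla f}g = \nabla^2 f$, one rewrites $A = \Ric_f - \tfrac{1}{\tau}\mathcal{L}_{V_f}g - \tfrac{2}{\tau}S - \tfrac{\lambda}{2\tau}g$. The first two terms of~\eqref{eq:var_W_U} then combine algebraically into $\tau\langle\dot g,A\rangle = -2\tau|A|^2$, the first square in~\eqref{eq:monotonicity_A}. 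For the spinor piece, the Weitzenb\"ock formula~\eqref{eq:f-Weitzenbock} together with $\textup{Re}\langle\psi,\dot\psi\rangle = 0$ gives $8\textup{Re}\langle D_f^2\psi,\dot\psi\rangle = -8\textup{Re}\langle\Delta_f\psi,\dot\psi\rangle$; expanding~\eqref{eq:coupled_dpsi} and again invoking $\textup{Re}\langle\Delta_f\psi,\psi\rangle = -|\nabla\psi|^2$ on the unit bundle, this reduces to $-8|\Delta_f\psi|^2 + 8\tfrac{|\nabla\psi|^4}{c} = -8|\dot\psi|^2$, the second square.

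The remaining contribution is the trace-type term $\bigl(\tfrac{1}{2}\tr_g\dot g - \dot f\bigr)\bigl(4\textup{Re}\langle D_f^2\psi,\psi\rangle - \tau R_f - \lambda(f-n-1)\bigr)$. Tracing~\eqref{eq:coupled_dg} and subtracting~\eqref{eq:coupled_df} collapses the first factor to an explicit multiple of $\Tr_g S$, while on the unit bundle the second factor simplifies via $4\textup{Re}\langle D_f^2\psi,\psi\rangle = 4|\nabla\psi|^2 + cR_f$ to $4|\nabla\psi|^2 + (c-\tau)R_f - \lambda(f-n-1)$. I expect the resulting integral against $d\Omega$ to vanish after integration by parts in the weighted measure: the weighted self-adjointness of $D_f$, together with the pointwise identity $\textup{Re}\langle D_f\psi,X\cdot\psi\rangle = -\langle V_f,X\rangle$ (immediate from the skew-Hermiticity of Clifford multiplication and the definition of $V_f$), should permit rewriting the integrand as a total $f$-divergence and hence as zero. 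Verifying this cancellation is the main obstacle; it is where the specific form of the $f$-equation~\eqref{eq:coupled_df} is essential.

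Given the sum-of-squares representation, the stationary characterization is immediate: $\frac{d}{dt}\mathbb{W}_\lambda = 0$ forces both $A = 0$ and $\dot\psi = 0$ pointwise. The vanishing $\dot\psi = 0$ is precisely $\Delta_f\psi + \tfrac{|\nabla\psi|^2}{c}\psi = 0$. For the twisted soliton equation, I will use identity~\eqref{eq:div_fT} (which on the unit bundle simplifies since $\nabla\nabla|\psi|^2 = 0$) to express $\mathcal{L}_{V_f}g$ in terms of $\Ric_f$, $\div_f T_\psi$, $\textup{Re}\langle\nabla\psi\otimes\nabla\psi\rangle$, and $S$; substituting into $A = 0$ and rearranging produces the stated form $\bigl(1 - \tfrac{c}{\tau}\bigr)\Ric_f - \tfrac{\lambda}{2\tau}g = \tfrac{2}{\tau}\div_f T_\psi + \tfrac{4}{\tau}\textup{Re}\langle\nabla\psi\otimes\nabla\psi\rangle$.
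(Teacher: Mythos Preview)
Your outline tracks the paper's proof almost exactly: first verifying that $|\psi|^2\equiv c$ is preserved, then combining~\eqref{eq:var_W_U} with the identity $8\,\textup{Re}\langle D_f^2\psi,\dot\psi\rangle=-8\bigl|\Delta_f\psi+\tfrac{|\nabla\psi|^2}{|\psi|^2}\psi\bigr|^2$, and finally deducing the stationary characterisation from~\eqref{eq:div_fT} with $|\psi|^2\equiv c$.

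The only point where you diverge from the paper is your treatment of the trace factor $\bigl(\tfrac12\tr_g\dot g-\dot f\bigr)\bigl(4\,\textup{Re}\langle D_f^2\psi,\psi\rangle-\tau R_f-\lambda(f-n-1)\bigr)$. You anticipate that this survives as a nontrivial integral and propose to kill it by an integration-by-parts argument involving weighted self-adjointness of $D_f$ and the identity $\textup{Re}\langle D_f\psi,X\cdot\psi\rangle=-\langle V_f,X\rangle$. The paper's proof does none of this: it simply declares the monotonicity formula proved immediately after obtaining~\eqref{eq:Df^2=||^2} and invoking~\eqref{eq:var_W_U}. In other words, the paper treats this term as vanishing \emph{pointwise}, because the $f$-equation~\eqref{eq:coupled_df} is engineered precisely so that $\tfrac12\tr_g\dot g-\dot f=0$ (equivalently, so that the measure $d\Omega_{g,f,\tau}$ is fixed along the flow, not merely preserved in total mass). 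So the ``main obstacle'' you flag should be resolved by a direct algebraic comparison of $\tfrac12\tr_g\dot g$ with~\eqref{eq:coupled_df}, not by an integral identity; your observation that the difference collapses to a multiple of $\Tr_gS$ is pointing you toward a coefficient check in~\eqref{eq:coupled_df} rather than to any nontrivial analytic cancellation.
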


\begin{proof}
	Using the identity $\Delta_f|\psi|^2 = 2\textup{Re}\langle\Delta_f\psi,\, \psi\rangle + 2|\nabla\psi|^2$ and the Weitzenb\"ock identity, we have
\begin{align}
\label{eq:var_D}
& 8\textup{Re}\langle D_f^2\psi,\, \dot{\psi}\rangle \\
&= -8 \textup{Re}\langle \Delta_f\psi,\, \dot{\psi}\rangle + R_f  \frac{\partial}{\partial t}|\psi|^2  \nonumber	\\
&=  -8 \textup{Re}\left\langle \Delta_f\psi + \frac{|\nabla\psi|^2}{|\psi|^2}\psi ,\, \dot{\psi}\right\rangle + \left(\frac{4|\nabla\psi|^2}{|\psi|^2}  + R_f \right)\frac{\partial}{\partial t}|\psi|^2.	\nonumber
\end{align}
Next, we observe that
\begin{align*}
	\frac{\partial}{\partial t}|\psi|^2 &= 2\textup{Re}\langle \dot\psi, \psi\rangle\\
	&= 2\textup{Re}\langle \Delta_f\psi, \psi\rangle + \frac{|\nabla\psi|^2}{|\psi|^2}2\textup{Re}\langle \psi, \psi\rangle\\
	&= \Delta_f|\psi|^2.
\end{align*}
Hence, if $\abs{\psi}^2 \equiv c$ at $t = 0$, then $|\psi|^2 \equiv c$ as long as the flow exists. Consequently,
\begin{equation}
\label{eq:Df^2=||^2}
8\textup{Re}\langle D_f^2\psi,\, \dot{\psi}\rangle = -8 \abs{ \Delta_f\psi + \frac{|\nabla\psi|^2}{|\psi|^2}\psi}^2.
\end{equation}
Combining with \eqref{eq:var_W_U}, we proved the monotonicity formula \eqref{eq:monotonicity_A}.

Next, from the monotonicity formula we see that $\mathbb{W}_\lambda$ is stationary if and only if $(g,f,\psi)$ satisfies
	\[	\Ric + \mathcal{L}_{\frac{1}{2}\nabla f  - \frac{1}{\tau}V_f}g - \frac{2}{\tau}S   - \frac{\lambda}{2\tau}g = 0,\quad\text{and}\quad \Delta_f\psi + \frac{|\nabla\psi|^2}{|\psi|^2}\psi = 0.\]
Plugging \eqref{eq:div_fT} with $|\psi|^2\equiv c$ into the first identity above, we get 
\[\left(1-\frac{c}{\tau}\right)\Ric_f - \frac{\lambda}{2\tau}g= \frac{2}{\tau}\div_f T_\psi + \frac{4}{\tau} \textup{Re}\langle\nabla\psi\otimes\nabla\psi\rangle.\]
\end{proof}

\medskip

%\begin{corollary}
%	Fix $\lambda$ in the definition of $\mathbb W_\lambda$.
%If $(g,f,\psi)$ is a critical point of $\mathbb W_\lambda$ and $(g,f)$ is a gradient Ricci soliton
%satisfying $\Ric_f=\frac{\lambda}{2\tau}g$, then $\lambda\leq 0$.
%\end{corollary}

%\begin{proof}
%At a critical point we have
%\[
%\Bigl(1-\frac{c}{\tau}\Bigr)\Ric_f-\frac{\lambda}{2\tau}g
%=
%\frac{2}{\tau}\div_f T_\psi+\frac{4}{\tau}\textup{Re}\langle\nabla\psi\otimes\nabla\psi\rangle.
%\]
%If $\Ric_f=\frac{\lambda}{2\tau}g$, then the left-hand side equals $-\frac{c}{\tau}\frac{\lambda}{2\tau}g$.
%Taking the trace and integrating against $d\Omega$, the weighted divergence term integrates to zero,
%so we obtain
%\[
%-\frac{c}{\tau}\frac{n\lambda}{2\tau}\int_M d\Omega
%=\frac{4}{\tau}\int_M |\nabla\psi|^2\,d\Omega \ge 0.
%\]
%Since $\int_M d\Omega=1$, this forces $\lambda\le 0$. In particular, if $\lambda>0$ (shrinker case),
%no such critical point exists. If equality holds, then $\lambda=0$ and $\nabla\psi\equiv 0$.
%\end{proof}
%
%\medskip

\begin{lemma}\label{lem:critical_point}
	Let  $(g,f,\psi)$ be a critical point of $\mathbb W_\lambda$. Then
	\begin{align*}
		&\int_M R_f d\Omega = \frac{n\lambda}{2\tau} + \frac{4}{\tau}\int_M |D_f\psi|^2 d\Omega\\[1ex]
		\text{and}\quad &\left(1-\frac{c}{\tau}\right)\int_M |D_f\psi|^2 d\Omega = \frac{nc\lambda}{8\tau} + \int_M |\nabla\psi|^2 d\Omega.
	\end{align*}
	\[	\]
\end{lemma}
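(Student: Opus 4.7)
The plan is to derive the first identity by taking the $g$-trace of the tensor Euler--Lagrange equation from Proposition~\ref{prop:EL_constraint} and integrating against $d\Omega$; the second identity will then follow by combining the first with the integrated Weitzenb\"ock formula and invoking the pointwise normalization $|\psi|^2\equiv c$ coming from the unit spinor bundle setting preceding Definition~3.2.

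For the first identity, I will trace the tensor equation
\[
\Ric + \mathcal{L}_{\frac{1}{2}\nabla f - \frac{1}{\tau}V}g - \frac{2}{\tau}S - \frac{\lambda}{2\tau}g = 0
\]
using $\tr_g \mathcal{L}_X g = 2\,\div X$ to obtain
\[
R + \Delta f - \frac{2}{\tau}\div V - \frac{2}{\tau}\Tr_g S - \frac{n\lambda}{2\tau} = 0.
\]
Integrating against $d\Omega$, the weighted integration-by-parts identities $\int\Delta f\, d\Omega = \int|\nabla f|^2\, d\Omega$ and $\int\div V\, d\Omega = \int\langle V,\nabla f\rangle\, d\Omega$, combined with $R_f = R + 2\Delta f - |\nabla f|^2$, collapse the first two terms into $\int R_f\, d\Omega$. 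What remains is to evaluate $\int (\Tr_g S + \langle V,\nabla f\rangle)\, d\Omega$, for which I will establish the pointwise identity
\[
\Tr_g S + \langle V,\nabla f\rangle \;=\; 2|D_f\psi|^2.
\]
To verify this, I expand $\Tr_g S = 2\,\textup{Re}\langle D_f\psi, D\psi\rangle$ from the definition \eqref{eq:S}, substitute $D\psi = D_f\psi + \tfrac{1}{2}\nabla f\cdot\psi$, and cancel the resulting cross-term $\textup{Re}\langle D_f\psi,\nabla f\cdot\psi\rangle$ against the contribution of $\langle V,\nabla f\rangle = \textup{Re}\langle\psi,\nabla f\cdot D_f\psi\rangle = -\textup{Re}\langle D_f\psi,\nabla f\cdot\psi\rangle$, where the last equality uses skew-adjointness of Clifford multiplication by the real vector $\nabla f$ (as in Lemma~\ref{lma:realVF}). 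Substituting back yields the first identity.

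For the second identity, I start from the integrated weighted Weitzenb\"ock formula
\[
\int_M |D_f\psi|^2\, d\Omega = \int_M |\nabla\psi|^2\, d\Omega + \frac{1}{4}\int_M R_f\,|\psi|^2\, d\Omega,
\]
which is obtained by pairing \eqref{eq:f-Weitzenbock} with $\psi$ and integrating against $d\Omega$, using self-adjointness of $D_f$ with respect to $e^{-f}d\mu_g$ and $\int\Delta_f|\psi|^2\, d\Omega = 0$. Under the pointwise constraint $|\psi|^2\equiv c$, the last term becomes $\tfrac{c}{4}\int R_f\, d\Omega$, and inserting the first identity gives
\[
\int_M |D_f\psi|^2\, d\Omega = \int_M |\nabla\psi|^2\, d\Omega + \frac{c}{4}\left(\frac{n\lambda}{2\tau} + \frac{4}{\tau}\int_M |D_f\psi|^2\, d\Omega\right),
\]
which rearranges into the claimed second identity.

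The main technical step is the pointwise algebraic identity $\Tr_g S + \langle V,\nabla f\rangle = 2|D_f\psi|^2$; it is not conceptually deep but requires careful bookkeeping with the Clifford relation $X\cdot Y + Y\cdot X = -2\langle X,Y\rangle$ and the skew-adjointness of Clifford multiplication by real vectors. The other potentially delicate point is the invocation of $|\psi|^2\equiv c$ pointwise to replace $\int R_f|\psi|^2\, d\Omega$ by $c\int R_f\, d\Omega$; this step relies on the fact that the relevant critical points are those sitting inside the unit spinor bundle, as discussed in the remark preceding the definition of the spinorial Ricci flow.
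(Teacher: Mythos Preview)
Your proof is correct and follows the same overall strategy as the paper (trace the stationary metric equation, integrate against $d\Omega$, and combine with the integrated Weitzenb\"ock identity), but you start from a different---and in some respects cleaner---form of the critical-point equation.

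The paper traces the \emph{rewritten} soliton equation from Proposition~\ref{lma:monotonicity_A},
\[
\Bigl(1-\tfrac{c}{\tau}\Bigr)\Ric_f - \tfrac{\lambda}{2\tau}g = \tfrac{2}{\tau}\div_f T_\psi + \tfrac{4}{\tau}\langle\nabla\psi\otimes\nabla\psi\rangle,
\]
which already absorbs $|\psi|^2\equiv c$ via \eqref{eq:div_fT}; integrating yields $(1-\tfrac{c}{\tau})\int R_f\,d\Omega - \tfrac{n\lambda}{2\tau} = \tfrac{4}{\tau}\int|\nabla\psi|^2\,d\Omega$, and only after substituting the Weitzenb\"ock relation $\int|\nabla\psi|^2 = \int|D_f\psi|^2 - \tfrac{c}{4}\int R_f$ does the first identity emerge. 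You instead trace the \emph{unrewritten} equation $\Ric + \mathcal{L}_{\frac{1}{2}\nabla f - \frac{1}{\tau}V}g - \tfrac{2}{\tau}S - \tfrac{\lambda}{2\tau}g = 0$ and establish the pointwise algebraic identity $\Tr_g S + \langle V,\nabla f\rangle = 2|D_f\psi|^2$, which delivers the first identity directly without invoking $|\psi|^2\equiv c$ at all. This is a mild but genuine advantage: your derivation shows that the first identity holds for any critical point satisfying the metric Euler--Lagrange equation of Proposition~\ref{prop:EL_constraint}, with the pointwise normalization entering only in the second identity. The paper's route, by contrast, entangles the constant $c$ from the outset and requires undoing that entanglement algebraically.
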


\begin{proof}
	Taking trace on the first twisted Ricci soliton equation and note that $\tr(\Ric_f) = R + \Delta f = R_f - \Delta_ff$, and integrate, we get
	\begin{align*}
		\Bigl(1-\frac{c}{\tau}\Bigr)\int_M R_f d\Omega -\frac{n\lambda}{2\tau} = \frac{4}{\tau}\int_M |\nabla\psi|^2 d\Omega.
	\end{align*}
	Next, we have
	\begin{align*}
		\int_M |\nabla\psi|^2 d\Omega &= \int_M |D_f\psi|^2 d\Omega - \frac{c}{4}\int_M R_f d\Omega.
	\end{align*}
	This implies the identities.
\end{proof}

\begin{corollary}\label{cor:regimes}
Let $(g,f,\psi)$ be a critical point of $\mathbb{W}_\lambda$ on the constraint space
\[
\left\{(g,f,\psi):\int_M d\Omega_{g,f,\tau}=1,\;|\psi|^2\equiv c\right\},
\]
with $\tau>0$ fixed and $c>0$. Then the identities in Lemma~\ref{lem:critical_point} imply:

\begin{enumerate}
\item[(i)] \textbf{(Fully forward parabolic regime $c>\tau$.)}
One necessarily has $\lambda\le 0$. Moreover, if $\lambda=0$ then
\[
\nabla\psi\equiv 0
\qquad\text{and}\qquad
D_f\psi\equiv 0.
\]
In particular $\nabla f\equiv 0$, and hence $\Ric_f=\Ric$; plugging into the stationary equation
forces $\Ric\equiv 0$. Hence the only steady critical points in the fully forward regime are Ricci-flat with parallel spinor and constant $f$.

\item[(ii)] \textbf{(Degenerate regime $c=\tau$.)}
One necessarily has $\lambda\le 0$, and in fact
\[
\int_M |\nabla\psi|^2\,d\Omega \;=\; -\,\frac{n}{8}\,\lambda.
\]
In particular, if $\lambda=0$ then $\nabla\psi\equiv 0$.

\item[(iii)] \textbf{(Backward--forward regime $c<\tau$.)}
One has the identity
\[
\int_M |D_f\psi|^2\,d\Omega
=\frac{1}{1-\frac{c}{\tau}}
\left(\frac{nc\lambda}{8\tau}+\int_M |\nabla\psi|^2\,d\Omega\right),
\]
and therefore the necessary condition
\[
\frac{nc\lambda}{8\tau}+\int_M |\nabla\psi|^2\,d\Omega \;\ge\; 0.
\]
In particular, if $\lambda>0$ (shrinker sign) then
\[
\int_M |D_f\psi|^2\,d\Omega \;\ge\; \frac{nc\lambda}{8(\tau-c)} \;>\;0.\]
\end{enumerate}

In particular, \emph{shrinking} critical points ($\lambda>0$) can only occur in the backward--forward
regime $c<\tau$, and never in the fully forward regime $c>\tau$ (nor in the degenerate case $c=\tau$).
\end{corollary}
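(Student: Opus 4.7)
The whole corollary reads off the second identity of Lemma~\ref{lem:critical_point}, rearranged as
\[
\Bigl(1-\tfrac{c}{\tau}\Bigr)\int_M |D_f\psi|^2\,d\Omega
= \frac{nc\lambda}{8\tau} + \int_M |\nabla\psi|^2\,d\Omega. \qquad (\star)
\]
Both integrals on the right are nonnegative, so the sign of the coefficient $1-c/\tau$ dictates the admissible sign of $\lambda$, and the three regimes of the corollary correspond exactly to the trichotomy $c\gtreqqless\tau$. My plan is to read off each case directly from $(\star)$, and then to carry out one extra geometric argument in case~(i) that upgrades vanishing integrals to pointwise conclusions.

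I would first dispatch the two easy cases. For case~(iii), $1-c/\tau>0$ allows me to divide $(\star)$ to obtain the stated formula for $\int_M|D_f\psi|^2\,d\Omega$; its nonnegativity yields the necessary inequality, and in the shrinker subcase $\lambda>0$ dropping the nonnegative $|\nabla\psi|^2$-term produces the quantitative lower bound $nc\lambda/(8(\tau-c))$. For case~(ii), the coefficient on the left of $(\star)$ vanishes, so $(\star)$ reduces to $\int_M|\nabla\psi|^2\,d\Omega = -n\lambda/8$, which immediately forces $\lambda\le 0$, with equality precisely when $\nabla\psi\equiv 0$.

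The main work sits in case~(i). Since $1-c/\tau<0$, the left-hand side of $(\star)$ is nonpositive, forcing $\tfrac{nc\lambda}{8\tau}+\int_M|\nabla\psi|^2\,d\Omega\le 0$ and hence $\lambda\le 0$. When $\lambda=0$ both sides of $(\star)$ must vanish, giving simultaneously $\nabla\psi\equiv 0$ and $D_f\psi\equiv 0$ pointwise. Combining these with the definition $D_f\psi=D\psi-\tfrac{1}{2}\nabla f\cdot\psi$ and with $D\psi\equiv 0$ (since $\nabla\psi\equiv 0$) yields $\nabla f\cdot\psi\equiv 0$. The pointwise upgrade $\nabla f\equiv 0$ then follows from the standard fact that Clifford multiplication by a nonzero real tangent vector is an isomorphism of the spinor fibre (its inverse is $-v/|v|^2$), together with the constraint $|\psi|^2\equiv c>0$; I expect this Clifford-injectivity step to be the only nonroutine point of the argument. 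With $f$ constant one has $\Ric_f=\Ric$, and substituting $\lambda=0$ and $\nabla\psi\equiv 0$ into the stationary equation of Proposition~\ref{lma:monotonicity_A} kills both $\div_f T_\psi$ and $\langle\nabla\psi\otimes\nabla\psi\rangle$, leaving $(1-c/\tau)\Ric\equiv 0$; since $c\neq\tau$, this forces $\Ric\equiv 0$.

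The final assertion about shrinkers is then just a logical compilation: cases~(i) and~(ii) together rule out $\lambda>0$ whenever $c\ge\tau$, so shrinking critical points can only appear in the backward--forward regime $c<\tau$. Apart from the Clifford-multiplication injectivity step in case~(i), the entire proof amounts to algebraic manipulation of the single scalar identity $(\star)$ together with one substitution into the twisted-soliton equation.
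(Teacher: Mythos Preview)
Your proposal is correct and matches the paper's proof essentially step for step: both arguments pivot on the single identity $(\star)$ from Lemma~\ref{lem:critical_point}, read off the trichotomy $c\gtreqqless\tau$, and in case~(i) pass from $\nabla f\cdot\psi=0$ to $\nabla f\equiv 0$ via the nonvanishing of $\psi$ before collapsing the stationary metric equation to $\Ric\equiv 0$. Your write-up is slightly more explicit about the Clifford-injectivity step and about which terms of the twisted-soliton equation die, but there is no substantive difference in approach.
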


\begin{proof}
Let
\[
A:=\int_M |D_f\psi|^2\,d\Omega\ge 0,\qquad
B:=\int_M |\nabla\psi|^2\,d\Omega\ge 0.
\]
Lemma~\ref{lem:critical_point} gives
\[
\left(1-\frac{c}{\tau}\right)A=\frac{nc\lambda}{8\tau}+B.
\]
If $c>\tau$, then $1-\frac{c}{\tau}<0$ so the left-hand side is $\le 0$, hence the right-hand side
is $\le 0$, which forces $\lambda\le 0$. If moreover $\lambda=0$, then
$\left(1-\frac{c}{\tau}\right)A=B$ with negative coefficient, forcing $A=B=0$, i.e.
$\nabla\psi\equiv 0$ and $D_f\psi\equiv 0$. Since $\nabla\psi\equiv 0$ implies $D\psi\equiv 0$,
we have $0 =D_f\psi=D\psi-\frac12\nabla f\cdot\psi=-\frac12\nabla f\cdot\psi$, and because
$|\psi|^2\equiv c>0$ this implies $\nabla f\equiv 0$. The stationary metric equation then reduces
to $\Ric\equiv 0$ when $\lambda=0$.

If $c=\tau$, then the same identity reduces to $0=\frac{nc\lambda}{8\tau}+B$, giving $\lambda\le 0$
and $B=-\frac{nc\lambda}{8\tau}=-\frac{n}{8}\lambda$, hence $\nabla\psi\equiv 0$ when $\lambda=0$.

If $c<\tau$, then $1-\frac{c}{\tau}>0$ and solving for $A$ yields the stated formula and the
inequality $\frac{nc\lambda}{8\tau}+B\ge 0$. If $\lambda>0$, then $A\ge \frac{1}{1-c/\tau}\cdot
\frac{nc\lambda}{8\tau}=\frac{nc\lambda}{8(\tau-c)}$.
\end{proof}

\color{black}

\section{Short-time existence and uniqueness of Spinorial Ricci flow}

We next establish the short-time existence and uniqueness of the spinorial Ricci flow introduced in the previous section.

\begin{theorem}[Short-time existence and uniqueness]\label{thm;short-time-existence}
Let $(M,g_0)$ be a closed Riemannian spin manifold. Fix $\tau>0$, a smooth function
$f_0\colon M\to\mathbb{R}$, and a spinor $\psi_0\in\mathcal{S}$ of constant length
$|\psi_0|^2\equiv c$.
\begin{itemize}
\item[(i)] \emph{(Backward--forward parabolic regime.)}
If $c<\tau$, then there exists $\varepsilon=\varepsilon(g_0,f_0,\psi_0,\tau)>0$
such that the spinorial Ricci flow system \eqref{eq:coupled_dg}--\eqref{eq:coupled_dpsi}
admits a unique solution $(g(t),f(t),\psi(t))$ on $t\in[0,\varepsilon)$ satisfying
\[
g(0)=g_0,\qquad \psi(0)=\psi_0,\qquad f(\varepsilon)=f_0.
\]
In this case, after DeTurck gauge-fixing, the $(g,\psi)$-equations are uniformly
forward parabolic, while the $f$-equation is uniformly \emph{backward} parabolic.

\item[(ii)] \emph{(Fully forward parabolic regime.)}
If $c>\tau$, then there exists $\varepsilon=\varepsilon(g_0,f_0,\psi_0,\tau)>0$
such that the spinorial Ricci flow system \eqref{eq:coupled_dg}--\eqref{eq:coupled_dpsi}
admits a unique solution $(g(t),f(t),\psi(t))$ on $t\in[0,\varepsilon)$ satisfying
\[
g(0)=g_0,\qquad f(0)=f_0,\qquad \psi(0)=\psi_0.
\]
In this case, after DeTurck gauge-fixing, the system is uniformly \emph{forward
parabolic} in all variables $(g,f,\psi)$.
\end{itemize}
\end{theorem}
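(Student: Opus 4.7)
The plan is to follow the DeTurck-trick strategy outlined in the introduction. First, I introduce the gauge vector field $X = W(g, g_0) - \frac{2}{\tau}V_f + \nabla f$, where $W(g, g_0)^k = g^{ij}(\Gamma^k_{ij}(g) - \Gamma^k_{ij}(g_0))$ is the standard Hamilton--DeTurck vector field, and modify the system by adding $\mathcal{L}_X g$ to the metric equation, $\langle X, \nabla f\rangle$ to the scalar equation, and the spinorial Kosmann Lie derivative $\mathcal{L}^{\mathrm{Kos}}_X\psi$ to the spinor equation. The $W(g,g_0)$-contribution converts $-2\mathrm{Ric}$ into a Laplace-type principal part $g^{pq}\partial_p\partial_q g_{ij}$, while the $\frac{1}{2}\nabla f - \frac{1}{\tau}V_f$ component of $X$ cancels the Lie-derivative terms in \eqref{eq:coupled_dg}; a direct computation must be carried out to verify that all second derivatives of $f$ and $\psi$ drop out of the gauged $g$-equation, leaving a strictly parabolic equation in $g$ alone at the top order.

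Next, I compute the principal symbols of the gauged system. Using the pointwise constraint $|\psi_0|^2 \equiv c$, which is preserved by Proposition~\ref{lma:monotonicity_A}, the decomposition $V_f = U + \frac{c}{2}\nabla f$ from the preceding remark gives
\[
\frac{2}{\tau}\mathrm{div}(V_f) = \frac{c}{\tau}\Delta f + \frac{2}{\tau}\mathrm{div}(U),
\]
so the gauged $f$-equation has principal part $-(1 - c/\tau)\Delta f$, strictly forward parabolic when $c > \tau$ and strictly backward parabolic when $c < \tau$. The gauged spinor equation becomes $\partial_t\psi = \Delta_f\psi + \text{l.o.t.}$ after absorbing the Kosmann correction, uniformly forward parabolic in $\psi$ with at most second-order dependence on $g$ and first-order dependence on $\psi$ in the remaining coupling. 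Thus the system is triangular at highest order: $g$ is self-contained, $\psi$ depends on $g$, and $f$ depends on $(g,\psi)$.

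With this triangular structure, I would run a Banach fixed-point iteration in parabolic H\"older spaces $C^{k+\alpha,(k+\alpha)/2}$ on a short interval $[0,T]$. Given an iterate $(g^{(n)}, f^{(n)}, \psi^{(n)})$, one solves successively the linearized strictly parabolic $g$-equation for $g^{(n+1)}$, then substitutes to solve the strictly parabolic $\psi$-equation for $\psi^{(n+1)}$, then finally the parabolic $f$-equation for $f^{(n+1)}$. In Case (ii) ($c > \tau$) the $f$-step is a forward initial-value problem with datum $f_0$ at $t = 0$; in Case (i) ($c < \tau$) the $f$-step is backward parabolic, which after the time-reversal $s = \varepsilon - t$ becomes a forward initial-value problem with datum $f_0$ at $s = 0$, i.e.\ the terminal condition $f(\varepsilon) = f_0$. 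At each stage, parabolic Schauder estimates combined with the short-time interpolation bound of the form $\|u\|_{C^{k+\alpha}} \le C T^{\alpha/2}\|u\|_{C^{k+2,(k+2)/2}_T}$ yield a factor $T^{\alpha/2}$ in the difference estimate, giving contraction for sufficiently small $T = \varepsilon$. Uniqueness follows from the same estimate applied to the difference of two solutions with identical data. Once the gauged solution exists, one integrates $-X$ to obtain time-dependent diffeomorphisms $\Phi_t$ with $\Phi_0 = \mathrm{id}$, and pulls back $(g, f, \psi)$ by $\Phi_t$; by construction of $X$ and the spinorial Kosmann lift, the pullback satisfies \eqref{eq:coupled_dg}--\eqref{eq:coupled_dpsi}.

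The main obstacle I foresee is twofold. Algebraically, one must verify that the Kosmann spinorial correction $\mathcal{L}^{\mathrm{Kos}}_X\psi$, which a priori involves first derivatives of $X$ and hence second derivatives of $g$, indeed preserves the strict parabolicity of the $\psi$-equation and does not break the triangular cascade (in particular, it must not reintroduce second derivatives of $f$ into either the $g$- or $\psi$-equation). Analytically, the handling of Case (i) requires the triangular structure to be exact at the top order, since only then can one legitimately solve the forward $(g,\psi)$-subsystem on $[0,\varepsilon]$ and feed the result as coefficients into a terminal-value problem for the backward parabolic $f$-equation; any genuine top-order coupling from $f$ back into $(g,\psi)$ would make this decoupling fail and require a fundamentally different (ill-posed) framework.
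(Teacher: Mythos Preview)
Your proposal is correct and follows essentially the same approach as the paper: the same DeTurck gauge $X=W(g,g_0)-\tfrac{2}{\tau}V_f+\nabla f$, the same triangular symbol structure, and the same Banach fixed-point scheme in parabolic H\"older spaces solving $g\to\psi\to f$ in order, with the $f$-equation run forward or backward according to the sign of $1-c/\tau$. One small correction worth noting: the gauged spinor equation does \emph{not} have principal part exactly $\Delta_f\psi$; the Kosmann term $-\tfrac{2}{\tau}\mathcal{L}^{\mathrm{spin}}_U\psi$ contributes a genuine second-order piece in $\psi$, so the principal operator is $A(\psi)^{kl}\hat\nabla_k\hat\nabla_l$ with $A(\psi)^{kl}(s)=g^{kl}s+\tfrac{1}{\tau}\mathrm{Re}\langle\psi,e_a\!\cdot e_l\!\cdot s\rangle\,e_a\!\cdot e_k\!\cdot\psi$, whose symbol $-|\xi|^2|s|^2-\tfrac{1}{\tau}\sum_a\mathrm{Re}\langle\psi,e_a\!\cdot\xi\!\cdot s\rangle^2$ is still coercive---so your conclusion of strict parabolicity stands, but the verification you flag as an obstacle is slightly more delicate than ``absorbing'' the correction into lower order.
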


\begin{remark}
The distinction between the cases $c<\tau$ and $c>\tau$ reflects a genuine change
in the analytic character of the flow, corresponding respectively to a mixed
forward--backward system and a fully forward parabolic system.
\end{remark}

We will prove this theorem in the remainder of this section. 
Using $g_0$ as a fixed  background metric, we set the DeTurck vector field
	\[	W(g,g_0)^k = g^{ij}(\Gamma_{ij}^k(g) - \Gamma_{ij}^k(g_0)).\]
We note that provided $|\psi|$ is a constant, the vector field $V_f$ is given by $V_f^i = \text{Re}\langle\psi, e_i\cdot D\psi\rangle + \frac{1}{2}df(e_i)|\psi|^2$. Hence let us define a vector field $U$ by $U = \text{Re}\langle\psi, e_i\cdot D\psi\rangle e_i$.   Consider the following gauged-system:

\begin{align}
\label{eq:coupled_dg_gauged}\D{g}{t} & = -2\Ric + \mathcal{L}_{W(g,g_0)}g   + \frac{4}{\tau}S_{g,f,\psi} + \frac{\lambda}{\tau}g\\
\label{eq:coupled_du_gauged}\D{f}{t} & =   -\left( 1 - \frac{|\psi|^2}{\tau}\right)\Delta f - R + \frac{\lambda n}{2\tau} +\frac{4}{\tau} \Tr_g S_{g,f,\psi}\\
& \hskip 1cm + \frac{2}{\tau}\div\, U + (1-\frac{|\psi|^2}{\tau})|\nabla f|^2 + \langle \nabla f, W - \frac{2}{\tau}U\rangle \nonumber\\
\label{eq:coupled_dpsi_gauged}\D{\psi}{t} & =  \Delta_f \psi +\frac{|\nabla\psi|^2}{|\psi|^2}\psi  +(1-\frac{|\psi|^2}{\tau})\nabla_{\nabla f}\psi + \mathcal{L}_{W- \frac{2}{\tau}U}^{\text{spin}}\psi. 
\end{align}

Here $\mathcal{L}_X^{\text{spin}}$ denotes the spinorial (Kosmann) Lie derivative along $X$:
	\[	\mathcal{L}_X^{\text{spin}} \psi = \nabla_X\psi - \frac{1}{4}(\nabla_iX_j - \nabla_jX_i)e_i\cdot e_j\cdot\psi.\]
Observe that \eqref{eq:coupled_du_gauged} is forward parabolic if $c>\tau$; backward parabolic if $c<\tau$. Moreover, Observe that the modified flow still evolves inside the unit-spinor bundle. Indeed, if we start with $|\psi_0|^2 \equiv c>0$ point-wise on $M$, then by metric-compatibility we have 
\begin{equation}\label{eqn;spinor_norm_preserved_under_gauge}
	\begin{aligned}
			\frac{\partial}{\partial t}|\psi|^2 &= 2\textup{Re}\langle \dot\psi, \psi\rangle\\
	&= 2\textup{Re}\langle \Delta_f\psi, \psi\rangle + \frac{|\nabla\psi|^2}{|\psi|^2}2\textup{Re}\langle \psi, \psi\rangle + 2\textup{Re}\langle \mathcal{L}_{X}^{\text{spin}} \psi, \psi\rangle\\
	&= \Delta_f|\psi|^2 + X|\psi|^2,
	\end{aligned}
\end{equation}
hence $|\psi|^2 \equiv |\psi_0|^2$ as long as the flow exists.
\medskip

\subsection{Symbol calculation}

Before we show short-time existence, let us first compute the principal symbol of the system. Recall the definition of principal symbol of a linear operator $L$ of order $k$:
	\[	\sigma_{\xi}(L)u = \frac{i^k}{k!}L(f^k u)(x),\]
where $f\in C^{\infty}(M)$ with $f(x) = 0, df_x = \xi$.

Let us first recall that

\begin{lemma}\label{lem;Ricci;symbols}
	Let $(\dot{g}, \dot{\psi}) = (\eta, s)$. We have
		\begin{align*}
			&\sigma_{\xi}(D_{(g,\psi)}\Ric(g))_{ab}(\eta,s) = \frac{1}{2}|\xi|^2\eta(e_a,e_b) - \frac{1}{2}\Big(\xi_i\xi_a\,\eta(e_b,e_i) +  \xi_i\xi_b\,\eta(e_a,e_i) - \xi_a\xi_b\,\tr(\eta)\Big),\\[1ex]
			&\sigma_{\xi}(D_{(g,\psi)}R_g)(\eta,s) = |\xi|^2\tr(\eta) - \eta(\xi,\xi),\\[1ex]
			&\sigma_{\xi}(D_{(g,\psi)}(\mathcal{L}_{W(g,g_0)}g))_{ab}(\eta,s) = -\Big(\xi_i\xi_a\,\eta(e_b,e_i) +  \xi_i\xi_b\,\eta(e_a,e_i) - \xi_a\xi_b\,\tr(\eta)\Big).
		\end{align*}
\end{lemma}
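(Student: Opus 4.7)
The plan is to observe that $\Ric(g)$, $R_g$, and $\mathcal{L}_{W(g,g_0)}g$ are second-order differential operators depending only on $g$, so the spinor component $s$ drops out of all three symbols and we may work with $\eta=\dot g$ throughout. Under the stated convention $\sigma_\xi(L)u=\frac{i^k}{k!}L(f^k u)(x)$, we have $\sigma_\xi(\nabla_i)=i\xi_i$ and $\sigma_\xi(\nabla_i\nabla_j)=-\xi_i\xi_j$, so it suffices to linearize each operator and identify the coefficient of the two top-order derivatives of $\eta$.

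For the first identity, I would invoke the classical linearization of the Ricci tensor,
\[
D\Ric(g)(\eta)_{ab} \;=\; -\tfrac12\nabla^k\nabla_k\eta_{ab} + \tfrac12\bigl(\nabla^k\nabla_a\eta_{kb}+\nabla^k\nabla_b\eta_{ka}-\nabla_a\nabla_b\tr\eta\bigr)+\text{l.o.t.},
\]
(cf.\ Besse's book or Chow--Knopf), and read off the symbol directly by the substitution $\nabla_i\nabla_j\mapsto-\xi_i\xi_j$. Tracing against $g^{ab}$ --- equivalently, using $DR_g(\eta)=-\Delta\tr\eta+\nabla^i\nabla^j\eta_{ij}+\text{l.o.t.}$ --- then yields the second identity, since the residual $\langle\Ric,\eta\rangle$ contribution is of order zero in $\eta$ and invisible to $\sigma_\xi$.

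For the third identity, the key observation is that $W(g,g_0)^k=g^{ij}(\Gamma^k_{ij}(g)-\Gamma^k_{ij}(g_0))$ is a first-order operator in $g$, and only the piece $g^{ij}\delta\Gamma^k_{ij}$ contributes to its principal part; variations of the coefficient $g^{ij}$ and of the background shift $-\Gamma^k_{ij}(g_0)$ produce only terms of order $\le 0$ in $\eta$. Using the tensorial identity $\delta\Gamma^k_{ij}=\tfrac12 g^{kl}(\nabla_i\eta_{jl}+\nabla_j\eta_{il}-\nabla_l\eta_{ij})$ and lowering the index, one obtains $(\delta W)_b=\nabla^i\eta_{ib}-\tfrac12\nabla_b\tr\eta + \text{l.o.t.}$, whose principal symbol is $i\bigl(\xi^i\eta_{ib}-\tfrac12\xi_b\tr\eta\bigr)$. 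Composing with the outer $\nabla_a$, $\nabla_b$ in $\mathcal{L}_W g=\nabla_a W_b+\nabla_b W_a$ and symmetrizing in $(a,b)$ then produces the claimed expression.

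The computation poses no substantive analytic obstacle; the only real care required is bookkeeping --- isolating the terms that genuinely carry two derivatives of $\eta$ from the many lower-order terms generated by varying $g$-dependent coefficients (the inverse metric inside $W^k$, the affine shift $-\Gamma^k_{ij}(g_0)$, and the connection coefficients hidden inside the outer covariant derivatives of $\nabla_a W_b$). Working in a coordinate system normal at the base point $x$, so that $\Gamma^k_{ij}(g)(x)=0$ and covariant and ordinary derivatives of $\eta$ agree at $x$, would cleanly isolate the principal parts and streamline the verification of the three displayed identities.
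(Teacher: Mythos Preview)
Your argument is correct. The paper does not actually prove this lemma: it is introduced with ``Let us first recall that'' and stated without proof, as these symbol computations for $\Ric$, $R$, and the DeTurck term are classical (they appear, e.g., in the Ricci-flow literature you cite). Your derivation via the standard linearization formulae for $\Ric$ and $R$, together with $\delta\Gamma^k_{ij}=\tfrac12 g^{kl}(\nabla_i\eta_{jl}+\nabla_j\eta_{il}-\nabla_l\eta_{ij})$ for the DeTurck vector field, is exactly the expected computation, and your handling of the symbol convention $\sigma_\xi(\nabla_i\nabla_j)=-\xi_i\xi_j$ is consistent with the paper's definition $\sigma_\xi(L)u=\frac{i^k}{k!}L(f^ku)(x)$. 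In short, you have supplied a clean proof where the paper simply quotes the result.
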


\begin{lemma}\label{lem;symbol_2}
	Let $(\dot{g}, \dot{\psi}) = (\eta, s)$. We have
	\begin{align*}
		\sigma_{\xi}\left(  D_{(g,\psi)} \mathcal{L}_{U}^{\text{spin}}\psi \right)(\eta,s)
		&  = -\frac{1}{16}\sum_{i,a,b}\sum_{j\neq k}\, \xi_a\xi_j\, \eta(e_i, e_k)\, \textup{Re}\langle \psi,\, e_b\cdot e_i\cdot e_j\cdot e_k\cdot\psi\rangle \, e_a\cdot e_b\cdot \psi\\[1ex]
		&\quad  + \frac{1}{16}\sum_{i,a,b}\sum_{j\neq k}\, \xi_b\xi_j\, \eta (e_i, e_k)\, \textup{Re}\langle \psi,\, e_a\cdot e_i\cdot e_j\cdot e_k\cdot\psi\rangle \, e_a\cdot e_b\cdot\psi\\[1ex]
		&\quad -  \frac{1}{4}\sum_{a, b}\xi_b\, \textup{Re}\langle\psi,\, e_a\cdot \xi\cdot s\rangle \, e_a\cdot e_b\cdot \psi +  \frac{1}{4}\sum_{a, b} \xi_a\, \textup{Re}\langle\psi,\, e_b\cdot \xi\cdot s\rangle  \, e_a\cdot e_b\cdot \psi,\\[1ex]
		\sigma_{\xi}\left( D_{(g,\psi)} \mathcal{L}_{W(g,g_0) }^{\text{spin}}\psi \right)(\eta,s) &= -\frac{1}{4} \sum_k\sum_{i\neq j}(\xi_i\xi_k\,\eta(e_j, e_k) - \xi_j\xi_k\eta(e_i, e_k))e_i\cdot e_j\cdot \psi,\\[1ex]
	\sigma_{\xi}\left( D_{(g,\psi)}\Delta_g\psi \right)(\eta,s) &= -\frac{1}{4}\sum_i\sum_{j\neq k}\xi_i\xi_j\, \eta(e_i,e_k)e_j\cdot e_k\cdot\psi - |\xi|^2 s,\\[1ex]
	\sigma_{\xi}\left( D_{(g,\psi)}\div_g\, U \right)(\eta,s) &=  \frac{1}{4}\sum_i\sum_{j\neq k}\xi_i\xi_j\, \eta(e_i,e_k)\, \text{Re}\langle\psi,\,e_j\cdot e_k\cdot\psi\rangle + |\xi|^2\,\text{Re}\langle\psi, s\rangle\\
	&\quad  + \frac{1}{4}|\xi|^2\tr(\eta)\,|\psi|^2 -  \frac{1}{4}\eta(\xi,\xi)\,|\psi|^2.
	\end{align*}

\end{lemma}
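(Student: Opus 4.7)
The plan is to compute each of the four symbols by the standard procedure: linearize the nonlinear differential operator at $(g,\psi)$ in direction $(\eta,s)$, extract the part that is of total order two in derivatives of $(\eta,s)$, and then apply the symbol substitution $\nabla_i \leadsto i\xi_i$ with the normalization coming from the definition $\sigma_\xi(L)u=\tfrac{i^k}{k!}L(f^k u)$. I would work at a base point $x_0$ in normal coordinates for the background metric $g$, so that the Christoffel symbols and the spin connection $1$-form of $g$ vanish at $x_0$; only their variations in direction $\eta$ then feed into the top-order terms, which significantly shortens the bookkeeping.

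For $\sigma_\xi(D_{(g,\psi)}\Delta_g\psi)$ I would split into two contributions. The $s$-variation yields the usual Laplace symbol $-|\xi|^2 s$. The $\eta$-variation enters through the spin connection $1$-form $\omega_i=\tfrac14\,\omega_{i,jk}\,e_j\cdot e_k$, whose linearization is first order in $\eta$ via $\delta\Gamma^k_{ij}(\eta)=\tfrac12 g^{kl}(\nabla_i\eta_{jl}+\nabla_j\eta_{il}-\nabla_l\eta_{ij})$; one more derivative and the symbol substitution then yield the stated $\xi_i\xi_j\,\eta(e_i,e_k)\,e_j\cdot e_k\cdot\psi$ structure, with the restriction $j\ne k$ automatic from $e_j\cdot e_j=-1$. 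The symbol of $\mathcal{L}^{\text{spin}}_{W(g,g_0)}\psi$ is treated analogously: since $W^k=g^{ij}(\Gamma^k_{ij}(g)-\Gamma^k_{ij}(g_0))$ is already first order in $g$, the $\nabla_X$ piece of the Kosmann formula is at most first order in $\eta$ and drops out of the principal symbol, while the antisymmetric curl piece $-\tfrac14(\nabla_iW_j-\nabla_jW_i)\,e_i\cdot e_j\cdot\psi$ produces the listed expression after substituting $\delta W(\eta)$ and symbolifying.

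The more delicate symbol is that of $D_{(g,\psi)}\mathcal{L}_U^{\text{spin}}\psi$, because $U^k=\textup{Re}\langle\psi,\,e_k\cdot D\psi\rangle$ depends on $g$ (through the Clifford frame and the Dirac operator) \emph{and} on $\psi$ at order one through $D\psi$, so that $\mathcal{L}_U^{\text{spin}}\psi$ is intrinsically second order in $\psi$. The Kosmann curl term $-\tfrac14(\nabla_iU_j-\nabla_jU_i)\,e_i\cdot e_j\cdot\psi$ is the sole source of top-order symbols. Within $\delta U$, the $\eta$-linearization of $D\psi$ contributes $\tfrac12\,\delta\Gamma(\eta)\cdot\psi$, whose additional derivative produces the $\xi_a\xi_j\,\eta(e_i,e_k)$ Clifford tensors of the first two lines of the stated formula, while the $s$-linearization contributes $\textup{Re}\langle\psi,\,e_i\cdot Ds\rangle$, which after one more derivative and symbolification yields the $\textup{Re}\langle\psi,\,e_a\cdot\xi\cdot s\rangle$ terms of the last two lines. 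For $\sigma_\xi(D_{(g,\psi)}\div_g U)$ one applies a single $\nabla_i$ to $\delta U^i$; the same two families of tensors reappear, and the additional scalar terms $|\xi|^2\textup{Re}\langle\psi,s\rangle$ together with $\tfrac14|\xi|^2\tr(\eta)|\psi|^2-\tfrac14\eta(\xi,\xi)|\psi|^2$ arise from differentiating the contraction $g^{ij}\nabla_i U_j$ in direction $\eta$.

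The main obstacle I foresee is purely bookkeeping: keeping the signs, the antisymmetrization of the Clifford indices, and the combinatorial prefactors (in particular the $\tfrac{1}{16}=\tfrac14\cdot\tfrac14$ coming from the product of the normalizations in the Kosmann formula and in $\omega$) consistent with the stated expressions. A useful consistency check is to specialize to either $s=0$ or $\eta=0$ and recognize each surviving sub-piece as a known tensor/spinor symbol, and to verify that the reality imposed by the $\textup{Re}\langle\cdot,\cdot\rangle$ pairings is preserved at every step.
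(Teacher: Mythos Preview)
Your plan is correct and essentially mirrors the paper's proof for the first three symbols: linearize, keep only the top-order part (which for the Kosmann derivative comes exclusively from the curl term $-\tfrac14(\nabla_i X_j-\nabla_j X_i)e_i\cdot e_j\cdot\psi$), and symbolify. The paper organizes the $\eta$-variation of the spin connection by invoking the variation formula for $\nabla\psi$ from \cite{AWW}, which is exactly the content of your $\delta\Gamma^k_{ij}(\eta)$ recipe.

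The one genuine divergence is in the treatment of $\div_g U$. The paper does \emph{not} compute $\nabla_i(\delta U^i)$ directly. Instead it observes the Weitzenb\"ock-type identity
\[
\div_g U \;=\; \textup{Re}\langle\psi, D^2\psi\rangle \;=\; -\,\textup{Re}\langle\psi,\Delta\psi\rangle \;+\; \tfrac14\,R\,|\psi|^2,
\]
and then simply reads off the symbol from the already-established symbols of $\Delta_g\psi$ and $R_g$: the $-\textup{Re}\langle\psi,\Delta\psi\rangle$ piece gives the first two displayed terms, and $\tfrac14 R|\psi|^2$ gives $\tfrac14(|\xi|^2\tr\eta-\eta(\xi,\xi))|\psi|^2$. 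This is cleaner than a direct computation and explains immediately where those last two terms come from. In your direct route they do \emph{not} arise from ``differentiating the contraction $g^{ij}\nabla_i U_j$ in direction $\eta$'' (that variation is only first order and drops from the principal symbol); they arise from the $\eta$-variation of the spin connection hidden inside $D\psi$ in $U^a=\textup{Re}\langle\psi,e_a\cdot D\psi\rangle$. Your approach still works, but the bookkeeping is longer and the source of the scalar-curvature terms is easier to misattribute, as you did.
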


\begin{proof}
	Since we wish to compute the principal symbol at the end, we only need to worry about the linearization at top order.	We have
	\begin{align}
		\left(D_{(g,\psi)}\mathcal{L}_{U}^{\text{spin}}\psi\right)(\eta,s) & = \mathcal{L}_{(D_g U)(\eta)}^{\text{spin}}\psi  + \mathcal{L}_{(D_\psi U)(s)}^{\text{spin}}\psi + \lot
	\end{align}
	
	Note that $U^a = \text{Re}\langle\psi, e_a\cdot D\psi\rangle $. 
	Using [AWW, Lemma 4.12], we calculate, up to highest order, 
	\begin{align}
		(D_g U^a)(\eta) &= \frac{1}{4}\sum_{i}\sum_{j\neq k}\, (\nabla_{e_j}\eta)(e_i, e_k)\, \textup{Re}\langle \psi,\, e_a\cdot e_i\cdot e_j\cdot e_k\cdot\psi\rangle  + \lot
	\end{align} 
	This gives
	\begin{equation}
		\begin{aligned}
			\mathcal{L}_{(D_g U)(\eta)}^{\text{spin}}\psi &=  -\frac{1}{4}\sum_{a\neq b}(\nabla_a (D_g U^b(\eta)) - \nabla_b (D_g U^a(\eta)))\, e_a\cdot e_b\cdot \psi + \lot\\
		&= -\frac{1}{16}\sum_{i,a,b}\sum_{j\neq k}\, (\nabla_{e_a}\nabla_{e_j}\eta)(e_i, e_k)\, \textup{Re}\langle \psi,\, e_b\cdot e_i\cdot e_j\cdot e_k\cdot\psi\rangle\, e_a\cdot e_b\cdot \psi\\
		&\quad + \frac{1}{16}\sum_{i,a,b}\sum_{j\neq k}\, (\nabla_{e_b}\nabla_{e_j}\eta)(e_i, e_k)\, \textup{Re}\langle \psi,\, e_a\cdot e_i\cdot e_j\cdot e_k\cdot\psi\rangle\, e_a\cdot e_b\cdot \psi + \lot
		\end{aligned}
	\end{equation}
For the another other top order terms\, we also calculate, up to highest order,
	\begin{align}
		&(D_\psi U^a)(s) = \textup{Re}\langle\psi,\, e_a\cdot Ds\rangle + \lot
	\end{align}
	This gives
	
	\begin{equation}
		\begin{aligned}
			\mathcal{L}_{(D_{\psi} U)(s)}^{\text{spin}}\psi &=  -\frac{1}{4}\sum_{a\neq b}(\nabla_a (D_{\psi} U^b(s)) - \nabla_b (D_{\psi}  U^a(s)))\, e_a\cdot e_b\cdot \psi + \lot\\
			&= -\frac{1}{4}\sum_{a\neq b}\textup{Re}\langle\psi,\, e_a\cdot \nabla_{e_b}Ds\rangle \, e_a\cdot e_b\cdot \psi  + \frac{1}{4}\sum_{a\neq b}\textup{Re}\langle\psi,\, e_b\cdot \nabla_{e_a}Ds\rangle \, e_a\cdot e_b\cdot \psi + \lot\\
		\end{aligned}
	\end{equation}
Consequently,

	\begin{align}
		&\sigma_{\xi}\left(D_{(g,\psi)}\mathcal{L}_{U}^{\text{spin}}\psi\right)(\eta,s)\\
		&= -\frac{1}{16}\sum_{i,a,b}\sum_{j\neq k}\, \xi_a\xi_j\, \eta(e_i, e_k)\, \textup{Re}\langle \psi,\, e_b\cdot e_i\cdot e_j\cdot e_k\cdot\psi\rangle \, e_a\cdot e_b\cdot \psi\\[1ex]
		&\quad  + \frac{1}{16}\sum_{i,a,b}\sum_{j\neq k}\, \xi_b\xi_j\, \eta (e_i, e_k)\, \textup{Re}\langle \psi,\, e_a\cdot e_i\cdot e_j\cdot e_k\cdot\psi\rangle \, e_a\cdot e_b\cdot\psi\\[1ex]
		&\quad -  \frac{1}{4}\sum_{a, b}\xi_b\, \textup{Re}\langle\psi,\, e_a\cdot \xi\cdot s\rangle \, e_a\cdot e_b\cdot \psi +  \frac{1}{4}\sum_{a, b} \xi_a\, \textup{Re}\langle\psi,\, e_b\cdot \xi\cdot s\rangle  \, e_a\cdot e_b\cdot \psi.
	\end{align}

Secondly,
\begin{align*}
	D_{(g,\psi)}(\mathcal{L}_{W(g,g_0)}^{\text{spin}}\psi)(\eta,s) =  -\frac{1}{4}\sum_{i\neq j}(\nabla_i (D_g W^j(\eta)) - \nabla_j (D_g W^i(\eta)))\, e_i\cdot e_j\cdot \psi + \lot,
\end{align*}
This gives
\begin{align*}
	\sigma_{\xi}(D_{(g,\psi)}(\mathcal{L}_{W(g,g_0) }^{\text{spin}}\psi))(\eta,s) &= -\frac{1}{4}\sum_k\sum_{i\neq j}(\xi_i\xi_k\,\eta(e_j, e_k) - \xi_j\xi_k\eta(e_i, e_k))e_i\cdot e_j\cdot \psi.
\end{align*}

Next, using [AWW, Lemma 4.12] again, we have
\begin{align*}
	D_{(g,s)}(\Delta_g\psi)(\eta,s) &= \frac{1}{4}\sum_i\sum_{j\neq k}(\nabla_{e_i}\nabla_{e_j}\eta)(e_i,e_k)e_j\cdot e_k\cdot\psi + \Delta_g s + \lot
\end{align*}
This gives
\begin{align*}
	\sigma_{\xi}(D_{(g,s)}(\Delta_g\psi))(\eta,s) &= -\frac{1}{4}\sum_i\sum_{j\neq k}\xi_i\xi_j\, \eta(e_i,e_k)e_j\cdot e_k\cdot\psi - |\xi|^2 s.
\end{align*}

Lastly, note that 
\begin{align*}
	\div_g\, U = \text{Re}\langle\psi, D^2\psi\rangle= -\text{Re}\langle\psi, \Delta\psi\rangle + \frac{1}{4}R \,|\psi|^2
\end{align*}
Therefore, using the symbols for $\Delta_g\psi$ and $R_g$, we obtain
\begin{align*}
	\sigma_{\xi}(D_{(g,s)}(\div_g\, U))(\eta,s) &=  \frac{1}{4}\sum_i\sum_{j\neq k}\xi_i\xi_j\, \eta(e_i,e_k)\, \text{Re}\langle\psi,\,e_j\cdot e_k\cdot\psi\rangle + |\xi|^2\,\text{Re}\langle\psi, s\rangle\\
	&\quad + \frac{1}{4}|\xi|^2\tr(\eta)\,|\psi|^2 -  \frac{1}{4}\eta(\xi,\xi)\,|\psi|^2.
\end{align*}
\end{proof}

\bigskip

\begin{lemma}\label{lem;symbol_3}
	Let $(\dot{g}, \dot{\psi}) = (\eta, s)$ with $\textup{Re}\langle \psi,s\rangle =0$. We have
	
	\begin{equation}
	\begin{aligned}
	\textup{Re}\left\langle \sigma_{\xi}\left(  D_{(g,\psi)} \mathcal{L}_{U}^{\text{spin}}\psi \right)(\eta,s),\,s \right\rangle & = -\frac{1}{8} |\psi|^2\,  \textup{Re}\langle \psi,\, c(\eta(\xi,\cdot)\wedge \xi)\, s\rangle + \frac{1}{2}  \sum_a\textup{Re} \langle\psi,\, e_a\cdot \xi\cdot s\rangle^2,\\[1ex]
	\textup{Re}\left\langle\sigma_{\xi}\left( D_{(g,\psi)} \mathcal{L}_{W(g,g_0)}^{\text{spin}}\psi \right)(\eta,s),\, s \right\rangle  &= -\frac{1}{2}\,\textup{Re}\langle  \psi,\,c(\eta(\xi,\cdot)\wedge\xi) s\rangle.\\[1ex]
	\textup{Re}\left\langle \sigma_{\xi}\left( D_{(g,\psi)}\Delta_g\psi \right)(\eta,s),\, s \right\rangle &= -\frac{1}{4}\textup{Re}\langle \psi,\, c( \eta(\xi,\cdot)\wedge\xi) s\rangle - |\xi|^2 |s|^2,\\[1ex]
	\langle \sigma_{\xi}\left( D_{(g,\psi)}\div_g\, U \right)(\eta,s),\, h\rangle  &=   \frac{1}{4}|\xi|^2\tr(\eta)\, h\,|\psi|^2 -  \frac{1}{4}\eta(\xi,\xi)\,h\,|\psi|^2.
		\end{aligned}
	\end{equation}

\end{lemma}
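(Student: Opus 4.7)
The plan is to take each symbol from Lemma~\ref{lem;symbol_2} and pair it against $s$ (or the scalar test $h$), using three recurring tools. First, Clifford skew-adjointness $\textup{Re}\langle e_i\cdot\phi,\chi\rangle=-\textup{Re}\langle\phi,e_i\cdot\chi\rangle$ lets us transpose a pair $e_a\cdot e_b$ from one factor to the other; as a consequence, $\textup{Re}\langle e_p\cdot e_q\cdot\psi,\psi\rangle=0$ for $p\neq q$, since degree-two Clifford elements are anti-Hermitian (this is the extension of Lemma~\ref{lma:realVF}). Second, the hypothesis $\textup{Re}\langle\psi,s\rangle=0$ annihilates all diagonal ($a=b$) contributions arising from $e_a\cdot e_a=-1$. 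Third, decomposing index coefficients into their symmetric and antisymmetric parts isolates the antisymmetric piece that survives contraction with $e_p\cdot e_q$ ($p\neq q$), and this piece reassembles into the Clifford $2$-form via $c(\alpha\wedge\xi)=\sum_{p\neq q}\alpha_p\xi_q\,e_p\cdot e_q$ with $\alpha=\eta(\xi,\cdot)$.

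For the scalar $\div_g U$, the identity is essentially immediate: the $|\xi|^2\textup{Re}\langle\psi,s\rangle$ term vanishes by the hypothesis, the $\xi_i\xi_j\eta(e_i,e_k)\textup{Re}\langle\psi,e_j\cdot e_k\cdot\psi\rangle$ term vanishes by anti-Hermiticity, and the remaining $|\psi|^2$ terms survive multiplication by $h$. For $\Delta_g\psi$ the model computation applies: transpose $e_j\cdot e_k$ across via skew-adjointness, split $\xi_i\xi_j\eta(e_i,e_k)$ into its $(j,k)$-symmetric and antisymmetric parts, so that only the antisymmetric part survives pairing with $e_j\cdot e_k$ and reassembles into $-\tfrac14\textup{Re}\langle\psi,c(\eta(\xi,\cdot)\wedge\xi)s\rangle$; the piece $-|\xi|^2 s$ contributes $-|\xi|^2|s|^2$ directly. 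The DeTurck symbol $\mathcal{L}_{W(g,g_0)}^{\text{spin}}\psi$ is handled the same way, except that its coefficient $\xi_i\xi_k\eta(e_j,e_k)-\xi_j\xi_k\eta(e_i,e_k)$ is already manifestly antisymmetric in $(i,j)$, so it matches $c(\eta(\xi,\cdot)\wedge\xi)$ after a single transposition with no further decomposition.

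The Kosmann symbol for $\mathcal{L}_U^{\text{spin}}\psi$ is the substantive case, and I would split the four terms in Lemma~\ref{lem;symbol_2} into a lower-order pair (those involving $\textup{Re}\langle\psi,e_a\cdot\xi\cdot s\rangle$) and an upper-order pair (those involving $\textup{Re}\langle\psi,e_b\cdot e_i\cdot e_j\cdot e_k\cdot\psi\rangle$). For the lower-order pair, after transposing $e_a\cdot e_b$ and using the hypothesis to restrict to $a\neq b$, relabeling $a\leftrightarrow b$ in one of the two terms combines them into a single sum; the $b$-contraction then collapses via $\xi=\sum_b\xi_b\,e_b$ (the diagonal $b=a$ contribution being killed by the hypothesis) into the stated $\tfrac12\sum_a\textup{Re}\langle\psi,e_a\cdot\xi\cdot s\rangle^2$. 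For the upper-order pair, the same maneuver yields a bilinear form whose kernel is $\xi_a\xi_j\eta(e_i,e_k)\,\textup{Re}\langle\psi,e_b\cdot e_i\cdot e_j\cdot e_k\cdot\psi\rangle\,\textup{Re}\langle\psi,e_a\cdot e_b\cdot s\rangle$, summed over $a\neq b,\,i,\,j\neq k$; I would then decompose $e_b\cdot e_i\cdot e_j\cdot e_k$ into its degree $0$, $2$, and $4$ Clifford components via case analysis on the index coincidences, using $e_p^2=-1$.

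The main obstacle is precisely this last Clifford bookkeeping. The degree-$4$ component should cancel against the $(i,k)$-symmetry of $\eta$ paired with the antisymmetry of the $4$-form; the degree-$0$ component should vanish via the $j\neq k$ restriction and the hypothesis $\textup{Re}\langle\psi,s\rangle=0$; and only the degree-$2$ component should survive, contributing $|\psi|^2$ from its scalar prefactor and contracting against $e_a\cdot e_b\cdot s$ to reproduce $c(\eta(\xi,\cdot)\wedge\xi)s$ with the required coefficient $-\tfrac18$. I expect this to be a careful but ultimately mechanical case analysis once the degree decomposition is in place, with the key structural point being that the combined $(a,b)$-antisymmetry from the Kosmann construction and the $(i,k)$-symmetry from $\eta$ together force all but the desired contribution to cancel.
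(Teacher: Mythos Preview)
Your treatment of the second, third, and fourth identities matches the paper's proof. For the first identity your overall plan is also sound, but the Clifford bookkeeping in your last paragraph is inverted. In the factor $\textup{Re}\langle\psi,\,e_b\cdot e_i\cdot e_j\cdot e_k\cdot\psi\rangle$ you are pairing $\psi$ against itself, and a degree-$k$ Clifford element (with distinct indices) satisfies $(e_{i_1}\cdots e_{i_k})^* = (-1)^{k(k+1)/2}\,e_{i_1}\cdots e_{i_k}$; in particular degree-$2$ elements are \emph{anti-Hermitian}, so the degree-$2$ component of $e_b\cdot e_i\cdot e_j\cdot e_k$ vanishes in $\textup{Re}\langle\psi,\,\cdot\,\psi\rangle$. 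It is the degree-$0$ component (pairwise coincidences among $b,i,j,k$, compatible with $j\neq k$) that survives and supplies the scalar $|\psi|^2$; your degree-$4$ cancellation via the $(i,k)$-symmetry of $\eta$ is correct. Once you swap the roles of degree $0$ and degree $2$, your scheme goes through---but note that the hypothesis $\textup{Re}\langle\psi,s\rangle=0$ plays no role in the $\psi$--$\psi$ factor, so your stated reason for the vanishing is also off.

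The paper takes a shorter route that sidesteps this case analysis entirely: it simplifies the symbol \emph{before} pairing with $s$. Using $e_i\cdot e_k=-\delta_{ik}+\tfrac12(e_i\cdot e_k-e_k\cdot e_i)$ together with the symmetry of $\eta$ reduces the four-fold Clifford product inside $\textup{Re}\langle\psi,\,e_b\cdot e_i\cdot e_j\cdot e_k\cdot\psi\rangle$ to a two-fold one, and then $\textup{Re}\langle\psi,\,e_b\cdot e_j\cdot\psi\rangle=-\delta_{bj}|\psi|^2$ gives directly
\[
\sigma_{\xi}\!\left(D_{(g,\psi)}\mathcal{L}_U^{\mathrm{spin}}\psi\right)(\eta,s)
= \tfrac{1}{8}|\psi|^2\,c(\eta(\xi,\cdot)\wedge\xi)\,\psi
+ \tfrac{1}{2}\sum_a\textup{Re}\langle\psi,\,e_a\cdot\xi\cdot s\rangle\,\xi\cdot e_a\cdot\psi,
\]
from which the claimed identity follows by one transposition of the anti-Hermitian operator $c(\eta(\xi,\cdot)\wedge\xi)$. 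This buys you a closed form for the symbol (used again in Lemma~\ref{lem:lower_order_terms}) and avoids the index-coincidence bookkeeping.
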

\begin{proof}
	Let us observe that	
	\begin{align*}
		e_i\cdot e_k &= \frac{1}{2}(e_i\cdot e_k + e_k\cdot e_i) + \frac{1}{2}(e_i\cdot e_k - e_k\cdot e_i)\\
		&= -\delta_{ik} + \frac{1}{2}(e_i\cdot e_k - e_k\cdot e_i).
	\end{align*}
	Using the fact that $\eta$ is symmetric, we get
	
	\begin{align*}
		 &-\sum_{i}\sum_{j\neq k}\, \xi_a\xi_j\, \eta(e_i, e_k)\, \textup{Re}\langle \psi,\, e_b\cdot e_i\cdot e_j\cdot e_k\cdot\psi\rangle\\
		 &= \sum_{i}\sum_{j\neq k}\, \xi_a\xi_j\, \eta(e_i, e_k)\, \textup{Re}\langle \psi,\, e_b\cdot e_i\cdot e_k\cdot e_j\cdot\psi\rangle\\
		 &= \sum_{i,j,k}\, \xi_a\xi_j\, \eta(e_i, e_k)\, \textup{Re}\langle \psi,\, e_b\cdot e_i\cdot e_k\cdot e_j\cdot\psi\rangle - \sum_{i,j}\, \xi_a\xi_j\, \eta(e_i, e_j)\, \textup{Re}\langle \psi,\, e_b\cdot e_i\cdot e_j\cdot e_j\cdot\psi\rangle\\
		 &= -\sum_{j}\, \xi_a\xi_j\,\tr(\eta)\, \textup{Re}\langle \psi,\, e_b\cdot  e_j\cdot\psi\rangle + \xi_a \, \textup{Re}\langle \psi,\, e_b\cdot  \eta(\xi,\cdot)\cdot\psi\rangle\\
		 &= \xi_a\xi_b\,\tr(\eta)|\psi|^2 - \xi_a\,\eta(\xi, e_b) |\psi|^2.
	\end{align*}
	The last line follows from the property
	
		\[\textup{Re}\langle \psi,\, e_b\cdot  e_j\cdot\psi\rangle = -\delta_{jb}|\psi|^2.	\]
	This gives	
	
	\begin{equation}\label{eqn1;symbol_simplification}
		\begin{aligned}
			&-\sum_{i,a,b}\sum_{j\neq k}\, \xi_a\xi_j\, \eta(e_i, e_k)\, \textup{Re}\langle \psi,\, e_b\cdot e_i\cdot e_j\cdot e_k\cdot\psi\rangle \, e_a\cdot e_b\cdot \psi\\[1ex]
		&=\sum_{a, b} \Big(\xi_a\xi_b\,\tr(\eta)|\psi|^2 - \xi_a\,\eta(\xi, e_b) |\psi|^2\Big)\, e_a\cdot e_b\cdot\psi\\[1ex]
		&= -|\psi|^2|\xi|^2\tr(\eta)\,\psi - |\psi|^2\xi\cdot\eta(\xi,\cdot)\cdot\psi.		
		\end{aligned}
	\end{equation}
	Next, we have
	
	\begin{equation}\label{eqn2;symbol_simplification}
		\begin{aligned}
		& - \xi_b\, \textup{Re}\langle\psi,\, e_a\cdot \xi\cdot s\rangle \, e_a\cdot e_b\cdot \psi + \xi_a\, \textup{Re}\langle\psi,\, e_b\cdot \xi\cdot s\rangle \, e_a\cdot e_b\cdot \psi\\[1ex]
		&= -  \textup{Re}\langle\psi,\, e_a\cdot \xi\cdot s\rangle \, e_a\cdot \xi\cdot \psi +  \textup{Re}\langle\psi,\, e_b\cdot \xi\cdot s\rangle \, \xi\cdot e_b\cdot \psi \\[1ex]
		&=  2 \textup{Re}\langle\psi,\,  e_a\cdot  \xi\cdot s\rangle \, \xi\cdot e_a\cdot \psi + 2 \textup{Re}\langle\psi,\, \xi\cdot \xi\cdot s\rangle \psi\\[1ex]
		&= 2 \textup{Re}\langle\psi,\,  e_a\cdot  \xi\cdot s\rangle \, \xi\cdot e_a\cdot \psi.
		\end{aligned}
	\end{equation}
	Here the last line follows from $\text{Re}\langle\psi, s\rangle = 0$.
	Putting (\ref{eqn1;symbol_simplification}) and (\ref{eqn2;symbol_simplification}) together with the identity $c(v\wedge w) = \frac{1}{2}(v\cdot w-w\cdot v)$ gives
	
\begin{equation}\label{eqn3;symbol_simplification}
	\begin{aligned}
		\sigma_{\xi}\left(D_{(g,\psi)}\mathcal{L}_{U}^{\text{spin}}\psi\right)(\eta,s)=  \frac{1}{8}|\psi|^2\, c(\eta(\xi,\cdot)\wedge \xi)\,\psi +\frac{1}{2}  \textup{Re}\langle\psi,\,  e_a\cdot  \xi\cdot s\rangle \, \xi\cdot e_a\cdot \psi.
	\end{aligned}
\end{equation}
 This implies the first identity. For the second identity, it follows from 
 	\begin{align*}
		 &- \sum_k\sum_{i\neq j}(\xi_i\xi_k\,\eta(e_j, e_k) - \xi_j\xi_k\eta(e_i, e_k))\,\text{Re}\langle e_i\cdot e_j\cdot \psi,\,s\rangle\\[1ex]
		 &= -\text{Re}\langle\xi\cdot \eta(\xi,\cdot)\cdot\psi,\,s\rangle + \text{Re}\langle\eta(\xi,\cdot)\cdot\xi\cdot\psi,\,s\rangle \\[1ex]
		 &= -\text{Re}\langle c(\xi\wedge \eta(\xi,\cdot))\psi,\,s\rangle + \text{Re}\langle c(\eta(\xi,\cdot)\wedge\xi) \psi,\,s\rangle\\[1ex]
		 &= -2\text{Re}\langle  \psi,\,c(\eta(\xi,\cdot)\wedge\xi) s\rangle.
	\end{align*}
	For the third identity, we have
	
	\begin{align*}
		&-\sum_i\sum_{j\neq k}\xi_i\xi_j\, \eta(e_i,e_k)\, \text{Re}\langle e_j\cdot e_k\cdot\psi,\, s\rangle \\[1ex]
		&= -\sum_{i,j,k}\xi_i\xi_j\, \eta(e_i,e_k)\, \text{Re}\langle e_j\cdot e_k\cdot\psi,\, s\rangle + \sum_{i,j}\xi_i\xi_j\, \eta(e_i,e_j)\, \text{Re}\langle e_j\cdot e_j\cdot\psi,\, s\rangle\\[1ex]
		&= -\text{Re}\langle\xi\cdot\eta(\xi,\cdot)\cdot\psi,\, s\rangle -\eta(\xi,\xi) \text{Re}\langle\psi,\, s\rangle \\[1ex]
		&= -\text{Re}\langle c(\xi\wedge \eta(\xi,\cdot))\psi,\, s\rangle\\[1ex]
		&= -\text{Re}\langle \psi,\, c( \eta(\xi,\cdot)\wedge\xi) s\rangle.
	\end{align*}	
	This gives the third identity. Lastly, we similarly deduce that
	
	\begin{align*}
		&\sum_i\sum_{j\neq k}\xi_i\xi_j\, \eta(e_i,e_k)\, \text{Re}\langle\psi,\,e_j\cdot e_k\cdot\psi\rangle\\[1ex]
		&= \sum_{i,j,k}\xi_i\xi_j\, \eta(e_i,e_k)\, \text{Re}\langle\psi,\,e_j\cdot e_k\cdot\psi\rangle - \sum_{i,j}\xi_i\xi_j\, \eta(e_i,e_j)\, \text{Re}\langle\psi,\,e_j\cdot e_j\cdot\psi\rangle\\[1ex]
		&= \text{Re}\langle \psi,\, \xi\cdot \eta(\xi,\cdot)\cdot \psi\rangle + \eta(\xi,\xi)\,\text{Re}\langle\psi,\,\psi\rangle\\[1ex]
		&= \text{Re}\langle \psi,\, c(\xi\wedge \eta(\xi,\cdot)) \psi\rangle\\[1ex]
		&=0.
	\end{align*}
	This gives the last identity. 
\end{proof}
\vspace{0.5cm}
Now, for each $\psi\in\mathcal{S}$ let us define  $A(\psi)^{kl}\in\text{End}(\mathcal{S})$  by
	\begin{align}
		A(\psi)^{kl}(s) := (g^{kl}\text{id})\, s + \frac{1}{\tau}\textup{Re}\langle\psi, e_a\cdot e_l\cdot s\rangle e_a\cdot e_k\cdot \psi.
	\end{align}
 Let us recall that 
	\begin{align*}
		- 2\Ric(g) + \mathcal{L}_{W(g,g_0)}g = g^{kl}\hat{\nabla}_k\hat{\nabla}_l g + \mathcal{Q}(g, \hat{\nabla} g),
	\end{align*}
	where $\hat{\nabla}$ is covariant derivative with respect to the fixed background metric $g_0$, and $\mathcal{Q}(g, \hat{\nabla} g)$ is a quadratic term depends only up to first order derivatives of $g$. We define
	\begin{equation}\label{def:lower_order_terms}
		\begin{aligned}
			\mathcal{F}(g,f,\psi) &:= -2\Ric(g) + \mathcal{L}_{W(g,g_0)}g   + \frac{4}{\tau}S_{g,f,\psi} + \frac{\lambda}{4\tau}g -  g^{kl}\hat{\nabla}_k\hat{\nabla}_l g\\
			&= \mathcal{Q}(g, \hat{\nabla} g) + \frac{4}{\tau}S_{g,f,\psi} + \frac{\lambda}{4\tau}g.\\[1ex]
			\mathcal{G}(g,f,\psi) &:= - R_g + \frac{\lambda n}{2\tau} +\frac{4}{\tau} \Tr_g S_{g,f,\psi} + \frac{2}{\tau}\div\, U + (1-\frac{|\psi|^2}{\tau})|\nabla f|^2 + \langle \nabla f, W - \frac{2}{\tau}U\rangle \\[1ex]
			\mathcal{H}(g,f,\psi) &:= \Delta_f \psi +\frac{|\nabla\psi|^2}{|\psi|^2}\psi  +(1-\frac{|\psi|^2}{\tau})\nabla_{\nabla f}\psi + \mathcal{L}_{W(g,g_0)- \frac{2}{\tau}U}^{\text{spin}}\psi -  A(\psi)^{kl}\hat{\nabla}_k\hat{\nabla}_l\psi
		\end{aligned}
	\end{equation}
	
\medskip
\begin{lemma}\label{lem:lower_order_terms}
	For every $(g,f,\psi)\in\Gamma(\text{Sym}^2(T^*M)\oplus\R \oplus\mathcal{S})$, we have
	\begin{equation}
		\begin{aligned}
			\sigma_{\xi}(D_{(g,f,\psi)}\mathcal{F}(g,f,\psi))(\eta,h,  s) &= 0,\\[1ex]
			\sigma_{\xi}(D_{(g,f,\psi)}\mathcal{H}(g,f,\psi))(\eta,h, s) &=  \left(\frac{3}{4}  - \frac{|\psi_0|^2}{4\tau}\right)\, c(\eta(\xi,\cdot)\wedge\xi) \psi.
		\end{aligned}
	\end{equation}
	In other words, $\mathcal{F}(g,f,\psi)$ contains no top order derivatives of $g,f,\psi$, whereas $\mathcal{H}(g,f,\psi)$ contains no top order derivatives of $f,\psi$.
\end{lemma}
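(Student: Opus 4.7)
The plan is to compute the order-two principal symbol of each summand appearing in the definitions of $\mathcal{F}$ and $\mathcal{H}$, using Lemmas~\ref{lem;Ricci;symbols} and~\ref{lem;symbol_2}, and then observe the algebraic cancellations that are built into the DeTurck-type subtractions. For $\mathcal{F}$ this is essentially the standard DeTurck calculation: Lemma~\ref{lem;Ricci;symbols} gives
\[
\sigma_\xi(-2D\Ric(g))_{ab}(\eta,s) + \sigma_\xi(D\mathcal{L}_{W(g,g_0)}g)_{ab}(\eta,s) = -|\xi|^2\eta_{ab},
\]
because the two contributions $(\xi_i\xi_a\eta_{bi}+\xi_i\xi_b\eta_{ai}-\xi_a\xi_b\tr\eta)$ cancel exactly, and this matches $\sigma_\xi(g^{kl}\hat{\nabla}_k\hat{\nabla}_l g) = -|\xi|^2\eta_{ab}$. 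Hence the full second-order-in-$g$ part is subtracted off in the definition of $\mathcal{F}$. What remains, namely $\mathcal{Q}(g,\hat{\nabla}g) + \tfrac{4}{\tau}S_{g,f,\psi} + \tfrac{\lambda}{4\tau}g$, is first-order in $(g,f,\psi)$ (since $S$ involves only first derivatives of $\psi, f, g$ by~\eqref{eq:S}), so its second-order principal symbol vanishes identically.

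For $\mathcal{H}$, the summands carrying second-order derivatives are $\Delta_f\psi$ (whose symbol agrees with that of $\Delta_g\psi$, since $\nabla_{\nabla f}\psi$ is only first order), $\mathcal{L}_{W(g,g_0)}^{\text{spin}}\psi$ (second order in $g$), $-\tfrac{2}{\tau}\mathcal{L}_U^{\text{spin}}\psi$ (second order in both $g$ and $\psi$, since $U^a = \textup{Re}\langle\psi, e_a\cdot D\psi\rangle$ is first order in $\psi$ and $\mathcal{L}_U^{\text{spin}}$ involves $\nabla U$), and $-A(\psi)^{kl}\hat{\nabla}_k\hat{\nabla}_l\psi$ (second order in $\psi$). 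All other terms in $\mathcal{H}$ are first order or lower; in particular, no summand contains a second derivative of $f$, so the $h$-component of $\sigma_\xi(D\mathcal{H})$ vanishes automatically. I apply Lemma~\ref{lem;symbol_2} to each of these four terms and carry out the Clifford-algebraic simplification already recorded in the proof of Lemma~\ref{lem;symbol_3} -- in particular equations~\eqref{eqn1;symbol_simplification}--\eqref{eqn3;symbol_simplification} together with the identities $v\cdot w = -g(v,w) + c(v\wedge w)$ and $\sum_a \xi_a e_a = \xi$ -- to bring every expression into the canonical form involving $c(\eta(\xi,\cdot)\wedge\xi)\psi$.

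The $\eta$-contributions, after this simplification, are $\tfrac{1}{4}c(\eta(\xi,\cdot)\wedge\xi)\psi$ from $\Delta_g\psi$ (using $\xi\cdot\eta(\xi,\cdot)+\eta(\xi,\xi) = -c(\eta(\xi,\cdot)\wedge\xi)$), $\tfrac{1}{2}c(\eta(\xi,\cdot)\wedge\xi)\psi$ from $\mathcal{L}_{W(g,g_0)}^{\text{spin}}\psi$, and $-\tfrac{|\psi|^2}{4\tau}c(\eta(\xi,\cdot)\wedge\xi)\psi$ from $-\tfrac{2}{\tau}\mathcal{L}_U^{\text{spin}}\psi$, while the $A^{kl}$ term contributes nothing in the $\eta$-direction since $A^{kl}$ depends on $g$ only algebraically. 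Summing and invoking the pointwise normalization $|\psi|^2 \equiv |\psi_0|^2$ yields the claimed coefficient $\tfrac{3}{4} - \tfrac{|\psi_0|^2}{4\tau}$. For the $s$-variation, restricted to variations tangent to the unit spinor bundle (so that $\textup{Re}\langle\psi, s\rangle = 0$ and equation~\eqref{eqn3;symbol_simplification} applies), the three contributions are $-|\xi|^2 s$ from $\Delta_g\psi$, $-\tfrac{1}{\tau}\sum_a\textup{Re}\langle\psi, e_a\cdot\xi\cdot s\rangle\, \xi\cdot e_a\cdot\psi$ from $-\tfrac{2}{\tau}\mathcal{L}_U^{\text{spin}}\psi$, and $+|\xi|^2 s + \tfrac{1}{\tau}\sum_a\textup{Re}\langle\psi, e_a\cdot\xi\cdot s\rangle\, \xi\cdot e_a\cdot\psi$ from $-A(\psi)^{kl}\hat{\nabla}_k\hat{\nabla}_l\psi$ (obtained by contracting $\xi_k\xi_l$ into the definition of $A$ and collapsing with $\sum_k\xi_k e_k = \xi$). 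These three cancel in pairs, which is precisely the reason for including the $\tfrac{1}{\tau}\textup{Re}\langle\psi, e_a\cdot e_l\cdot s\rangle e_a\cdot e_k\cdot\psi$ correction in the definition of $A(\psi)^{kl}$.

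The hardest part of the write-up is not conceptual but notational: the iterated Clifford-product bookkeeping, and the repeated rewriting via $v\cdot w + w\cdot v = -2g(v,w)$, $\sum_i\xi_i e_i = \xi$, the skew-adjointness $\textup{Re}\langle X\cdot\phi,\psi\rangle = -\textup{Re}\langle\phi, X\cdot\psi\rangle$, the vanishing $\textup{Re}\langle\psi,\xi\cdot\xi\cdot s\rangle = -|\xi|^2\textup{Re}\langle\psi, s\rangle = 0$, and the identity $\textup{Re}\langle\psi, e_b\cdot e_j\cdot\psi\rangle = -\delta_{bj}|\psi|^2$, which together funnel every bilinear combination into the canonical $c(\cdot\wedge\xi)\psi$ form.
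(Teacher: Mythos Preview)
Your approach is essentially the same as the paper's: both rely on Lemmas~\ref{lem;Ricci;symbols} and~\ref{lem;symbol_2} to compute the order-two symbol of each summand and then observe the DeTurck-type cancellations. The only real difference is that the paper works throughout with the quadratic form $\textup{Re}\langle\,\sigma_\xi(\cdot),\,s\rangle$ (i.e.\ it quotes Lemma~\ref{lem;symbol_3} directly), whereas you compute the spinor-valued symbol itself using the intermediate identities \eqref{eqn1;symbol_simplification}--\eqref{eqn3;symbol_simplification} from the \emph{proof} of Lemma~\ref{lem;symbol_3}. Your route is slightly more direct and in principle gives a stronger statement (the full spinor, not just its pairing with $s$).

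One small computational slip to flag: when you contract $\xi_k\xi_l$ into the definition of $A(\psi)^{kl}$, the second term becomes $\tfrac{1}{\tau}\textup{Re}\langle\psi,e_a\cdot\xi\cdot s\rangle\,e_a\cdot\xi\cdot\psi$ (the index $k$ sits on the \emph{right} Clifford factor $e_k$), not $\xi\cdot e_a\cdot\psi$ as you wrote. With the correct order the two ``$s$-terms'' you list do not literally cancel as spinors; what one actually obtains is a residual proportional to $\sum_a\textup{Re}\langle\psi,e_a\cdot\xi\cdot s\rangle\,c(e_a\wedge\xi)\psi$. The paper sidesteps this by only checking $\textup{Re}\langle\sigma_\xi(D\mathcal{H}),s\rangle$, which is all that is needed for the subsequent parabolicity argument on the unit-spinor bundle. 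So your overall strategy and conclusion match the paper's, but the claimed ``exact pairwise cancellation'' at the level of spinors should be replaced by the weaker (and sufficient) statement that the $s$-quadratic form cancels.
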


\begin{proof}
	Let us abbreviate the equations (\ref{eq:coupled_dg_gauged}) and (\ref{eq:coupled_dpsi_gauged}) as
\begin{equation}\label{gauged;system}
	\begin{aligned}
		&\frac{\partial}{\partial t}g =  Q_g(g,f,\psi)\\[1ex]
		&\frac{\partial}{\partial t}\psi =  Q_{\psi}(g,f,\psi).
	\end{aligned}
\end{equation}
Since $S(g,f,\psi)$ is of first-order, we have
\begin{align*}
	\langle\sigma_{\xi}(D_{(g,f,\psi)}Q_g)(\eta,h,  s),\,\eta\rangle
	&=	\langle \sigma_{\xi}(D(-2\Ric(g) + \mathcal{L}_{\gamma W(g,g_0)}g))(\eta, h, s),\,\eta\rangle = -|\xi|^2 |\eta|^2.
	\end{align*}
This implies that $\sigma_{\xi}(D_{(g,f,\psi)}\mathcal{F}(g,f,\psi))(\eta,h,  s) = 0$.
Next, using Lemma \ref{lem;symbol_3}, we have

	\begin{align*}
		&\textup{Re}\langle \sigma_{\xi}(D_{(g,f,\psi)}Q_{\psi})(\eta,h, s),\, s\rangle\\[1ex]
		&= \textup{Re}\langle \sigma_{\xi}\left( D\Delta_g\psi \right)(\eta,h,s),\, s\rangle +  \textup{Re}\left\langle\sigma_{\xi}\left( D\mathcal{L}_{W(g,g_0)}^{\text{spin}}\psi \right)(\eta,h,s),\, s \right\rangle  -\frac{2}{\tau}\, \textup{Re}\left\langle \sigma_{\xi}\left(  D \mathcal{L}_{U}^{\text{spin}}\psi \right)(\eta,h,s),\,s \right\rangle\\[1ex]
		&=  -\frac{1}{4}\textup{Re}\langle \psi,\, c( \eta(\xi,\cdot)\wedge\xi) s\rangle - |\xi|^2 |s|^2 -\frac{1}{2}\,\textup{Re}\langle  \psi,\,c(\eta(\xi,\cdot)\wedge\xi) s\rangle \\[1ex]
		&\quad -\frac{2}{\tau}\, \left( -\frac{1}{8} |\psi|^2\,  \textup{Re}\langle \psi,\, c(\eta(\xi,\cdot)\wedge \xi)\, s\rangle + \frac{1}{2}  \sum_a\textup{Re} \langle\psi,\, e_a\cdot \xi\cdot s\rangle^2\right)\\[1ex]
		&= -   |\xi|^2 |s|^2 -\frac{1}{\tau}\sum_a \textup{Re}\langle\psi,\, e_a\cdot \xi\cdot s\rangle^2 - \left(\frac{3}{4}  - \frac{|\psi_0|^2}{4\tau}\right)\,\textup{Re}\langle  \psi,\,c(\eta(\xi,\cdot)\wedge\xi) s\rangle.
	\end{align*}
On the other hand, from the definition of $A(\psi)^{kl}$, we also have
\begin{align*}
	\textup{Re}\langle \sigma_{\xi}(D_{(g,f,\psi)}A(\psi)^{kl}\hat{\nabla}_k\hat{\nabla}_l\psi)(\eta,h, s),\, s\rangle= -   |\xi|^2 |s|^2 -\frac{1}{\tau}\sum_a \textup{Re}\langle\psi,\, e_a\cdot \xi\cdot s\rangle^2.
\end{align*}
This implies that $\sigma_{\xi}(D_{(g,f,\psi)}\mathcal{H}(g,f,\psi))(\eta,h, s) =  - \left(\frac{3}{4}  - \frac{|\psi_0|^2}{4\tau}\right)\,\textup{Re}\langle  \psi,\,c(\eta(\xi,\cdot)\wedge\xi) s\rangle$.
\end{proof}

\medskip

\subsection{Proof of short-time existence}

We solve the gauged system (\ref{eq:coupled_dg_gauged})-(\ref{eq:coupled_dpsi_gauged}) using a fixed point argument. To that end, Fix $(g_0, f_0, \psi_0)$ such that $|\psi_0|^2 \equiv c$. Let us consider the case where $c<\tau$ so that \eqref{eq:coupled_du_gauged} is backward parabolic. The proof of the case $c>\tau$ in which \eqref{eq:coupled_du_gauged} being forward parabolic is exactly the same. Let $T>0$ be chosen later. We consider the parabolic H\"older space
	
	\begin{align*}
		\mathcal{Y}_T &= C^{2+\alpha,1+\frac{\alpha}{2}}(M\times [0,T];\,\text{Sym}^2(T^*M)\oplus\R \oplus\mathcal{S}).
	\end{align*}
Moreover, let $\Lambda>0$ be a fixed constant to be chosen, we define closed subset in the parabolic H\"older space $\mathcal{Y}_T$ by
	
	\begin{align*}
		\mathcal{B}_{T,\Lambda} &:= \left\{ (g,f,\psi)\in \mathcal{Y}_T|\ g(\cdot,0)=g_0,\, \psi(\cdot,0)=\psi_0,\, f(\cdot,T) = f_0\ \text{and}\ \|(g,f,\psi)\|_{\mathcal{Y}_T}\leq \Lambda \right\}.
	\end{align*}

\begin{lemma}\label{lem:existence_decoupled_system}
	For each $(\omega, v,\varphi)\in \mathcal{Y}_T$, there exists a unique $(g,f,\psi)\in \mathcal{Y}_T$ solving the system
	
	\begin{align}
			\label{eq:decoupled_dg_gauged}\D{g}{t} - g_0^{kl}\hat{\nabla}_k\hat{\nabla}_lg & = (\omega^{kl}-g_0^{kl})\hat{\nabla}_k\hat{\nabla}_l\omega + \mathcal{F}(\omega, v,\varphi)\\[1ex]
			\label{eq:decoupled_du_gauged}\D{f}{t}+ \left( 1 - \frac{|\psi_0|^2}{\tau}\right)\Delta_{g_0} f & = -\left( 1 - \frac{|\psi_0|^2}{\tau}\right)(\Delta_{g} v - \Delta_{g_0} v)+    + \mathcal{G}(g, v,\psi)\\[1ex]
			\label{eq:decoupled_dpsi_gauged}\D{\psi}{t} - A(\psi_0)^{kl}\hat{\nabla}_k\hat{\nabla}_l\psi & = (A(\varphi)^{kl} - A(\psi_0)^{kl})\hat{\nabla}_k\hat{\nabla}_l\varphi + \mathcal{H}(g, v,\varphi)\\[1ex]
			\notag g(\cdot,0) &= g_0\\[1ex]
			\notag \psi(\cdot,0) &= \psi_0\\[1ex]
			\notag f(\cdot,T) &= f_0,
	\end{align}
where $\mathcal{F},\mathcal{G},\mathcal{H}$ are defined in (\ref{def:lower_order_terms}).  Moreover, if $(\omega, v,\varphi)\in \mathcal{B}_{T,\Lambda}$, then we have the estimate
\begin{align*}
	\|(g,f,\psi)\|_{\mathcal{Y}_T} \leq K
\end{align*}
for sufficiently small $T>0$, where $K = K(\|g_0\|_{2+\alpha;M}, \|f_0\|_{2+\alpha;M},\|\psi_0\|_{2+\alpha;M})$ is a positive constant. 
\end{lemma}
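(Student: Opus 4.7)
\medskip

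\noindent\textbf{Proof proposal.} The plan is to exploit the triangular structure of the decoupled system and solve the three linear parabolic equations in the order $g\to\psi\to f$, applying parabolic Schauder theory on sections of the appropriate vector bundle at each stage. Equation~\eqref{eq:decoupled_dg_gauged} has right-hand side depending only on the prescribed $(\omega,v,\varphi)$; \eqref{eq:decoupled_dpsi_gauged} depends additionally on the (by then known) $g$; and \eqref{eq:decoupled_du_gauged} depends on both $g$ and $\psi$.

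For the metric equation, the frozen operator $\partial_t - g_0^{kl}\hat{\nabla}_k\hat{\nabla}_l$ on $\textup{Sym}^2(T^*M)$ is strictly forward parabolic since $g_0$ is Riemannian, and the right-hand side lies in $C^{\alpha,\alpha/2}$ because $(\omega,v,\varphi)\in\mathcal{Y}_T$; standard vector-bundle parabolic Schauder theory then produces a unique $g\in C^{2+\alpha,1+\alpha/2}$ with $g(\cdot,0)=g_0$. For the spinor equation, I would invoke the symbol analysis in Lemma~\ref{lem:lower_order_terms} to see that $A(\psi_0)^{kl}\hat{\nabla}_k\hat{\nabla}_l$ is a uniformly elliptic second-order operator on sections of $\mathcal{S}$ (the rough Laplacian plus a pointwise-nonnegative spinorial correction of size $\tau^{-1}$ in the symbol), and apply the same theory to obtain a unique $\psi\in C^{2+\alpha,1+\alpha/2}$ with $\psi(\cdot,0)=\psi_0$. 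For the scalar equation in the regime $c<\tau$, the Laplacian carries the strictly positive coefficient $1-c/\tau$, making the equation strictly \emph{backward} parabolic; after the time reversal $\tilde f(\cdot,t):=f(\cdot,T-t)$ it becomes a strictly forward parabolic problem with initial condition $\tilde f(\cdot,0)=f_0$, to which Schauder applies and yields the unique $f$ with $f(\cdot,T)=f_0$. (In the regime $c>\tau$ the scalar equation is already forward parabolic and one imposes $f(\cdot,0)=f_0$.)

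For the uniform bound, I would combine the three Schauder estimates of the schematic form
\[
\|u\|_{C^{2+\alpha,1+\alpha/2}(M\times[0,T])}
\le C\bigl(\|\textup{source}\|_{C^{\alpha,\alpha/2}} + \|\textup{Cauchy/terminal data}\|_{C^{2+\alpha}(M)}\bigr)
\]
with a parabolic short-time interpolation. The sources are built from $\mathcal{F},\mathcal{G},\mathcal{H}$ together with the quasilinear corrections $(\omega^{kl}-g_0^{kl})\hat{\nabla}_k\hat{\nabla}_l\omega$ and $(A(\varphi)-A(\psi_0))^{kl}\hat{\nabla}_k\hat{\nabla}_l\varphi$. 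Using the matching conditions $\omega(\cdot,0)=g_0$, $\varphi(\cdot,0)=\psi_0$, $v(\cdot,T)=f_0$ encoded in $\mathcal{B}_{T,\Lambda}$, together with the parabolic interpolation $\|\omega(\cdot,t)-g_0\|_{C^{\alpha}(M)}=O(t^{\alpha/2})\Lambda$ (and analogously for $\varphi$, $v$), the source at every $t\in[0,T]$ differs from the fixed quantities $\mathcal{F}(g_0,f_0,\psi_0)$, $\mathcal{G}(g_0,f_0,\psi_0)$, $\mathcal{H}(g_0,f_0,\psi_0)$ by a term of size $O(T^{\alpha/2})P(\Lambda)$ for some polynomial $P$. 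Choosing $T$ small enough (depending on $\Lambda$ and the data) absorbs this polynomial growth and leaves a bound $K=K(\|g_0\|_{2+\alpha},\|f_0\|_{2+\alpha},\|\psi_0\|_{2+\alpha})$ independent of $\Lambda$.

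The main analytical obstacle is this short-time smallness of the source contribution: it is essential both for the constant $K$ to depend only on the initial/terminal data and, in the subsequent contraction-mapping argument, for the map $(\omega,v,\varphi)\mapsto(g,f,\psi)$ to send $\mathcal{B}_{T,\Lambda}$ into itself (upon choosing, say, $\Lambda=2K$). A secondary subtlety, which I would verify via the symbol computation of Lemma~\ref{lem:lower_order_terms}, is the genuine uniform ellipticity of the frozen spinor operator $A(\psi_0)^{kl}\hat{\nabla}_k\hat{\nabla}_l$, which is not the naive rough Laplacian on $\mathcal{S}$.
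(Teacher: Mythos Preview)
Your proposal is correct and follows essentially the same route as the paper: solve the three linear parabolic problems in the triangular order $g\to\psi\to f$ (reversing time for $f$ when $c<\tau$), apply Schauder estimates at each stage, and use a short-time interpolation inequality (the paper's Lemma~\ref{lem:auxiliary_estimate}, yielding a factor $T^{1/(2+\alpha)}$ rather than your $T^{\alpha/2}$) to show that the sources differ from their values at the data by something small in $T$. The only refinement the paper makes explicit is that, because $\mathcal{H}$ (resp.\ $\mathcal{G}$) depends on second derivatives of $g$ (resp.\ of $g$ and $\psi$), one compares $\mathcal{H}(g,v,\varphi)$ to $\mathcal{H}(g,f_0,\psi_0)$ with the already-solved $g$ kept fixed (and similarly for $\mathcal{G}$), feeding in the cascaded bound on $g$ (and then $\psi$) rather than interpolating in those top-order variables.
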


\begin{proof}
	Since (\ref{eq:decoupled_dg_gauged}) is a linear parabolic equation, there exists a unique solution $g\in C^{2+\alpha,1+\frac{\alpha}{2}}(M\times [0,T];\,\text{Sym}^2(T^*M))$ to (\ref{eq:decoupled_dg_gauged}). Invoking parabolic Schauder estimate, we have
	\begin{align*}
		&\|g\|_{2+\alpha,1+\frac{\alpha}{2};M\times[0,T]}\\[1ex]
		& \leq C \Big( \|(\omega^{kl}-g_0^{kl})\hat{\nabla}_k\hat{\nabla}_l\omega \|_{\alpha,\frac{\alpha}{2};M\times[0,T]} + \| \mathcal{F}(\omega,v,\varphi)\|_{\alpha,\frac{\alpha}{2};M\times[0,T]} +\|g_0\|_{2+\alpha;M}\Big).
	\end{align*}
	If $(\omega, v,\varphi)\in \mathcal{B}_{T,\Lambda}$, then using Lemma \ref{lem:auxiliary_estimate} we obtain
	\begin{equation}
		\begin{aligned}
			&\|g\|_{2+\alpha,1+\frac{\alpha}{2};M\times[0,T]}\\[1ex]
			&\leq  C \Big( \|\omega^{kl}-g_0^{kl}\|_{\alpha,\frac{\alpha}{2};M\times[0,T]} \|\hat{\nabla}_k\hat{\nabla}_l\omega \|_{\alpha,\frac{\alpha}{2};M\times[0,T]} + \| \mathcal{F}(g_0,f_0,\psi_0)\|_{\alpha,\frac{\alpha}{2};M\times[0,T]}\\[1ex]
			&\hspace{1cm} + \| \mathcal{F}(\omega, v,\varphi) - \mathcal{F}(g_0,f_0,\psi_0)\|_{\alpha,\frac{\alpha}{2};M\times[0,T]} +\|g_0\|_{2+\alpha;M}\Big)\\[1ex]
			&\leq C_1(\Lambda)\, T^{\frac{1}{2+\alpha}} + C_2,\\[1ex]
		\end{aligned}
	\end{equation}
	where $C_2 = C_2(\|g_0\|_{2+\alpha;M}, \|f_0\|_{2+\alpha;M},\|\psi_0\|_{2+\alpha;M})$. Choosing $T>0$ sufficiently small, we obtain
	
	\begin{equation}\label{estimate:decoupled_g}
		\begin{aligned}
			\|g\|_{2+\alpha,1+\frac{\alpha}{2};M\times[0,T]} \leq 2C_2.
		\end{aligned}
	\end{equation}
	 Next, having (\ref{eq:decoupled_dg_gauged}) solved, from the proof of Lemma \ref{lem:lower_order_terms}, we see that (\ref{eq:decoupled_dpsi_gauged}) is a linear parabolic system. Hence from standard parabolic theory (c.f. \cite{LSU1968}) there exists a unique solution $\psi\in C^{2+\alpha,1+\frac{\alpha}{2}}(M\times [0,T];\,\mathcal{S})$ to (\ref{eq:decoupled_dpsi_gauged}). Moreover, we can apply parabolic Schauder estimate to (\ref{eq:decoupled_dpsi_gauged}):
	\begin{align*}
		&\|\psi\|_{2+\alpha,1+\frac{\alpha}{2};M\times[0,T]}\\[1ex]
		& \leq C \Big( \|(A(\varphi)^{kl} - A(\psi_0)^{kl})\hat{\nabla}_k\hat{\nabla}_l\varphi \|_{\alpha,\frac{\alpha}{2};M\times[0,T]} + \| \mathcal{H}(g, v,\varphi)\|_{\alpha,\frac{\alpha}{2};M\times[0,T]} + \|\psi_0\|_{2+\alpha;M}\Big).
	\end{align*}
	If $(\omega, v,\varphi)\in \mathcal{B}_{T,\Lambda}$, then using Lemma \ref{lem:auxiliary_estimate} we obtain
	\begin{equation}
		\begin{aligned}
			&\|\psi\|_{2+\alpha,1+\frac{\alpha}{2};M\times[0,T]}\\[1ex]
			&\leq  C \Big( \|(A(\varphi)^{kl} - A(\psi_0)^{kl}\|_{\alpha,\frac{\alpha}{2};M\times[0,T]} \|\hat{\nabla}_k\hat{\nabla}_l\varphi  \|_{\alpha,\frac{\alpha}{2};M\times[0,T]} + \| \mathcal{H}(g,f_0,\psi_0)\|_{\alpha,\frac{\alpha}{2};M\times[0,T]}\\[1ex]
			&\hspace{1cm} + \| \mathcal{H}(g, v,\varphi) - \mathcal{H}(g,f_0,\psi_0)\|_{\alpha,\frac{\alpha}{2};M\times[0,T]} + \|\psi_0\|_{2+\alpha;M}\Big)\\[1ex]
			&\leq C_3(\Lambda)\, T^{\frac{1}{2+\alpha}} + C_4(\|g\|_{2+\alpha,1+\frac{\alpha}{2};M\times[0,T]}, \|f_0\|_{2+\alpha;M}, \|\psi_0\|_{2+\alpha;M})  \\[1ex]
			&\hspace{1cm} + C_5(\|g\|_{2+\alpha,1+\frac{\alpha}{2};M\times[0,T]}, \Lambda) \, T^{\frac{1}{2+\alpha}}.
		\end{aligned}
	\end{equation}
	Combining the above estimate with (\ref{estimate:decoupled_g}), we obtain, for sufficiently small $T>0$, 
	\begin{equation}\label{estimate:decoupled_psi}
		\begin{aligned}
			\|\psi\|_{2+\alpha,1+\frac{\alpha}{2};M\times[0,T]} \leq C_6,
		\end{aligned}
	\end{equation}
	where $C_6 = C_6(\|g_0\|_{2+\alpha;M}, \|f_0\|_{2+\alpha;M},\|\psi_0\|_{2+\alpha;M})$. Lastly, having (\ref{eq:decoupled_dg_gauged}) and (\ref{eq:decoupled_dpsi_gauged}) solved, we see that (\ref{eq:decoupled_du_gauged}) is a linear backward parabolic equation, in which a unique solution exists as long as the coefficients are defined.  Applying standard parabolic Schauder estimate to $f(T-t)$, and invoking Lemma \ref{lem:auxiliary_estimate} with (\ref{estimate:decoupled_g}) and (\ref{estimate:decoupled_psi}) we have
	\begin{equation}
		\begin{aligned}
			&\|f\|_{2+\alpha,1+\frac{\alpha}{2};M\times[0,T]}
			\\[1ex]
			 &\leq C \Big( \|\Delta_{g_0} v - \Delta_g v \|_{\alpha,\frac{\alpha}{2};M\times[0,T]} + C(\|g\|_{2+\alpha,1+\frac{\alpha}{2};M\times[0,T]}, \|\psi\|_{2+\alpha,1+\frac{\alpha}{2};M\times[0,T]}, \|f_0\|_{2+\alpha;M}) \Big)\\[1ex]
			 &\leq C_7(\Lambda)\, T^{\frac{1}{2+\alpha}} + C_8 (\|g_0\|_{2+\alpha;M}, \|f_0\|_{2+\alpha;M},\|\psi_0\|_{2+\alpha;M}).
		\end{aligned}
	\end{equation}
Hence	
	\begin{equation}
		\begin{aligned}
			\|f\|_{2+\alpha,1+\frac{\alpha}{2};M\times[0,T]} \leq C(\|g_0\|_{2+\alpha;M}, \|f_0\|_{2+\alpha;M},\|\psi_0\|_{2+\alpha;M}).
		\end{aligned}
	\end{equation}
for $T>0$ sufficiently small. 	This concludes the lemma.
\end{proof}

\bigskip

  Let us choose $\Lambda > K$, where $K$ is the constant given in Lemma \ref{lem:existence_decoupled_system}. Then by Lemma \ref{lem:existence_decoupled_system}, we can define a map
	\[	\mathcal{P}: \mathcal{Y}_T \to\mathcal{Y}_T\]
	as follows: for $(\omega, v,\varphi)\in \mathcal{Y}_T$, define $\mathcal{P}(\omega, v,\varphi) = (g,f,\psi)\in \mathcal{Y}_T$ to be the unique solution of the system (\ref{eq:decoupled_dg_gauged})-(\ref{eq:decoupled_dpsi_gauged}) considered in Lemma \ref{lem:existence_decoupled_system}. Moreover, Lemma \ref{lem:existence_decoupled_system} asserts that $\mathcal{P}(\mathcal{B}_{T,\Lambda}) \subset \mathcal{B}_{T,\Lambda}$ provided $T>0$ sufficiently small.

\medskip
\begin{lemma}
	Let $(\omega_1, v_1,\varphi_1), (\omega_2, v_2,\varphi_2)\in\mathcal{B}_{T,\Lambda}$. Then we have the estimate
	\begin{align*}
		\|\mathcal{P}(\omega_1, v_1,\varphi_1) - \mathcal{P}(\omega_2, v_2,\varphi_2)\|_{\mathcal{Y}_T} \leq C(\Lambda,g_0,f_0,\psi_0) T^{\frac{1}{2+\alpha}} \|(\omega_1, v_1,\varphi_1) - (\omega_2, v_2,\varphi_2)\|_{\mathcal{Y}_T}
	\end{align*}
	for $T>0$ sufficiently small.
\end{lemma}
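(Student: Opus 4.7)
The plan is to set $(g_i,f_i,\psi_i) := \mathcal{P}(\omega_i,v_i,\varphi_i)$ for $i=1,2$, subtract the two gauged systems \eqref{eq:decoupled_dg_gauged}--\eqref{eq:decoupled_dpsi_gauged}, and estimate the differences $\Delta g := g_1-g_2$, $\Delta \psi := \psi_1-\psi_2$, $\Delta f := f_1-f_2$ in the same triangular order used in Lemma \ref{lem:existence_decoupled_system}. The crucial analytic input is the short-time interpolation Lemma \ref{lem:auxiliary_estimate}: because $\Delta g(\cdot,0) = 0$, $\Delta\psi(\cdot,0)=0$, and $\Delta f(\cdot,T)=0$, the $C^{\alpha,\alpha/2}$ norm of each difference on $M\times[0,T]$ is controlled by $T^{1/(2+\alpha)}$ times its $C^{2+\alpha,1+\alpha/2}$ norm, which is the source of the contraction factor.

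First I would estimate $\Delta g$. Subtracting \eqref{eq:decoupled_dg_gauged} for the two triples yields
\[
\partial_t \Delta g - g_0^{kl}\hat\nabla_k\hat\nabla_l \Delta g = \bigl[(\omega_1^{kl}-g_0^{kl})\hat\nabla_k\hat\nabla_l\omega_1 - (\omega_2^{kl}-g_0^{kl})\hat\nabla_k\hat\nabla_l\omega_2\bigr] + \bigl[\mathcal{F}(\omega_1,v_1,\varphi_1) - \mathcal{F}(\omega_2,v_2,\varphi_2)\bigr].
\]
Splitting $a_1 b_1 - a_2 b_2 = (a_1-a_2)b_1 + a_2(b_1-b_2)$ and using the uniform bound $\|(\omega_i,v_i,\varphi_i)\|_{\mathcal{Y}_T}\le\Lambda$, the first bracket is bounded in $C^{\alpha,\alpha/2}$ by $C(\Lambda)\|\Delta\omega\|_{\mathcal{Y}_T}$; the second bracket, which by Lemma \ref{lem:lower_order_terms} contains no top-order derivatives, is Lipschitz in its arguments on $\mathcal{B}_{T,\Lambda}$ by standard product estimates, hence bounded by $C(\Lambda,g_0,\psi_0)\|(\omega_1-\omega_2,v_1-v_2,\varphi_1-\varphi_2)\|_{\mathcal{Y}_T}$. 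Parabolic Schauder estimates, combined with the interpolation lemma applied to $\Delta g$ (zero initial datum), then give
\[
\|\Delta g\|_{2+\alpha,1+\alpha/2;M\times[0,T]} \le C(\Lambda,g_0,\psi_0)\,T^{1/(2+\alpha)}\,\|(\omega_1,v_1,\varphi_1)-(\omega_2,v_2,\varphi_2)\|_{\mathcal{Y}_T}.
\]

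Next for $\Delta\psi$: the principal coefficient on the left-hand side of \eqref{eq:decoupled_dpsi_gauged} is $A(\psi_0)^{kl}$, which is \emph{identical} for the two solutions. Hence $\Delta\psi$ satisfies a linear parabolic equation with fixed principal coefficients and right-hand side $[(A(\varphi_1)^{kl}-A(\psi_0)^{kl})\hat\nabla_k\hat\nabla_l\varphi_1 - (A(\varphi_2)^{kl}-A(\psi_0)^{kl})\hat\nabla_k\hat\nabla_l\varphi_2] + [\mathcal{H}(g_1,v_1,\varphi_1) - \mathcal{H}(g_2,v_2,\varphi_2)]$. The same splitting trick and the Lipschitz continuity of $\mathcal{H}$ on $\mathcal{B}_{T,\Lambda}$ (Lemma \ref{lem:lower_order_terms} guarantees no top-order derivatives appear) bound the $C^{\alpha,\alpha/2}$ norm of the right-hand side by $C(\Lambda,g_0,\psi_0)\bigl(\|(\omega_1-\omega_2,v_1-v_2,\varphi_1-\varphi_2)\|_{\mathcal{Y}_T} + \|\Delta g\|_{\mathcal{Y}_T}\bigr)$. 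Inserting the bound from the previous step, applying Schauder, and using the interpolation lemma (zero initial datum for $\Delta\psi$) yields the same $T^{1/(2+\alpha)}$-type estimate for $\Delta\psi$. Finally, $\Delta f$ solves a linear backward parabolic equation (for $c<\tau$) with zero terminal datum; after reversing time it becomes a forward problem amenable to Schauder and the interpolation lemma, and the right-hand side is Lipschitz in $(g,v,\psi)$ on $\mathcal{B}_{T,\Lambda}$, so the previous bounds on $\Delta g$ and $\Delta\psi$ feed through to give the analogous estimate for $\Delta f$. The case $c>\tau$ is identical except that the $f$-equation is forward parabolic.

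The main obstacle is the verification of the uniform Lipschitz constants for $\mathcal{F},\mathcal{G},\mathcal{H}$ and for the coefficient map $\varphi\mapsto A(\varphi)^{kl}$ on the bounded ball $\mathcal{B}_{T,\Lambda}$. This is essentially bookkeeping: every one of these quantities is built algebraically from $(g,g^{-1},f,\psi)$ and their first derivatives through smooth Clifford multiplication and contraction, so Lipschitz bounds in $C^{\alpha,\alpha/2}$ follow from standard product and composition estimates in parabolic H\"older spaces, provided we know $g$ stays uniformly positive-definite over $M\times[0,T]$. The latter is ensured by the initial condition $g(\cdot,0)=g_0$ and the interpolation lemma applied to $g-g_0$ for $T$ small, so $g^{-1}$ is well-defined and bounded on $\mathcal{B}_{T,\Lambda}$ for small $T$. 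Once the contraction estimate above is in hand, choosing $T$ so small that $C(\Lambda,g_0,f_0,\psi_0)T^{1/(2+\alpha)}<1$ makes $\mathcal{P}$ a contraction on the closed set $\mathcal{B}_{T,\Lambda}$, and the Banach fixed-point theorem delivers the unique solution of the gauged system, proving Theorem \ref{thm;short-time-existence}.
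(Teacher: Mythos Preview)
Your triangular strategy (first $\Delta g$, then $\Delta\psi$, then $\Delta f$) matches the paper's, but you have misidentified the source of the $T^{1/(2+\alpha)}$ contraction factor, and as written your argument does not produce it. You state that the factor comes from applying Lemma~\ref{lem:auxiliary_estimate} to the \emph{output} differences $\Delta g,\Delta\psi,\Delta f$ (since they vanish at $t=0$ or $t=T$). But that lemma only bounds $\|\Delta g\|_{\alpha,\alpha/2}$ by $T^{1/(2+\alpha)}\|\Delta g\|_{2+\alpha,1+\alpha/2}$, which is the wrong direction: the contraction is stated in the $\mathcal{Y}_T=C^{2+\alpha,1+\alpha/2}$ norm, and Schauder controls that norm only via the $C^{\alpha,\alpha/2}$ norm of the \emph{right-hand side}. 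Your bounds on the right-hand side (``bounded in $C^{\alpha,\alpha/2}$ by $C(\Lambda)\|\Delta\omega\|_{\mathcal{Y}_T}$'', etc.) carry no small factor, so after Schauder you only get $\|\Delta g\|_{\mathcal{Y}_T}\le C(\Lambda)\|\cdots\|_{\mathcal{Y}_T}$, which is not a contraction; applying interpolation to $\Delta g$ afterwards cannot improve this.

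In the paper the small factor comes from applying Lemma~\ref{lem:auxiliary_estimate} to the \emph{inputs}. In the splitting
\[
(\omega_1^{kl}-g_0^{kl})\,\hat\nabla_k\hat\nabla_l(\omega_1-\omega_2)
+(\omega_1^{kl}-\omega_2^{kl})\,\hat\nabla_k\hat\nabla_l\omega_2,
\]
the low-order coefficients $\omega_1-g_0$ and $\omega_1-\omega_2$ vanish at $t=0$ because $(\omega_i,v_i,\varphi_i)\in\mathcal{B}_{T,\Lambda}$, so it is \emph{their} $C^{\alpha,\alpha/2}$ norms that carry the $T^{1/(2+\alpha)}$; the second-order factors are then bounded crudely by $\Lambda$ or by $\|\omega_1-\omega_2\|_{\mathcal{Y}_T}$. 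Likewise $\mathcal{F}$ (and the first-order parts of $\mathcal{G},\mathcal{H}$) depend only on first derivatives of their arguments, and since the input differences vanish at $t=0$ (for $\omega,\varphi$) or at $t=T$ (for $v$), Lemma~\ref{lem:auxiliary_estimate} yields the small factor directly on the source terms. This is how \eqref{estimate:delta_g}, \eqref{estimate:delta_g_and_psi}, and \eqref{estimate:delta_u} are obtained; the vanishing of $\Delta g,\Delta\psi,\Delta f$ at the endpoints plays no role in the argument.
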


\begin{proof}
	Let us denote $(g_i, f_i,\psi_i) = \mathcal{P}(\omega_i,v_i,\varphi_i)$ for $i=1,2$, and let $\delta g = g_1-g_2$, $\delta f = f_1-f_2$ and $\delta\psi = \psi_1-\psi_2$. 	 Then $(\delta g, \delta f, \delta\psi)$ solves the PDE system
	
		\begin{align}
			\label{eq:decoupled_delta_g_gauged}\D{\delta g}{t} - g_0^{kl}\hat{\nabla}_k\hat{\nabla}_l \delta g & = (\omega_1^{kl}-g_0^{kl})\hat{\nabla}_k\hat{\nabla}_l(\omega_1 - \omega_2) + (\omega_1^{kl}-\omega_2^{kl})\hat{\nabla}_k\hat{\nabla}_l\omega_2\\
			&\notag\quad  + \mathcal{F}(\omega_1, v_1,\varphi_1) - \mathcal{F}(\omega_2, v_2,\varphi_2)\\[1ex]
			\label{eq:decoupled_delta_u_gauged}\D{\delta f}{t}+ \left( 1 - \frac{|\psi_0|^2}{\tau}\right)\Delta_{g_0} \delta f & = -\left( 1 - \frac{|\psi_0|^2}{\tau}\right)(\Delta_{g_1}  - \Delta_{g_0} )(v_1 - v_2)    -\left( 1 - \frac{|\psi_0|^2}{\tau}\right)(\Delta_{g_1}  - \Delta_{g_2} ) v_2 \\
			&\notag\quad + \mathcal{G}(g_1,f_1,\psi_1) -  \mathcal{G}(g_2,f_2,\psi_2)\\[1ex]
			\label{eq:decoupled_delta_psi_gauged}\D{\delta \psi}{t} - A(\psi_0)^{kl}\hat{\nabla}_k\hat{\nabla}_l\delta \psi & = (A(\varphi_1)^{kl} - A(\psi_0)^{kl})\hat{\nabla}_k\hat{\nabla}_l (\varphi_1 - \varphi_2) + (A(\varphi_1)^{kl} - A(\varphi_2)^{kl})\hat{\nabla}_k\hat{\nabla}_l \varphi_2\\
			&\notag\quad  + \mathcal{H}(g_1, v_1,\varphi_1) -  \mathcal{H}(g_2,v_2,\varphi_2)\\[1ex] 
			\notag \delta g(\cdot,0) &= 0\\[1ex]
			\notag \delta \psi(\cdot,0) &= 0\\[1ex]
			\notag \delta f(\cdot,T) &= 0.
	\end{align}
 Applying parabolic Schauder esimtate to (\ref{eq:decoupled_delta_g_gauged}), we get

\begin{equation}
	\begin{aligned}
		&\|\delta g\|_{2+\alpha,1+\frac{\alpha}{2};M\times[0,T]}\\[1ex]
		& \leq C(\Lambda) \Big(  \|\omega_1-g_0\|_{\alpha,\frac{\alpha}{2};M\times[0,T]}\|\omega_1 - \omega_2\|_{2+\alpha,1+\frac{\alpha}{2};M\times[0,T]} + \|\omega_1 - \omega_2\|_{\alpha,\frac{\alpha}{2};M\times[0,T]}\|\omega_2\|_{2+\alpha,1+\frac{\alpha}{2};M\times[0,T]}\\[1ex]
		&\hspace{1.5cm} + \|\mathcal{F}(\omega_1, v_1,\varphi_1) - \mathcal{F}(\omega_2, v_2,\varphi_2)\|_{\alpha,\frac{\alpha}{2};M\times[0,T]} \Big).
	\end{aligned}
\end{equation}
By noting that $\omega_i(\cdot,0) = g_0$, we can apply  Lemma \ref{lem:auxiliary_estimate} to the above Schauder estimate, we subsequently obtain

\begin{equation}\label{estimate:delta_g}
	\begin{aligned}
		\|\delta g\|_{2+\alpha,1+\frac{\alpha}{2};M\times[0,T]} \leq C(\Lambda) T^{\frac{1}{2+\alpha}}  \|(\omega_1,v_1,\varphi_1) - (\omega_2,v_2,\varphi_2)\|_{\mathcal{Y}_T} .
	\end{aligned}
\end{equation}
\medskip

Next, we note that $\mathcal{H}(g, v,\varphi)$ depends on top order of $g$, but only up to first order of $v,\varphi$. Similarly, we can apply parabolic Schauder estimate with Lemma \ref{lem:auxiliary_estimate} to (\ref{eq:decoupled_delta_psi_gauged}), we obtain

\begin{equation}\label{estimate:delta_psi}
	\begin{aligned}
		&\|\delta \psi\|_{2+\alpha,1+\frac{\alpha}{2};M\times[0,T]} \\[1ex]
		&\leq C(\Lambda) 	\|\delta g\|_{2+\alpha,1+\frac{\alpha}{2};M\times[0,T]} + C(\Lambda) T^{\frac{1}{2+\alpha}}\|(\omega_1,v_1,\varphi_1) - (\omega_2,v_2,\varphi_2)\|_{\mathcal{Y}_T}.
	\end{aligned}
\end{equation}
Here the extra term of $\|g_1 - g_2\|_{2+\alpha,1+\frac{\alpha}{2};M\times[0,T]}$ in the RHS of (\ref{estimate:delta_psi}) comes from the fact that $\mathcal{H}(g,v,\varphi)$ depends on second order in $g$. Putting (\ref{estimate:delta_g}) and (\ref{estimate:delta_psi}) together, we obtain

\begin{equation}\label{estimate:delta_g_and_psi}
	\begin{aligned}
	&\|\delta g\|_{2+\alpha,1+\frac{\alpha}{2};M\times[0,T]} + \|\delta \psi\|_{2+\alpha,1+\frac{\alpha}{2};M\times[0,T]}\leq C(\Lambda) T^{\frac{1}{2+\alpha}}\|(\omega_1,v_1,\varphi_1) - (\omega_2,v_2,\varphi_2)\|_{\mathcal{Y}_T}.\\[1ex]
	\end{aligned}
\end{equation}

Lastly, we apply parabolic Schauder estimate with Lemma \ref{lem:auxiliary_estimate} to (\ref{eq:decoupled_delta_u_gauged}), and noting that $\mathcal{G}(g,v,\varphi)$ depends on top order of $g,\varphi$, but only up to first order of $v$. We thus obtain

\begin{equation}\label{estimate:delta_u}
	\begin{aligned}
		&\|\delta u\|_{2+\alpha,1+\frac{\alpha}{2};M\times[0,T]} \\[1ex]
		&\leq C(\Lambda) 	\Big( \|\delta g\|_{2+\alpha,1+\frac{\alpha}{2};M\times[0,T]} + \|\delta \psi\|_{2+\alpha,1+\frac{\alpha}{2};M\times[0,T]}\Big) + C(\Lambda) T^{\frac{1}{2+\alpha}}\|(\omega_1,v_1,\varphi_1) - (\omega_2,v_2,\varphi_2)\|_{\mathcal{Y}_T}\\[1ex]
		&\leq C(\Lambda) T^{\frac{1}{2+\alpha}}\|(\omega_1,v_1,\varphi_1) - (\omega_2,v_2,\varphi_2)\|_{\mathcal{Y}_T}.
	\end{aligned}
\end{equation}
This proves the lemma.

\end{proof}
From this Lemma we obtain

\begin{corollary}
	The map $\mathcal{P} :\mathcal{B}_{T,\Lambda} \to \mathcal{B}_{T,\Lambda}$ is a contraction mapping  provided $T>0$ is sufficiently small.
\end{corollary}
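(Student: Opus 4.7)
The plan is essentially to combine the two preceding lemmas and choose the time horizon $T > 0$ small enough, with the Banach fixed point theorem doing the rest. First I would note that Lemma \ref{lem:existence_decoupled_system}, together with our choice $\Lambda > K$, already guarantees that $\mathcal{P}$ is a well-defined self-map of the closed subset $\mathcal{B}_{T,\Lambda}$ of the parabolic H\"older space $\mathcal{Y}_T$ for all sufficiently small $T > 0$. Closedness of $\mathcal{B}_{T,\Lambda}$ in $\mathcal{Y}_T$ is immediate from the affine boundary conditions $g(\cdot,0) = g_0$, $\psi(\cdot,0) = \psi_0$, $f(\cdot,T) = f_0$ and the closed norm ball constraint $\|(g,f,\psi)\|_{\mathcal{Y}_T} \leq \Lambda$, so the Banach fixed point setup is available.

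Next I would invoke the Lipschitz estimate from the preceding lemma: there is a constant $C_\star = C(\Lambda, g_0, f_0, \psi_0)$ such that for all $(\omega_i, v_i, \varphi_i) \in \mathcal{B}_{T,\Lambda}$,
\[
\|\mathcal{P}(\omega_1,v_1,\varphi_1) - \mathcal{P}(\omega_2,v_2,\varphi_2)\|_{\mathcal{Y}_T} \leq C_\star\, T^{\frac{1}{2+\alpha}}\, \|(\omega_1,v_1,\varphi_1) - (\omega_2,v_2,\varphi_2)\|_{\mathcal{Y}_T}.
\]
Choose $T_0 > 0$ small enough that $C_\star T_0^{\frac{1}{2+\alpha}} \leq \tfrac{1}{2}$ and, simultaneously, small enough that the self-map conclusion $\mathcal{P}(\mathcal{B}_{T,\Lambda}) \subset \mathcal{B}_{T,\Lambda}$ of Lemma \ref{lem:existence_decoupled_system} remains valid. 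Both constraints are upper bounds on $T$ depending only on $(\Lambda, g_0, f_0, \psi_0)$, so they can be imposed simultaneously. For every $T \in (0, T_0]$ the map $\mathcal{P}$ is then a $\tfrac{1}{2}$-contraction on $\mathcal{B}_{T,\Lambda}$, which proves the corollary.

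There is no genuine obstacle remaining at this stage, since the analytic heavy lifting has already been carried out in the two preceding lemmas: parabolic Schauder estimates applied to the triangular structure of the gauged system \eqref{eq:decoupled_dg_gauged}--\eqref{eq:decoupled_dpsi_gauged}, together with the crucial gain of the small factor $T^{\frac{1}{2+\alpha}}$ via the short-time interpolation estimate of Lemma \ref{lem:auxiliary_estimate}. The only mildly delicate point is the joint choice of $T$, which is the standard trick of selecting the smaller of the two admissible time thresholds. The corollary then supplies the fixed point input needed to complete Theorem \ref{thm;short-time-existence}: the unique fixed point solves the gauged system \eqref{eq:coupled_dg_gauged}--\eqref{eq:coupled_dpsi_gauged}, and after undoing the DeTurck gauge by pulling back along the time-dependent diffeomorphisms generated by $X = W(g,g_0) - \tfrac{2}{\tau}V_f + \nabla f$ and correcting the spinor transport via the Kosmann Lie derivative, one recovers the unique short-time solution of the original spinorial Ricci flow \eqref{eq:coupled_dg}--\eqref{eq:coupled_dpsi}.
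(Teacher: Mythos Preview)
Your proposal is correct and matches the paper's approach: the paper states the corollary as an immediate consequence of the preceding Lipschitz estimate (``From this Lemma we obtain'') without giving a separate proof, and your argument supplies exactly the expected details of choosing $T$ small enough to simultaneously ensure the self-map property and the contraction factor $C_\star T^{\frac{1}{2+\alpha}} \le \tfrac12$.
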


\medskip

\begin{theorem}
	For $T = T(g_0, f_0,\psi_0)>0$ sufficiently small, the system (\ref{eq:coupled_dg})-(\ref{eq:coupled_dpsi}) has a solution.
\end{theorem}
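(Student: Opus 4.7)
The plan is to complete the DeTurck--trick strategy already set up in the preceding lemmas. Since the map $\mathcal{P}:\mathcal{B}_{T,\Lambda}\to\mathcal{B}_{T,\Lambda}$ is a contraction for $T$ sufficiently small by the previous corollary, the Banach fixed point theorem produces a unique $(g,f,\psi)\in\mathcal{B}_{T,\Lambda}$ with $\mathcal{P}(g,f,\psi)=(g,f,\psi)$. Unwinding the definition of $\mathcal{P}$, this triple is a classical $C^{2+\alpha,1+\frac{\alpha}{2}}$ solution of the gauged system \eqref{eq:coupled_dg_gauged}--\eqref{eq:coupled_dpsi_gauged} on $M\times[0,T]$ with $g(\cdot,0)=g_0$, $\psi(\cdot,0)=\psi_0$ (and, in the backward case $c<\tau$, the terminal condition $f(\cdot,T)=f_0$). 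By \eqref{eqn;spinor_norm_preserved_under_gauge}, the pointwise normalization $|\psi(t)|^2\equiv c$ is preserved along the gauged flow, which keeps the coefficient $A(\psi)^{kl}$ in the $\psi$-equation and the principal coefficient $1-|\psi|^2/\tau$ of the $f$-equation in the same uniformly (forward or backward) parabolic regime for all $t\in[0,T]$.

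Next I would undo the DeTurck gauge. Set $X(t):=W(g(t),g_0)-\frac{2}{\tau}V_{f(t)}+\nabla f(t)$, which is $C^{1+\alpha,\frac{1+\alpha}{2}}$ in space-time by membership in $\mathcal{B}_{T,\Lambda}$. Define the time-dependent family of diffeomorphisms $\Phi_t:M\to M$ by the ODE $\partial_t\Phi_t=-X(t)\circ\Phi_t$ with $\Phi_0=\mathrm{id}_M$, which has a unique $C^{2+\alpha}$ solution on $[0,T]$; because $\Phi_0$ is the identity, each $\Phi_t$ lifts canonically to a smooth family of spin diffeomorphisms $\hat\Phi_t$. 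Put
\begin{equation*}
\tilde g(t):=\Phi_t^*g(t),\qquad \tilde f(t):=f(t)\circ\Phi_t,\qquad \tilde\psi(t):=\hat\Phi_t^*\psi(t).
\end{equation*}
Differentiating in $t$ and using the standard identities $\partial_t(\Phi_t^*g)=\Phi_t^*(\partial_t g-\mathcal{L}_X g)$, $\partial_t(f\circ\Phi_t)=\Phi_t^*(\partial_t f-Xf)$, and the spin analogue $\partial_t(\hat\Phi_t^*\psi)=\hat\Phi_t^*(\partial_t\psi-\mathcal{L}^{\text{spin}}_X\psi)$, every gauge term added in \eqref{eq:coupled_dg_gauged}--\eqref{eq:coupled_dpsi_gauged} is exactly cancelled, so $(\tilde g,\tilde f,\tilde\psi)$ satisfies \eqref{eq:coupled_dg}--\eqref{eq:coupled_dpsi} with the prescribed initial data. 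The pointwise constraint $|\tilde\psi|_{\tilde g}^2\equiv c$ is automatic because pullback preserves spinor length.

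For uniqueness in Theorem~\ref{thm;short-time-existence}, I would run the argument in reverse: given any two classical solutions of the original system in this regularity class with the same initial data, conjugating each by the diffeomorphism family generated by the corresponding $X$ yields two fixed points of $\mathcal{P}$ inside $\mathcal{B}_{T,\Lambda}$, which must coincide by the contraction property; inverting the gauge transformations then forces the two original solutions to agree on $[0,T]$.

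The main obstacle I anticipate is the spin analogue of the diffeomorphism--pullback calculation: one must verify that the Kosmann derivative $\mathcal{L}^{\text{spin}}_X\psi$ is indeed the infinitesimal generator of $\hat\Phi_t^*\psi$, so that the correction term $\mathcal{L}^{\text{spin}}_{W-\frac{2}{\tau}U}\psi$ inserted into \eqref{eq:coupled_dpsi_gauged} is precisely the defect between $\partial_t(\hat\Phi_t^*\psi)$ and $\hat\Phi_t^*(\partial_t\psi)$. This is exactly why the Kosmann derivative was inserted in the gauge fix in the first place, and it follows from naturality of the spin connection under spin diffeomorphisms; the remaining regularity bookkeeping (continuity of the spin lift in $t$, preservation of H\"older norms under pullback by $\Phi_t$) is standard.
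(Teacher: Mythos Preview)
Your proposal is correct and follows essentially the same approach as the paper: obtain the fixed point of $\mathcal{P}$ as a solution of the gauged system, note that $|\psi|^2\equiv c$ is preserved, then undo the gauge by pulling back along the diffeomorphisms generated by $X=W(g,g_0)-\tfrac{2}{\tau}V_f+\nabla f$ via $\partial_t\Phi_t=-X\circ\Phi_t$, $\Phi_0=\mathrm{id}$. Your treatment is in fact more detailed than the paper's, which only verifies the $f$-equation explicitly and does not spell out the spin-lift or uniqueness arguments; note also that the theorem as stated asserts only existence, so your uniqueness paragraph is extra (though correct in spirit for Theorem~\ref{thm;short-time-existence}).
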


\begin{proof}
	Let $(g,f,\psi)\in \mathcal{B}_{T,\Lambda}$ be the fixed point of $\mathcal{P}$. Then from the definition of $\mathcal{P}$, they solves the PDE system
	\begin{equation}
		\begin{aligned}
			\D{g}{t}  & =  g^{kl}\hat{\nabla}_k\hat{\nabla}_lg + \mathcal{F}(g,f,\psi)\\[1ex]
			\D{f}{t} & = -\left( 1 - \frac{|\psi_0|^2}{\tau}\right)\Delta_{g}f+   \mathcal{G}(g, f,\psi)\\[1ex]
			\D{\psi}{t}  & = A(\psi)^{kl}\hat{\nabla}_k\hat{\nabla}_l\psi + \mathcal{H}(g, f,\psi)\\[1ex]
			\notag g(\cdot,0) &= g_0\\[1ex]
			\notag \psi(\cdot,0) &= \psi_0\\[1ex]
			\notag f(\cdot,T) &= f_0,
		\end{aligned}
	\end{equation}
By the definition of $\mathcal{F},\mathcal{G},\mathcal{H}$ in \ref{def:lower_order_terms}, we see that $(g,f,\psi)$ is a solution to the system (\ref{eq:coupled_dg_gauged})-(\ref{eq:coupled_dpsi_gauged}). This is because $|\psi|^2 = |\psi_0|^2$ is preserved by (\ref{eq:coupled_dpsi_gauged}). Lastly, let us define a vector field
	\[	X = W(g,g_0) - \frac{2}{\tau}V_f + \nabla f,\]
	and let $\Phi_t:M\to M$ be the one-parameter family of diffeomorphisms generated by
	\begin{align*}
		\begin{cases}
			&\frac{\partial}{\partial t}\Phi_t = -X\circ\Phi_t\\
			&\Phi_0 = \id
		\end{cases}.
	\end{align*}
	Then $(\tilde{g},\tilde{f},\tilde{\psi}) = (\Phi_t^*g, \Phi_t^*f, \Phi_t^*\psi)$ is a solution of (\ref{eq:coupled_dg})-(\ref{eq:coupled_dpsi}). For example, as $V_f = U + \frac{|\psi|^2}{2}\nabla f$, so $\div(V_f) = \div\, U + \frac{|\psi|^2}{2}\Delta f$, and 
	
	\begin{align*}
		\D{\tilde{f}}{t} &= \Phi_t^* \D{f}{t} - \langle\nabla \tilde{f}, X\circ\Phi_t\rangle\\
		& =  \Phi_t^*\Big( -\Delta f - R + \frac{\lambda n}{2\tau} +\frac{4}{\tau} \Tr_g S_{g,f,\psi} + \frac{2}{\tau}\div\, V_f  + \langle \nabla f, W - \frac{2}{\tau}V_f + \nabla f\rangle\Big)\\
		&\quad - \langle\nabla \tilde{f}, (W - \frac{2}{\tau}V_f + \nabla f)\circ\Phi_t\rangle\\
		&=\Phi_t^*\Big( -\Delta f - R + \frac{\lambda n}{2\tau} +\frac{4}{\tau} \Tr_g S_{g,f,\psi} + \frac{2}{\tau}\div\, V_f\Big).
	\end{align*}
\end{proof}

\medskip

\begin{remark}
The short-time existence result provides the analytic starting point for studying the long-time
behavior of the spinorial Ricci flow as a dynamical system on the constraint space
$\{(g,f,\psi): \int_M d\Omega_{g,f,\tau}=1,\ |\psi|^2\equiv c\}$ modulo diffeomorphisms. In the forward
parabolic regime $c>\tau$, the DeTurck-gauged system is a genuine forward quasilinear parabolic flow in
all variables, and the dissipation identity \eqref{eq:monotonicity_A} makes $\mathbb{W}_\lambda$ a strict Lyapunov
functional. This suggests the possibility of applying tools from analytic gradient-flow theory
(e.g.\ {\L}ojasiewicz--Simon inequalities) to obtain stability and convergence to critical points
(twisted solitons/eigenspinors), once suitable compactness and gauge-fixing inputs are available.

It is also natural to view the classical Ricci flow as an invariant subsystem: when the initial spinor is
$f$-harmonic, the spinorial terms vanish and the metric evolution reduces (up to diffeomorphisms) to the
Ricci flow. Thus one may ask whether the $f$-harmonic locus is dynamically stable inside the larger
spinorial flow, and whether $\mathbb{W}_\lambda$ can be used to extract additional monotonicity or rigidity
information in this setting. Finally, in the mixed regime $c<\tau$ the scalar equation becomes backward
parabolic after gauging, reminiscent of the adjoint structure in Perelman's coupled system; it would be
interesting to understand how this regime can be suited for entropy/singularity applications in
which $f$ plays the role of an adjoint variable rather than an evolving state.
\end{remark}
\medskip

\begin{remark}
Theorem~\ref{thm;short-time-existence} yields a well-posed local evolution for the spinorial Ricci flow,
and hence (by continuation) a maximal-time solution in the usual quasilinear-parabolic sense. The
distinction between $c<\tau$ and $c>\tau$ is not merely technical: in the regime $c>\tau$ the DeTurck-gauged
system is a \emph{fully forward} quasilinear parabolic flow in $(g,f,\psi)$, while $\mathbb{W}_\lambda$ is a strict
Lyapunov functional by the dissipation identity \eqref{eq:monotonicity_A}. In particular, for any global
solution with $\mathbb{W}_\lambda$ bounded from below, integrating \eqref{eq:monotonicity_A} shows that the squared
$L^2(d\Omega)$-norm of the constrained gradient is integrable in time, and hence there exist times
$t_j\to\infty$ along which the flow approaches the critical set (twisted solitons/eigenspinors) in the
sense that the $L^2$-gradient tends to zero. A natural next step is to understand when such
subsequential convergence improves to full convergence modulo diffeomorphisms; in the forward-parabolic
setting one may hope to combine compactness/regularity inputs with a {\L}ojasiewicz--Simon inequality
for $\mathbb{W}_\lambda$ near an isolated critical point, as is standard for analytic geometric gradient flows.
\end{remark}

\bigskip

\appendix

\section{Auxiliary estimate}

	\begin{lemma}\label{lem:auxiliary_estimate}
		Suppose that $E\to M$ is a vector bundle and $F:E\oplus (T^*M\otimes E)\to\R$ is a fibrewise $C^2$ map. Suppose that $\omega_1, \omega_2\in C^{2+\alpha, 1+\frac{\alpha}{2}}(M\times [0,T];E)$ with $\omega_1(\cdot,0) = \omega_2(\cdot,0)$, or $\omega_1(\cdot,T) = \omega_2(\cdot,T)$. Then		
		\begin{align*}
			&\|F(\omega_1,\partial\omega_1) - F(\omega_2,\partial\omega_2)\|_{\alpha,\frac{\alpha}{2};M\times[0,T]}\\
			& \leq C(F,  \|\omega_i\|_{2+\alpha, 1+\frac{\alpha}{2}; M\times [0,T]})\, T^{\frac{1}{2+\alpha}}\, \|\omega_1-\omega_2\|_{2+\alpha, 1+\frac{\alpha}{2}; M\times [0,T]}
		\end{align*}
		for small $T>0$. Moreover, if $F(0,0) = 0$, then		
		\begin{align*}
			\|F(\omega,\partial\omega)\|_{\alpha,\frac{\alpha}{2};M\times[0,T]} \leq C(F,  \|\omega\|_{2+\alpha, 1+\frac{\alpha}{2}; M\times [0,T]})\, T^{\frac{1}{2+\alpha}}
		\end{align*}
		for $\omega\in C^{2+\alpha, 1+\frac{\alpha}{2}}(M\times [0,T];E)$ satisfying $\omega(\cdot,0) = 0$, or $\omega(\cdot,T) = 0$.
	\end{lemma}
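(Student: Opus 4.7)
The plan is to reduce both statements to a single parabolic-Hölder interpolation applied to the difference $v:=\omega_1-\omega_2$ (or to $\omega$ itself in the second statement), combined with the fact that a $C^2$ Nemytskii-type map acts Lipschitz-continuously on bounded sets of parabolic Hölder spaces. Throughout, I abbreviate the parabolic Hölder norms by $\|\cdot\|_{k+\alpha}:=\|\cdot\|_{C^{k+\alpha,(k+\alpha)/2}(M\times[0,T])}$ for $k=0,1,2$; by compactness of $M$ and smoothness of the vector bundle $E$, one can work in fixed local trivializations with a finite atlas so that all estimates reduce to the standard Euclidean parabolic Hölder calculus.

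First I would establish the key interpolation lemma: for any $v\in C^{2+\alpha,1+\alpha/2}(M\times[0,T];E)$ with either $v(\cdot,0)=0$ or $v(\cdot,T)=0$,
\begin{equation*}
\|v\|_{1+\alpha}\ \le\ C\,T^{1/(2+\alpha)}\,\|v\|_{2+\alpha}.
\end{equation*}
This follows in two steps. Step (a): the standard parabolic Hölder interpolation inequality
\[
\|v\|_{1+\alpha}\ \le\ C\,\|v\|_{L^\infty}^{1/(2+\alpha)}\,\|v\|_{2+\alpha}^{(1+\alpha)/(2+\alpha)},
\]
which is obtained by the classical interpolation between $L^\infty$ and $C^{2+\alpha,1+\alpha/2}$ (see, e.g., Ladyzhenskaya--Solonnikov--Ural'tseva, or Lieberman). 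Step (b): when $v$ vanishes at $t=0$ (the case $t=T$ is symmetric), the fundamental theorem of calculus in $t$ gives $\|v\|_{L^\infty}\le T\,\|\partial_t v\|_{L^\infty}\le T\,\|v\|_{2+\alpha}$. Plugging into (a) produces the claimed factor $T^{1/(2+\alpha)}$.

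Next I would prove the Lipschitz bound for the Nemytskii operator: for any $R>0$ and $\omega_1,\omega_2\in C^{2+\alpha,1+\alpha/2}$ with $\|\omega_i\|_{2+\alpha}\le R$,
\[
\|F(\omega_1,\partial\omega_1)-F(\omega_2,\partial\omega_2)\|_{\alpha}\ \le\ C(F,R)\,\|\omega_1-\omega_2\|_{1+\alpha}.
\]
This is proved by writing
\[
F(\omega_1,\partial\omega_1)-F(\omega_2,\partial\omega_2)=\int_0^1 \bigl[\partial_\omega F\,(v)+\partial_p F\,(\partial v)\bigr]\big|_{(\omega_s,\partial\omega_s)}\,ds,\qquad \omega_s:=s\omega_1+(1-s)\omega_2,
\]
using $v=\omega_1-\omega_2$, applying the Hölder product rule $\|gh\|_\alpha\le \|g\|_\alpha\|h\|_\infty+\|g\|_\infty\|h\|_\alpha$, and noting that $\partial_\omega F(\omega_s,\partial\omega_s)$ and $\partial_p F(\omega_s,\partial\omega_s)$ have $C^{\alpha,\alpha/2}$ norms controlled by $C(F,R)$ because $F\in C^2$ and the arguments $\omega_s,\partial\omega_s$ are bounded in $C^{1+\alpha,(1+\alpha)/2}$.

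Combining the two estimates gives the first statement: $\omega_1(\cdot,0)=\omega_2(\cdot,0)$ (or at $t=T$) ensures $v$ vanishes at the relevant boundary, so the interpolation lemma yields $\|v\|_{1+\alpha}\le CT^{1/(2+\alpha)}\|v\|_{2+\alpha}$, and the Nemytskii bound converts this to the desired $C^{\alpha,\alpha/2}$ estimate on $F(\omega_1,\partial\omega_1)-F(\omega_2,\partial\omega_2)$. The second statement then follows by taking $\omega_2\equiv 0$ and using $F(0,0)=0$ (so that the map $(\omega,p)\mapsto F(\omega,p)$ may be written as $\int_0^1 dF_{(s\omega,sp)}(\omega,p)\,ds$, producing the same structural estimate). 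The main (minor) obstacle is the precise statement of the interpolation inequality in parabolic Hölder spaces with an explicit exponent $1/(2+\alpha)$; this is standard but one must be careful that the parabolic scaling is respected — weighting time with exponent $1/2$ relative to space — and that the boundary condition ($v=0$ at $t=0$ or $t=T$) is genuinely used to convert the $L^\infty$ control into a $T$-small quantity rather than a mere boundedness statement.
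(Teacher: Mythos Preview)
Your proposal is correct and follows essentially the same approach as the paper's own proof: a mean-value/Nemytskii Lipschitz bound reducing the left-hand side to $\|\omega_1-\omega_2\|_{1+\alpha}$, followed by parabolic H\"older interpolation between $L^\infty$ and $C^{2+\alpha,1+\alpha/2}$, and finally the fundamental theorem of calculus in $t$ to convert the vanishing initial (or terminal) condition into the factor $T^{1/(2+\alpha)}$. The only cosmetic difference is that the paper records the three interpolation bounds for $[u]_{\alpha,\alpha/2}$, $|\partial u|_0$, and $[\partial u]_{\alpha,\alpha/2}$ separately (citing Krylov) and then takes the minimum exponent, whereas you package these into the single inequality $\|v\|_{1+\alpha}\le C\|v\|_{L^\infty}^{1/(2+\alpha)}\|v\|_{2+\alpha}^{(1+\alpha)/(2+\alpha)}$; the content is the same.
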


\begin{proof}
	Using the mean value theorem, we have
	\begin{equation}\label{appendix:eq1}
		\begin{aligned}
		&\|F(\omega_1,\partial\omega_1) - F(\omega_2,\partial\omega_2)\|_{\alpha,\frac{\alpha}{2};M\times[0,T]} \\
		&\leq C(F,  \|\omega_i\|_{2+\alpha, 1+\frac{\alpha}{2}; M\times [0,T]})\, \Big( \|\omega_1-\omega_2\|_{\alpha,\frac{\alpha}{2};M\times[0,T]} + \|\partial\omega_1- \partial \omega_2\|_{\alpha,\frac{\alpha}{2};M\times[0,T]} \Big).
		\end{aligned}
	\end{equation}
	Let us take $u = \omega_1 - \omega_2$, then $u\in C^{2+\alpha, 1+\frac{\alpha}{2}}(M\times [0,T];E)$ and the assumption implies $u(\cdot,0) = 0$. Using the parabolic H\"older interpolation inequalities (c.f. \cite[Chapter 8.8]{Krylov1996}), we have 
	\begin{equation}\label{appendix:eq2}
		\begin{aligned}
			 \,[u]_{\alpha,\frac{\alpha}{2};M\times[0,T]}  &\leq C \|u\|_{2+\alpha, 1+\frac{\alpha}{2}; M\times [0,T]}^{\frac{\alpha}{2+\alpha}}\, |u|_{0; M\times [0,T]}^{1-\frac{\alpha}{2+\alpha}},\\[1ex]
			|\partial u|_{0; M\times [0,T]}  &\leq C \|u\|_{2+\alpha, 1+\frac{\alpha}{2}; M\times [0,T]}^{\frac{1}{2+\alpha}}\, |u|_{0; M\times [0,T]}^{1-\frac{1}{2+\alpha}},\\[1ex]
			[\partial u]_{\alpha,\frac{\alpha}{2};M\times[0,T]}  &\leq C \|u\|_{2+\alpha, 1+\frac{\alpha}{2}; M\times [0,T]}^{\frac{1+\alpha}{2+\alpha}}\, |u|_{0; M\times [0,T]}^{1-\frac{1+\alpha}{2+\alpha}}.
		\end{aligned}
	\end{equation}
	On the other hand, using the fact that $u(\cdot,0) = 0$, we have
	\[	|u(x,t)| \leq \int_0^t |u_t(x,s)| ds \leq t\, |u_t|_{0;M\times[0,T]}\leq T\,\|u\|_{2+\alpha, 1+\frac{\alpha}{2}; M\times [0,T]} \]
	for all $(x,t)\in M\times [0,T]$. This gives
	\begin{equation}\label{appendix:eq3}
		|u|_{0; M\times [0,T]} \leq T\,\|u\|_{2+\alpha, 1+\frac{\alpha}{2}; M\times [0,T]}.
	\end{equation}
	The above estimate holds similarly if we have $u(\cdot,T) = 0$ instead. Lastly, putting (\ref{appendix:eq2}) and   (\ref{appendix:eq3}) into (\ref{appendix:eq1}), we obtain 
	\begin{align*}
		&\|F(\omega_1,\partial\omega_1) - F(\omega_2,\partial\omega_2)\|_{\alpha,\frac{\alpha}{2};M\times[0,T]}\\[1ex]
		 &\leq  C(F,  \|\omega_i\|_{2+\alpha, 1+\frac{\alpha}{2}; M\times [0,T]})\, T^{\min\{1-\frac{\alpha}{2+\alpha}, 1-\frac{1}{2+\alpha}, 1-\frac{1+\alpha}{2+\alpha}, 1 \}}\, \|u\|_{2+\alpha, 1+\frac{\alpha}{2}; M\times [0,T]}\\[1ex]
		&\leq C(F,  \|\omega_i\|_{2+\alpha, 1+\frac{\alpha}{2}; M\times [0,T]})\, T^{\frac{1}{2+\alpha}}\, \|\omega_1-\omega_2\|_{2+\alpha, 1+\frac{\alpha}{2}; M\times [0,T]}
	\end{align*}
	for small $T>0$.
\end{proof}

\bibliographystyle{plain}  % or alpha, abbrv, unsrt, etc.
\bibliography{citations.bib}  % assumes references.bib

\end{document}